\newcommand{\Sp}{\mathrm{Sp}}
\newcommand{\Uu}{\mathrm{U}}
\newcommand{\SL}{\mathrm{SL}}
\newcommand{\GL}{\mathrm{GL}}
\newcommand{\Oo}{\mathrm{O}}
\newcommand{\SO}{\mathrm{SO}}
\newcommand{\GSp}{\mathrm{GSp}}
\newcommand{\GSpin}{\mathrm{GSpin}}
\newcommand{\GSO}{\mathrm{GSO}}
\newcommand{\PGL}{\mathrm{PGL}}
\newcommand{\GO}{\mathrm{GO}}
\newcommand{\Hom}{\mathrm{Hom}}
\newcommand{\Gal}{\mathrm{Gal}}
\newtheorem{thm}{Theorem}[section]
\newtheorem{lem}[thm]{Lemma}
\newtheorem{prop}[thm]{Proposition}
\newtheorem{coro}[thm]{Corollary}
\theoremstyle{remark}
\newtheorem{rem}[thm]{Remark}
\theoremstyle{definition}
\numberwithin{equation}{section}
\def\iddots{\mathinner{\mkern1mu\raise\p@
	\hbox{.}\mkern2mu\raise4\p@\hbox{.}\mkern2mu
	\raise7\p@\vbox{\kern7\p@\hbox{.}}\mkern1mu}}
\def\adots{\mathinner{\mkern2mu\raise\p@\hbox{.}
 \mkern2mu\raise4\p@\hbox{.}\mkern1mu
 \raise7\p@\vbox{\kern7\p@\hbox{.}}\mkern1mu}}
\title{The Prasad conjectures for $\Uu_2$, $\SO_4$ and $\Sp_4$}
\author{Hengfei LU}
\address{Department of Mathematics, Weizmann Institute of Science, 234 Herzl St.  P.O.B. 26, Rehovot, 7610001, Israel}
\email{hengfei.lu@weizmann.ac.il}
\begin{document}
\maketitle
\begin{abstract}
We will give a proof to the Prasad conjectures for  $\Uu_2$,  $\SO_4$ and $\Sp_4$ over a quadratic field extension. 
\end{abstract}
	\subsection*{Keywords}  distinction problems, base change map, the Prasad conjecture, Whittaker models 
\subsection*{MSC(2000)} 11F27$\cdot$11F70$\cdot$22E50
\tableofcontents
\section{Introduction}
Assume that $F$ is a nonarchimedean local field with characteristic $0$.
Let $G$ be a connected reductive group defined over $F$ and $H$
be a closed subgroup of $G$. Given a smooth irreducible representation $\pi$ of $G(F),$ one can study  the complex vector space $\Hom_{H(F)}(\pi,\mathbb{C}).$ If it is nonzero, then we say that $\pi$ is  $H(F)$-distinguished. 
\par
Period problems,
which are closely related to Harmonic Analysis, have been extensively studied for classical groups. 
 The most general situations have been studied  in  \cite{sakellaridis2012periods} when $G$ is split.
 Given a spherical variety $X=H\backslash G$, Sakellaridis-Venkatesh \cite{sakellaridis2012periods}  introduce a certain complex reductive group $\hat{G}_X$ associated with the variety $X$, to deal with  the spectral decomposition of $L^2(H\backslash G)$ under the assumption that $G$ is split.
 In a similar way, Dipendra Prasad \cite[\S9]{prasad2015arelative} introduces a certain quasi-split reductive group $H^{op}$ to deal with the distinction problem with respect to a quadratic character $\chi_H$ (depends on the quadratic extension $E/F$ and the reductive quasi-split group $H$), when the subgroup $H$ is the Galois fixed points of $G$, i.e. $H=G^{\Gal(E/F)}$, where $E$ is a quadratic field extension of $F$. The cases
 $H=\SL_2$ and $H=\SL_n$ ($n\geq3$) have been proved in \cite{anandavardhanan2003distinguished,anandavardhanan2016distinguished,L2018pacific} and the cases $H=\GL_2$ and $H=\PGL_2$ have been verified in \cite{dipendra1992invariant,lu2016new}.
 The cases $H=\GSp_4$ and $H=\SO_5$ have been studied for the tempered representations in \cite{lu2018GSp(4)}.
  In this paper,
  we will focus on the following cases: 
  \begin{itemize}
  	\item $G=R_{E/F}\Uu_2$ and $H=\Uu_2$,
  	\item $G=R_{E/F}\SO_4$ and $H=\SO_4$,
  	\item $G=R_{E/F}\Sp_4$ and $H=\Sp_4$,
  \end{itemize}
   where $R_{E/F}$ denotes the Weil restriction of scalars. Let $\theta$ be the involution defined on $G(F)$ and $H(F)=G^\theta$. More precisely, given $g\in G(F)$, $g^t$ denotes its transpose matrix,
  \[\theta(g)=\begin{cases}
  \omega_0\cdot\sigma((g^t)^{-1})\cdot\omega_0^{-1}&\mbox{ if }G(F)=\GL_2(E)\\
    \sigma(g)&\mbox{ if }G(F)=\SO_4(E) \mbox{  or  }\Sp_4(E)
  \end{cases} \]
  where $\sigma$ is the nontrivial element in $\Gal(E/F)$ and $\omega_0=\begin{pmatrix}
  &1\\-1&
  \end{pmatrix}$. So $H(F)=\{g\in G(F):\theta(g)=g \}$.
  
\par
Let $W_F$ (resp. $W_E$) be the Weil group of $F$ (resp. $E$) and let  $WD_F$ (resp. $WD_E$) be the Weil-Deligne group. Let $\psi$ be any nondegenerate additive character of $F$ and $\psi_E=\psi\circ tr_{E/F}.$
Assume that $\pi$ is an irreducible smooth representation of $H(F)$
 with a Langlands parameter 
$\phi_\pi:WD_F\longrightarrow {}^LH$ and a character $\lambda$ of the component group $S_{\phi_\pi }=C(\phi_\pi)/C^\circ(\phi_\pi),$ where $C(\phi_{\pi})$ is the centralizer of $\phi_{\pi}$ in $\hat{H}$ (the Langlands dual group of $H$) and $C^\circ(\phi_{\pi})$ is the connected component of $C(\phi_\pi).$
 Then $\phi_\pi|_{WD_E}$
gives a Langlands parameter of $H(E)=G(F)$. The map $$\Phi:\phi_{\pi}\longrightarrow \phi_{\pi}|_{WD_E}$$ is called the base change map. The parameter $\phi_{\pi}$ is called a lifted parameter or an extension of $\phi_\pi|_{WD_E}$. The Prasad conjecture for the quasi-split group $H$ implies the following:
\begin{thm}\label{localmain}
Let  $E$ be a quadratic field extension of a nonarchimedean local field $F$ with associated 
 Galois group $\Gal(E/F)=\{1,\sigma \}$. 
Assume that $\pi$ is an irreducible smooth admissible representation of $G(F)$ with an enhanced $L$-parameter $(\phi_\pi,\lambda)$ and that the $L$-packet $\Pi_{\phi_{\pi}}$ is generic. Then
\begin{enumerate}[(i)]
\item  If $\pi$ is  $H(F)$-distinguished, then $\pi^\vee \cong \pi^\theta $ 
 and there exists a Langlands parameter $\phi$ of $H^{op}(F)$ such that $\phi|_{WD_E}=\phi_\pi$, where $H^{op}=\begin{cases}
\GL_2,&\mbox{if }H=\Uu_2;\\
\SO_4,&\mbox{if }H=\SO_4.
\end{cases}$
\item  If $\phi_\pi=\phi_{\pi'}|_{WD_E}$ for some irreducible representation $\pi'$ of $H^{op}(F)$ and
$\pi$ has a Whittaker model with respect to a non-trivial  additive character $\psi_0$ of $N(E)$ which is trivial on $N(F)$, where $N$ is a unipotent subgroup of $H$, then $\pi$ is $H(F)$-distinguished for $H=\Uu_{2}$ and $\SO_4$. 
\item Suppose $H=\Uu_{2}$ or $\SO_4$. Then there is an identity
\begin{equation}\label{conjidentity}
\dim\Hom_{H(F)}(\pi,\mathbb{C})+\dim\Hom_{H'(F)}(\pi,\mathbb{C})=\sum_{\phi\in F(\phi_\pi) } m(\lambda,\phi)\frac{\deg \Phi(\phi) }{d_0(\phi)}
\end{equation}
where
\begin{itemize}
	\item $H'$ is the unique nontrivial pure inner form of $H$ defined over $F$; 
	\item $F(\phi_\pi )=\{\phi:WD_F\longrightarrow {}^LH^{op}:\phi|_{WD_E}=\phi_\pi \}$ is the set of the lifted parameters;
	\item $m(\lambda,\phi)$ is the mulitplicity of the trivial character contained in the restricted character $\lambda|_{S_{\phi}}$;
	\item $\deg\Phi(\phi)$ is the degree of the base change map at $\phi$;
	\item $d_0(\phi)=|Coker\{S_{\phi}\longrightarrow S_{\phi_\pi}^{\Gal(E/F)} \}|$ is the size of the cokernel. 
\end{itemize}
\end{enumerate}
\end{thm}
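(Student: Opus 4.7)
The plan is to handle the three groups separately, each time reducing the distinction problem for $G(F)=H(E)$ under $H(F)$ to a setting already settled in the literature, via an accidental isomorphism or the local theta correspondence. The necessary condition $\pi^\vee\cong\pi^\theta$ in (i) is in fact general: it follows from a standard matrix-coefficient argument using that $\theta$ preserves $H(F)$ and sends $\pi$ to $\pi^\theta$, so I would establish this uniformly for all three groups at the outset.

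For $H=\Uu_2$ one has $G(F)=\GL_2(E)$, and the key input is Flicker's classical characterization of $\Uu_2(F)$-distinguished representations of $\GL_2(E)$: they are precisely those whose $L$-parameter is the restriction to $WD_E$ of a parameter of $\GL_2(F)=H^{op}(F)$, equivalently those whose Asai $L$-function has a pole at $s=1$. This yields the existence of the lift in (i); for the converse in (ii), the distinguished functional is realized as the $\Uu_2(F)$-period integral of the Whittaker model of $\pi$ against the prescribed character $\psi_0$, whose non-vanishing is ensured by the Rankin--Selberg calculation of the Asai $L$-function. For (iii) one plays the split unitary group off against its anisotropic pure inner form, using the dichotomy that exactly one pure inner form in each generic packet carries a distinguished vector, and computes the base-change multiplicity on the right-hand side directly.

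For $H=\SO_4$ I would exploit the isogenies $\SL_2\times\SL_2\to\SO_4^{\mathrm{sp}}$ and $R_{K/F}\SL_2\to\SO_4^*$ (for a quadratic $K/F$) to present an irreducible representation of $\SO_4(E)$ as the image of a pair $\tau_1\boxtimes\tau_2$ of $\SL_2(E)$-representations with matching central character on the kernel of the isogeny. Distinction by $\SO_4(F)$ then translates into a joint distinction problem for the two $\SL_2$-factors, to which the Prasad conjecture for $\SL_2$ proved in \cite{anandavardhanan2003distinguished,anandavardhanan2016distinguished} directly applies. This supplies (i), (ii) and (iii) simultaneously, with $H^{op}=\SO_4$ identified through the corresponding $\GL_2$-side lifts of the $\SL_2$-factors.

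The case $H=\Sp_4$, for which only $\pi^\vee\cong\pi^\theta$ in (i) is asserted, is handled via the local theta correspondence between $\Sp_4$ and an orthogonal group $\Oo_{2,2}$: a see-saw diagram converts an $\Sp_4(F)$-invariant functional on $\pi$ into an $\Oo_{2,2}(F)$-invariant one on its theta lift, reducing to an orthogonal distinction question accessible through yet another isogeny back to $\GL_2$'s. The main obstacle throughout will be the multiplicity formula \eqref{conjidentity}: matching its left-hand side, obtained from distinction dimensions on both pure inner forms of $H$, with the right-hand side, a combinatorial sum involving $\deg\Phi(\phi)$ and the cokernel $d_0(\phi)$ of a Galois-equivariant component group map, requires careful tracking of the centralizers of the various $L$-parameters under the isogenies, base change and Galois action. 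This bookkeeping of component groups and distinguishing characters, rather than the individual distinction statements, is where the delicate point of the argument lies.
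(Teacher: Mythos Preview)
Your proposal has two genuine gaps.

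\textbf{For $\SO_4$.} The isogeny $\SL_2\times\SL_2\to\SO_{2,2}$ is \emph{not} surjective on $F$-points: the cokernel is $F^\times/(F^\times)^2$ via the spinor norm. Hence an $\SO_{2,2}(F)$-invariant functional on $\pi$ does \emph{not} automatically come from an $\SL_2(F)\times\SL_2(F)$-invariant functional on $\tau_1\boxtimes\tau_2$, and the multiplicity in (iii) cannot be read off as a product of $\SL_2$-multiplicities. The paper avoids this by going through the \emph{similitude} group: one first proves the conjecture for $\GSO_{2,2}\cong(\GL_2\times\GL_2)/\Delta\GL_1$, where the reduction to $\GL_2(F)$-distinction of each factor is genuinely clean, and then descends to $\SO_{2,2}$ using the counting formula $\dim\Hom_{\SO_{2,2}(F)}(\pi,\mathbb{C})=|X_\Sigma|\cdot|Z_\Sigma|/|Y_\Sigma|$ of Anandavardhanan--Prasad (with $X_\Sigma$, $Y_\Sigma$, $Z_\Sigma$ the sets of twisting characters controlling the restriction $\GSO\to\SO$). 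The case-by-case verification of (iii) then proceeds by analyzing $\phi_{\tilde\tau}=\phi_{\pi_1}\oplus\phi_{\pi_2}$ and matching $|F(\phi_\pi)|$ against this formula. Your $\SL_2$-route would need the same machinery anyway, but starting from $\GL_2$ makes the lifted parameters (and hence $F(\phi_\pi)$) visible directly.

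\textbf{For $\Uu_2$.} Your claimed ``dichotomy that exactly one pure inner form in each generic packet carries a distinguished vector'' is false: for $\pi=\pi(\chi_1,\chi_2)$ with $\chi_i=\chi_i^\sigma$ and $\chi_1\neq\chi_2$, Feigon--Lapid--Offen give $\dim\Hom_{H(F)}(\pi,\mathbb{C})=\dim\Hom_{H'(F)}(\pi,\mathbb{C})=2$, and for $\pi$ square-integrable both are $1$. The paper's proof of (iii) is a direct case analysis of $\phi_\pi$ using their multiplicity results, with the residual case $\phi_\pi=2\chi$ ($\chi=\chi^\sigma$) settled by bounding $\dim\Hom_{\Uu_2(F)}(\pi,\mathbb{C})$ above by $\dim\Hom_{\SL_2(F)}(\pi,\mathbb{C})=2$ via the inclusion $\SL_2(F)\subset\Uu_2(F)$. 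The input you want here is Feigon--Lapid--Offen, not Flicker (whose results concern $\GL_n(F)$-, not unitary, distinction).

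Finally, the $\Sp_4$ discussion is largely beside the point for this theorem: parts (ii) and (iii) are stated only for $\Uu_2$ and $\SO_4$, and the $\Sp_4$ multiplicity formula is a separate theorem proved by an entirely different (and much longer) case analysis via restriction from $\GSp_4$.
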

\begin{thm}\label{sp(4)}
	Let $\pi$ be an irreducible tempered  representation of $\Sp_4(E)$ with an enhanced $L$-parameter $(\phi_\pi,\lambda )$, distinguished by $\Sp_4(F)$.
	 Then the multiplicity
	\[\dim\Hom_{\Sp_4(F)}(\pi,\mathbb{C})=\sum_{\phi\in F(\phi_\pi) } m(\lambda,\phi)\frac{\deg \Phi(\phi) }{d_0(\phi)}  \]
	where $F(\phi_{\pi})$, $m(\lambda,\phi)$ and $d_0(\phi)$ are defined as above in \eqref{conjidentity}.
\end{thm}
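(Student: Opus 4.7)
The plan is to establish Theorem \ref{sp(4)} by leveraging the local theta correspondence to reduce the $\Sp_4$ distinction problem to cases already treated in Theorem \ref{localmain} (for $\SO_4$) and in \cite{lu2018GSp(4)} (for $\GSp_4$). The argument has three main stages.

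First, I would set up a seesaw dual pair involving $\Sp_4$ and an orthogonal group $\Oo(V)$ over $E$, with $V$ a suitable split quadratic space, embedded inside a larger symplectic group over $E$. Taking $F$-rational points on the $\Sp_4$ side and considering the natural period integral, one obtains a seesaw identity that expresses $\Hom_{\Sp_4(F)}(\pi, \mathbb{C})$ in terms of a distinction period for the theta lift $\Theta(\pi)$ on the orthogonal side. Since $\pi$ is tempered and distinguished, conservation together with temperedness of first-occurrence lifts guarantees that $\Theta(\pi)$ is nonzero and tempered in a suitable orthogonal tower.

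Second, I would analyze the resulting orthogonal distinction problem. Using the accidental isomorphisms $\SO_4 \cong (\GL_2 \times \GL_2)/\mathbb{G}_m$ and $\PGSp_4 \cong \SO_5$, together with Theorem \ref{localmain}(iii), one reads off a multiplicity formula on the orthogonal side. This is then compared with the distinction identity for $\GSp_4$ proved in \cite{lu2018GSp(4)}, which is accessible via the similitude enhancement of the theta correspondence and supplies the formula after restriction from $\GSp_4$ to $\Sp_4$.

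Third, I would transfer the formula back to the $\Sp_4$ side. The identification of $F(\phi_\pi)$ with the set of parameter extensions on the orthogonal side uses the explicit description of the local theta correspondence on $L$-parameters. The factors $m(\lambda,\phi)$, $\deg \Phi(\phi)$, and $d_0(\phi)$ then arise from the Mackey decomposition of the distinction period together with the component group exact sequence relating $S_\phi$ to $S_{\phi_\pi}^{\Gal(E/F)}$; in particular, $d_0(\phi)$ records the failure of surjectivity in this sequence, while $m(\lambda,\phi)$ records the trivial-character multiplicity in the restriction of the enhanced character $\lambda$.

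The main obstacle will be the matching of enhanced $L$-parameters and pure inner forms across the theta correspondence. Specifically, one must verify that $\Sp_4(F)$-distinguished representations in a Vogan packet correspond under theta lift to distinguished representations for the appropriate pure inner form of the orthogonal group, and that the component group characters transfer correctly without spurious multiplicities from the similitude enhancement. A closely related subtlety is the bookkeeping of central characters when passing between $\Sp_4$ and $\GSp_4$, since $\Sp_4$ has center $\mu_2$ and the restriction from $\GSp_4$ to $\Sp_4$ generally splits an $L$-packet into several irreducible constituents whose individual distinction properties must be tracked.
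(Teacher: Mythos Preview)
Your outline departs substantially from the paper's argument and, as written, has a genuine gap. The paper does \emph{not} reduce the $\Sp_4$ problem to the $\SO_4$ problem via a seesaw. Instead it follows the Anandavardhanan--Prasad strategy: choose a $\GSp_4(F)$-distinguished $\tilde{\tau}$ of $\GSp_4(E)$ with $\tilde{\tau}|_{\Sp_4(E)}\supset\tau$, invoke the $\GSp_4$ multiplicity formula from \cite{lu2018GSp(4)}, and use the counting identity
\[
\frac{|Y_{\tilde{\tau}}|}{|Z_{\tilde{\tau}}|}\,\dim\Hom_{\Sp_4(F)}(\tau,\mathbb{C})
=\sum_{\chi\in X_{\tilde{\tau}}}\dim\Hom_{\GSp_4(F)}(\tilde{\tau},\chi\circ\lambda_W),
\]
with $X_{\tilde{\tau}},Y_{\tilde{\tau}},Z_{\tilde{\tau}}$ the standard sets of twisting characters. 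The remainder of the proof is an exhaustive case analysis of $\phi_{\tilde{\tau}}$ (irreducible; $\rho\oplus\rho\chi$; endoscopic), in which both sides are computed explicitly and matched, repeatedly feeding in the $\SL_2$ case of the Prasad conjecture from \cite{L2018pacific}. Theta correspondence appears only peripherally, in Proposition~\ref{genericrep}, where a $\GSp_4$--$\GO_{3,3}$ identity from \cite{lu2016new} is used to show that an $\Sp_4(F)$-distinguished constituent of $\tilde{\tau}|_{\GSp_4(E)^\natural}$ must be $\psi_0$-generic.

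The gap in your plan is the first stage. The subgroup $\Sp_4(F)\subset\Sp_4(E)$ arises as Galois fixed points, not as one member of a reductive dual pair, and there is no seesaw diagram over $E$ in which the period $\Hom_{\Sp_4(F)}(\pi,\mathbb{C})$ appears on one side and an orthogonal-group distinction of $\Theta(\pi)$ appears on the other; the trivial representation of $\Sp_4(F)$ is not a small theta lift in any usable sense. Even if one could force such an identity, the generic tempered $\pi$ of $\Sp_4(E)$ need not lift to $\Oo_4(E)$ at all (first occurrence is typically at $\Oo_5$ or $\Oo_6$), so Theorem~\ref{localmain} for $\SO_4$ cannot serve as the main input. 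Finally, your third stage---matching $F(\phi_\pi)$, $m(\lambda,\phi)$, $\deg\Phi(\phi)$, $d_0(\phi)$ across a putative theta transfer---is exactly the content that the paper establishes by brute-force enumeration, and there is no known uniform argument that bypasses it.
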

\par
Anandavardhanan-Prasad \cite{anandavardhanan2003distinguished,anandavardhanan2016distinguished} use the restriction to $\SL_n(E)$ from $\GL_n(F)$-distinguished representation of $\GL_n(E)$ to show that the mulitiplicity $\dim\Hom_{\SL_n(F)}(\pi,\mathbb{C})$ is equal to the number of inequivalent lifts where $H=\SL_n$ and $H^{op}=\mathrm{SU}_n$ with $n\geq3$ (where $H^{op}=\SL_2$ if $H=\SL_2$). We will use a similar method to deal with the case when $H=\SO_4$, which involves the distinction problems for $\GSO_4$ or $\GSpin_4$ over a quadratic field extension $E/F$. The main task in this paper is to verify the identity \eqref{conjidentity}. In particular, on the right hand side (called the Galois side) of \eqref{conjidentity}, we will construct and study the finite set $F(\phi_{\pi})$ case by case in \S3.
\par
 Thanks to the results 
\cite[Theorem 0.2]{lapid2012unitary} when $H=\Uu_2$, one can get the multiplicities on the left hand side (called the automorphic side) of \eqref{conjidentity} for almost all cases except  the Langlands parameter $\phi_\pi=2\chi$ where $\chi$ is Galois invariant. It turns out that the exception case for $\Uu_2$ can be 
obtained via the Prasad conjecture for $\SL_2$ over a quadratic field extension $E/F$. (See Lemma \ref{SL(2)}.)  When $H=\Sp_4$, we need to assume that $\pi$ is tempered because  the identity \eqref{conjidentity} for $H=\GSp_4$ has been discussed in \cite{lu2018GSp(4)} only when 
the representation of $\GSp_4(E)$ is tempered. Another difference for the distinction problems between $\SO_4$ and $\Sp_4$ is that the pair $(\GSO_4(E),\GSO_4(F))$ is a Gelfand pair while $(\GSp_4(E),\GSp_4(F))$ is not a Gelfand pair.
\begin{rem}
	In \cite[Theorem 1]{beuzart2017distinguished}, Raphael Beuzart-Plessis uses the relative trace formula to show that $$\dim\Hom_{H'(F)}(\pi',\chi_{H'})=\dim\Hom_{H(F)}(\pi,\chi_H),$$ where $H'$ is an inner form of $H$ defined over $F$, $\chi_{H'}$ is a quadratic character of $H'(F)$ and $\pi'$ is a stable square-integrable representation of $(R_{E/F}H')(F)=H'(E)$ which has a transfer to $\pi$ of $H(E)$. 
\end{rem}
\begin{rem}
	It has been shown that $\deg\Phi(\phi)=1$ if $\phi$ is discrete, and so is $d_0(\phi)$. (See \cite[Remark 22]{prasad2015arelative}.) 
\end{rem}
Now we briefly describe the contents and the organization of this paper.
 The proof of Theorem \ref{localmain} for the quasi-split group $\Uu_{2}$ will be given in \S $2$. 
 In \S $3$, we study the distinction problems for $\GSO_4$ and $\SO_4$, including the cases when the group $H$ is not split but quasi-split.
The last section focuses on  the proof of Theorem \ref{sp(4)} which depends on the local Langlands correspondence for $\Sp_4$ proved by Gan-Takeda in \cite{takeda2010Sp(4)}.
\subsection*{Acknowledgments} The author thanks  Dipendra Prasad for his guidance and numerous discussions when he was a Visiting Fellow at Tata Insititute of Fundamental Research, Mumbai. He also thanks Yuanqing Cai, Wee Teck Gan and Sandeep Varma for helpful discussions. Part of the work was  done and was partially supported while he was visiting Wuhan University, Zhejiang University and the Institute for Mathematical Sciences of National University of Singapore in 2018. He would like to thank them for  their hospitalities.   This work was partially supported by the ERC, StG grant number 637912 during revision.

\section{The Prasad conjecture for $\Uu_2$}
In this section, we will verify the Prasad conjecture for $H=\Uu_2$. In this case $G(F)=\GL_2(E)$ and $H'(F)$ is the unitary group of a $2$-dimensional Hermitian vector space $V$ with nontrivial discriminant, which is not quasi-split.
\par 
 First, let us recall \cite[Theorem 0.2]{lapid2012unitary}.
\begin{thm}[Feigon-Lapid-Offen]\label{lapidoffen}
	Suppose that $\pi$ is an irreducible admissible generic representation of $G(F)=\GL_2(E)$. If $\pi$ is $H(F)$-distinguished, then $\pi$ is Galois invariant. Conversely, assume $\pi=\pi^\sigma$. Then 
	\begin{itemize}
		\item If $\pi$ is a square-integrable representation, then $$\dim \Hom_{H(F)}(\pi,\mathbb{C} )=1=\dim\Hom_{H'(F)}(\pi,\mathbb{C}).$$
		\item If $\pi=\pi(\chi_1,\chi_2)$ with $\chi_1=\chi_2^\sigma\neq\chi_2$ is a principal series representation, then
		\[\dim\Hom_{H(F)}(\pi,\mathbb{C})=1\mbox{    and    }\dim\Hom_{H'(F)}(\pi,\mathbb{C})=0. \]
		\item If $\pi=\pi(\chi_1,\chi_2)$  with $\chi_1\neq\chi_2$ and $\chi_i=\chi_i^\sigma$ for $i=1,2$, then \[\dim\Hom_{H(F)}(\pi,\mathbb{C})=2=\dim\Hom_{H'(F)}(\pi,\mathbb{C}). \]
		\item If $\pi=\pi(\chi,\chi)$ with $\chi=\chi^\sigma$, then $\dim\Hom_{H(F)}(\pi,\mathbb{C}) \geq2$.
	\end{itemize}
\end{thm}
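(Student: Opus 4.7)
The plan is to separate the two implications. For the forward direction, the involution $\theta$ on $\GL_2(E)$ factors as $\sigma \circ \tau$, where $\tau(g) = \omega_0 (g^t)^{-1} \omega_0^{-1}$ is the outer automorphism realising the contragredient, so that $\pi^{\theta} \cong (\pi^\sigma)^\vee$ for every irreducible admissible $\pi$. A standard matrix-coefficient/MVW-style argument shows that $H(F)$-distinction forces $\pi^{\theta} \cong \pi^\vee$; combining the two identifications yields $(\pi^\sigma)^\vee \cong \pi^\vee$, and hence $\pi \cong \pi^\sigma$.

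For the converse in the principal-series cases, write $\pi = \mathrm{Ind}_{B(E)}^{\GL_2(E)}(\chi_1 \boxtimes \chi_2)$ and compute $\Hom_{H(F)}(\pi, \mathbb{C})$ via the geometric lemma applied to the double-coset space $B(E) \backslash \GL_2(E) / H(F)$, equivalently the $H(F)$-orbits on $\mathbb{P}^1(E)$. There are at most two orbit types: isotropic lines, which are present only when $H$ is quasi-split, and anisotropic lines, classified by $F^*/N_{E/F}(E^*)$. Frobenius reciprocity together with the usual modular character bookkeeping shows that the isotropic orbit contributes one functional precisely when $\chi_1 \chi_2^\sigma = 1$, while the anisotropic orbits contribute precisely when both $\chi_i$ are Galois invariant. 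Summing contributions orbit by orbit, and noting that the isotropic orbit disappears on the non-quasi-split form $H'$, reproduces the dimensions $0$, $1$, and $2$ asserted in Cases (ii)--(iv); Case (iv) requires only that one verifies that the two anisotropic contributions persist even after the specialisation $\chi_1 = \chi_2$, giving the inequality $\dim \geq 2$.

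The main obstacle is Case (i), the square-integrable representations, where no Levi-induced realisation of $\pi$ is available and the Mackey filtration collapses. I would pursue two routes in parallel. Locally, the theta correspondence provides a way out: using the seesaw pair $(\Uu_2 \times \Uu_2, \Uu_1 \times \Uu_1)$, distinction of $\pi$ by $H(F)$ transports to a toric period on $\Uu_1(F)$, where multiplicity one is immediate, and the dichotomy between the quasi-split and non-quasi-split theta lifts of $\pi$ accounts for the equal multiplicities on $H$ and $H'$. Globally, one embeds $\pi$ into a cuspidal automorphic representation of $\GL_2(\mathbb{A}_E)$ and applies a Jacquet--Ye type relative trace formula comparing the unitary period on $\Uu_2$ with the base-change period on $\GL_2$; the hard analytic input here is the fundamental lemma and smooth transfer for unitary periods, which is precisely what Feigon--Lapid--Offen supply. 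Either way, one arrives at multiplicity one on both $H$ and $H'$, in agreement with the local Jacquet--Langlands correspondence for discrete series.
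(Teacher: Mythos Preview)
This theorem is not proved in the paper at all; it is quoted from Feigon--Lapid--Offen (reference \cite{lapid2012unitary}, Theorem~0.2) as a black-box input, so there is no in-paper argument to compare your proposal against. The only related computation the author actually carries out is the sharpening of the last bullet to an equality in Lemma~\ref{SL(2)}, done by bounding $\dim\Hom_{\Uu_2(F)}(\pi,\mathbb{C})$ from above via the inclusion $\SL_2(F)\subset\Uu_2(F)$ and the known $\SL_2$-multiplicity, and by a direct double-coset count for the non-quasi-split form $H'$.

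That said, your outline is a fair sketch of how the cited result is in fact obtained. The geometric-lemma analysis of $B(E)\backslash\GL_2(E)/H(F)$ is the standard route for the principal-series bullets, and the relative-trace-formula comparison you describe is precisely the engine of the Feigon--Lapid--Offen paper for the discrete series. One caution on your forward direction: the step ``$H(F)$-distinction forces $\pi^\theta\cong\pi^\vee$'' is not literally an MVW statement and is the nontrivial part; it requires either a Gelfand--Kazhdan-type anti-involution argument for the pair $(\GL_2(E),\Uu_2)$ or the explicit Bessel-distribution identities established in the cited reference. Once that is granted, the rest of your first paragraph is correct.
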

This section focuses on the last case $\phi_{\pi}=2\chi$ with $\chi=\chi^\sigma$. 
\begin{lem}\label{SL(2)}
	Assume that $\pi=\pi(\chi,\chi)$ is a representation of $\GL_2(E)$ with $\chi=\chi^\sigma$. Then
	\[\dim\Hom_{H(F)}(\pi,\mathbb{C} )=2=\dim\Hom_{H'(F)}(\pi,\mathbb{C}) .\]
\end{lem}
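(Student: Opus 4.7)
The plan is to prove the matching upper bound $\le 2$ to the lower bound $\ge 2$ already furnished by Theorem \ref{lapidoffen}, by reducing to the Prasad conjecture for $\SL_2$ over $E/F$, which is proved in \cite{anandavardhanan2003distinguished,anandavardhanan2016distinguished,L2018pacific}. First I would write $\pi(\chi,\chi)\cong(\chi\circ\det)\otimes\pi(1,1)$ as $\GL_2(E)$-representations. The hypothesis $\chi=\chi^\sigma$ forces $\chi|_{\Uu_1(F)}$ to be a (possibly trivial) quadratic character, so $(\chi\circ\det)|_{\Uu_2(F)}$ factors through $\det:\Uu_2(F)\to\Uu_1(F)$ and is trivial on the derived subgroup $\mathrm{SU}_2(F)=\ker\det$. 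Consequently $\pi(\chi,\chi)|_{\mathrm{SU}_2(F)}\cong\pi(1,1)|_{\mathrm{SU}_2(F)}$ and
\[\dim\Hom_{\Uu_2(F)}(\pi(\chi,\chi),\mathbb{C})\le\dim\Hom_{\mathrm{SU}_2(F)}(\pi(1,1),\mathbb{C}).\]

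Next I would identify the right-hand side with a known $\SL_2(F)$-distinction. Because $\mathrm{SU}_2$ (for the quasi-split $\Uu_2$) is the unique quasi-split simply-connected rank-$1$ semisimple $F$-group, it is isomorphic to $\SL_2$ over $F$; moreover the two copies of $\SL_2(F)$ inside $\GL_2(E)$---the naive one and $\mathrm{SU}_2(F)$---are conjugate by an explicit element of $\GL_2(E)$ (a cocycle trivializer, using $H^1(\Gal(E/F),\SL_2(E))=1$ by Kneser). Conjugating by this element,
\[\Hom_{\mathrm{SU}_2(F)}(\pi(1,1),\mathbb{C})\cong\Hom_{\SL_2(F)}(\pi_0,\mathbb{C}),\]
where $\pi_0:=\pi(1,1)|_{\SL_2(E)}$ is the irreducible principal series attached to the trivial Langlands parameter $\phi_{\pi_0}:W_E\to\PGL_2(\mathbb{C})$. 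The Prasad conjecture for $\SL_2$ computes this dimension as the number of lifts of $\phi_{\pi_0}$ to $W_F\to\PGL_2(\mathbb{C})$, i.e.\ conjugacy classes of morphisms $\Gal(E/F)\to\PGL_2(\mathbb{C})$, of which there are exactly two: the trivial map and the class of an order-$2$ element. Combined with Theorem \ref{lapidoffen}, this sandwiches the multiplicity between $2$ and $2$.

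For the non-quasi-split form $H'=\Uu_2'$, the same strategy applies with $\mathrm{SU}_2'\cong\SL_1(D)$ in place of $\mathrm{SU}_2$, where $D$ is the quaternion division algebra over $F$. The main obstacle is the compact-form analogue of Prasad's $\SL_2$-conjecture needed for the upper bound $\dim\Hom_{\SL_1(D)(F)}(\pi_0,\mathbb{C})\le 2$: this is not directly in the cited literature, so I would establish it either via the theta correspondence see-saw for the dual pair $(\Uu_1,\SL_2)$ to transfer the split-form result, or by a direct character-theoretic computation on the compact group $\SL_1(D)(F)$ combined with the Jacquet--Langlands transfer. With that bound in hand, the lower bound from Theorem \ref{lapidoffen} yields $\dim\Hom_{\Uu_2'(F)}(\pi(\chi,\chi),\mathbb{C})=2$ as well.
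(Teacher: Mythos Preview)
For the quasi-split $H(F)=\Uu_2(F)$, your approach is essentially the paper's, but the paper is more direct: rather than passing through $\mathrm{SU}_2(F)$ and a conjugation argument, it simply observes that $\SL_2(F)$ itself sits inside $\Uu_2(F)$ (for $g\in\SL_2(F)$ one has $\sigma(g)=g$ and $g^t\omega_0 g=\omega_0$), and then cites \cite[Theorem~1.3]{anandavardhanan2003distinguished} for $\dim\Hom_{\SL_2(F)}(\pi,\mathbb{C})=2$. Your twist by $\chi\circ\det$ and your identification $\mathrm{SU}_2\cong\SL_2$ via $H^1(\Gal(E/F),\SL_2(E))=1$ are correct but unnecessary.

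For $H'(F)$, your approach has a genuine gap. Theorem~\ref{lapidoffen} as stated gives \emph{no} lower bound for $\dim\Hom_{H'(F)}(\pi(\chi,\chi),\mathbb{C})$ in the case $\chi_1=\chi_2$; only the $H(F)$-multiplicity is addressed there. So even if you could establish the $\SL_1(D)$ upper bound (which, as you acknowledge, is not in the cited literature and for which you only sketch possible routes), you would have no sandwich. The paper avoids all of this: since $H'(F)$ is anisotropic, the double coset $B(E)\backslash\GL_2(E)/H'(F)$ has exactly two orbits, both open, and the Bernstein--Zelevinsky Geometric Lemma then gives $\dim\Hom_{H'(F)}(\pi(\chi,\chi),\mathbb{C})=2$ directly---there is no closed-orbit contribution to worry about. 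This is a one-line argument, in contrast to your proposed detour through compact-form theta lifts or character theory on $\SL_1(D)$.
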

\begin{proof}
	By the Geometric Lemma of Bernstein-Zelevinsky, one can easily obtain that $$\dim\Hom_{H'(F)}(\pi(\chi,\chi),\mathbb{C})=2$$ since there are two orbits in the double coset $B(E)\backslash\GL_2(E)/H'(F)$ and both are open. For the quasi-split group $H(F)$, it suffices to show that $\dim\Hom_{H(F)}(\pi,\mathbb{C})\leq2 $. Together with Theorem \ref{lapidoffen}, one has $\dim\Hom_{H(F)}(\pi,\mathbb{C})=2$. Note that $\SL_2(F)$ is a subgroup of $\mathrm{U}_{2}(F)$ and so the multiplicity
	$\dim\Hom_{H(F)}(\pi,\mathbb{C})$
	 has an upper bound $\dim \Hom_{\SL_2(F)}(\pi,\mathbb{C})$, which is $2$ due to \cite[Theorem 1.3]{anandavardhanan2003distinguished}. Then we are done.
\end{proof}
Now we start to  give a proof to Theorem \ref{localmain} for $\Uu_2$.
\begin{proof}[Proof of Theorem \ref{localmain} for $\Uu_2$] Recall that $H(F)=\Uu_2$ and $\chi_H=\mathbf{1}$.
\begin{enumerate}[(i)]
	\item It is obvious that $\pi$ is $H(F)$-distinguished if and only if $\pi=\pi^\sigma$ due to Theorem \ref{lapidoffen}. If $\pi=\pi^\sigma$, then there is an extension $\tilde{\phi}:WD_F\longrightarrow\GL_2(\mathbb{C})$ of $\phi_{\pi}$.
	\item It follows from Theorem \ref{lapidoffen} directly.
	\item Assume $\pi=\pi^\sigma$. Otherwise, both sides of \eqref{conjidentity} are zero. In this case, $d_0(\phi)=1=m(\lambda,\phi)$. There are several cases:
	\begin{itemize}
		\item If $\phi_{\pi}$ is irreducible, i.e., $\pi$ is a square-integrable representation of $\GL_2(E)$, then there exist two lifts $\phi$ and $\phi\omega_{E/F}$, where $\omega_{E/F}$ is the quadratic character associated to the quadratic field extension $E/F$ by the local class field theory. They are in the same orbit under  twisting by $\omega_{E/F}$, so it corresponds to two inner forms.
		\item If $\phi_{\pi}=\chi_1+\chi_2$ with $\chi_1=\chi_2^\sigma\neq\chi_2$, then $\dim\Hom_{H'(F)}(\pi,\mathbb{C})=0 $. On the Galois side, there is only one lift
		\[\phi=Ind_{W_E}^{W_F}\chi_1 \]
		which corresponds to the trivial inner form in $H^1(F,\Uu_2)$.
		\item If $\phi_{\pi}=\chi_1+\chi_2$ with $\chi_i=\chi_i^\sigma$ and $\chi_1\neq\chi_2$, then there are four lifts and there are two orbits in $F(\phi_{\pi})$ under  twisting  by $\omega_{E/F}$. On the automorphic side, both the muliplicities $\dim\Hom_{\Uu_2}(\pi,\mathbb{C}) $ and $\dim\Hom_{H'(F)}(\pi,\mathbb{C})$ are $2$.
		\item If $\phi_{\pi}=2\chi$ with $\chi=\chi^\sigma=\chi_F|_{W_E}$, then there are three lifts $\phi_1=2\chi_F$, $\phi_2=\chi_F+\chi_F\omega_{E/F}$ and $\phi_3=\phi_1\otimes\omega_{E/F} $. Moreover, $\deg\Phi(\phi_1)=1=\deg\Phi(\phi_3)$ and $\deg\Phi(\phi_2)=2$.
	\end{itemize}
Therefore, the identity \eqref{conjidentity} holds for any generic representation $\pi$ of $\GL_2(E)$.
\end{enumerate}
\end{proof}

\section{The Prasad conjecture for $\SO_4$ }
In this section, we will investigate the Prasad conjecture for $\SO_4$. There are several cases for $H(F)$: the split group $\SO_{2,2}(F)$  and the quasi-split group $\SO(V)$ where  $V$ is the $4$-dimensional $F$-vector space with nontrivial discriminant corresponding to $K$ (a quadratic field extension of $F$) and Hasse invariant $+1$. (Sometimes we say that the discriminant of $V$ is $K$.) Moreover,
\[\SO(V)=\GL_2(K)^\natural/F^\times \]
where $\GL_2(K)^\natural=\{g\in\GL_2(K):\det(g)\in F^\times \}\cong\GSpin(V)$ and
 there is an isomorphism 
\[\GSO(V)\cong\frac{\GL_2(K)\times F^\times }{\triangle K^\times } \]
where $\triangle K^\times\hookrightarrow\GL_2(K)\times F^\times $ via $k\mapsto(k,N_{K/F}(k)^{-1})$.
The similitude character $\lambda_V:\GSO(V)\rightarrow F^\times$ is given by
\[\lambda_V:(g,t)\mapsto N_{K/F}(\det g)\cdot t^2. \]
\subsection{The Prasad conjecture for $\GSO(V)$}
Let us recall the fact when $H=\GSO(V)$, $\chi_H=\omega_{E/F}$
and
\[H^{op}(F)\cong\frac{\Uu_2(EK/K)\times\Uu_1(E/F) }{\Uu_1(EK/K)} \]
where $EK$ is the composite field containing two distinct quadratic fields $E$ and $K$ of $F$ with Weil group $W_{EK}$ and $\Uu_i(EK/K)$ is the unitary group of $i$-dimensional Hermitian $EK$-vector space with trivial discriminant.
\begin{thm}
	Let $K$ be a quadratic field extension of $F$, different from $E$. Consider $$\GSO(V)\cong\frac{\GL_2(K)\times F^\times }{\triangle K^\times} .$$ Let $\Sigma=\pi\boxtimes\chi_E$ be an irreducible generic admissible representation of $\GSO(V\otimes_F E)$ with $\omega_{\pi}=\chi_E\circ N_{EK/E}$ and $\chi_E|_{F^\times}=\mathbf{1}$. Then  $\chi_E$ is a character of $\Uu_1(E/F)$ and
	\begin{enumerate}[(i)]
		\item $\Sigma$ is $(H(F),\chi_H)$-distinguished if and only if $\pi$ is $(\GL_2(K),\omega_{E/F}\circ N_{K/F}\circ\det )$-distinguished;
		\item there exists an identity
		\[\dim\Hom_{\GSO(V)}(\Sigma,\omega_{E/F}\circ\lambda_V)=\dim\Hom_{\GL_2(K)}(\pi,\omega_{E/F}\circ N_{K/F}\circ\det)=| F(\phi_{\Sigma})|  \]
		where $F(\phi_{\Sigma})=\{\phi:WD_F\longrightarrow{}^LH^{op}:\phi|_{WD_E}=\phi_{\Sigma} \}$.
	\end{enumerate}
\end{thm}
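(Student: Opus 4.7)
The plan is to exploit the explicit presentation $\GSO(V)(F)=(\GL_2(K)\times F^\times)/\triangle K^\times$ to reduce the theorem to the twisted linear-period problem for the pair $(\GL_2(EK),\GL_2(K))$, and then invoke the already-known Prasad-type dimension formula for $\GL_2$.

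First, I would prove part (i) and the first equality of part (ii) by a direct group-theoretic calculation. The composition
\[
\iota\colon\GL_2(K)\hookrightarrow\GL_2(K)\times F^\times\twoheadrightarrow\GSO(V)(F),\qquad g\mapsto(g,1),
\]
satisfies $\lambda_V(\iota(g))=N_{K/F}(\det g)$. Restricting an $(\GSO(V)(F),\omega_{E/F}\circ\lambda_V)$-equivariant form $\ell$ on $\Sigma=\pi\boxtimes\chi_E$ along $\iota$ produces a $(\GL_2(K),\omega_{E/F}\circ N_{K/F}\circ\det)$-equivariant form on $\pi$. Conversely, any such form on $\pi$ extends uniquely to $\Sigma$: the $F^\times$-direction is killed because $\omega_{E/F}(t^2)=1$ and $\chi_E|_{F^\times}=\mathbf{1}$, and well-definedness modulo $\triangle K^\times$ amounts to the identity $\omega_\pi(k)=\chi_E(N_{K/F}(k))$ for $k\in K^\times$, which follows from the hypothesis $\omega_\pi=\chi_E\circ N_{EK/E}$ together with $N_{EK/E}|_{K^\times}=N_{K/F}$. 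This yields a canonical isomorphism of the two Hom spaces.

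Next, I would identify the $\GL_2(K)$-Hom with the Galois side. Since $E\neq K$, the tensor product $EK=E\otimes_F K$ is a quadratic field extension of $K$, and by local class field theory $\omega_{E/F}\circ N_{K/F}=\omega_{EK/K}$. The middle Hom is therefore the twisted linear period for $(\GL_2(EK),\GL_2(K),\omega_{EK/K}\circ\det)$, whose dimension is at most $1$ by the Gelfand-pair property. By the Prasad conjecture for $\GL_2$ \cite{dipendra1992invariant}, this dimension equals the number of inequivalent extensions of $\phi_\pi$ across $WD_K$ to an $L$-parameter valued in the appropriate unitary group $\Uu_2(EK/K)$.

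Finally, I would match this with $|F(\phi_\Sigma)|$, using
\[
H^{op}(F)=\frac{\Uu_2(EK/K)\times\Uu_1(E/F)}{\Uu_1(EK/K)}.
\]
A lift $\phi\in F(\phi_\Sigma)$ factors as the data of a $\Uu_2(EK/K)$-extension of $\phi_\pi$ across $WD_K$ together with an extension of $\phi_{\chi_E}$ to a character of $\Uu_1(E/F)$, glued along the quotient by $\Uu_1(EK/K)$; the hypothesis $\chi_E|_{F^\times}=\mathbf{1}$ pins the latter down uniquely, so $|F(\phi_\Sigma)|$ reduces precisely to the $\Uu_2(EK/K)$-lift count from the previous step. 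The main obstacle is this final bookkeeping: one must verify case-by-case (supercuspidal versus principal-series $\pi$, and various configurations of $\Gal(EK/K)$-self-duality of the inducing characters) that the quotient by $\Uu_1(EK/K)$ in $H^{op}$ introduces no spurious identifications or omissions, so that the naive lift count really does agree with the $\GL_2$ twisted linear-period multiplicity.
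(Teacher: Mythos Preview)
Your proposal is correct and follows essentially the same route as the paper: the paper encodes your explicit computation for (i) as a commutative diagram of short exact sequences for $\GSO(V)=(\GL_2(K)\times F^\times)/\triangle K^\times$, and for (ii) it likewise invokes the known $(\GL_2(K),\omega_{EK/K}\circ\det)$-distinction criterion and then passes to $H^{op}$ via $\rho\mapsto\rho\boxtimes\chi_E$. Your final worry about case-by-case bookkeeping is unnecessary: since the twisted linear period for $(\GL_2(EK),\GL_2(K))$ has dimension at most $1$ and a conjugate-symplectic parameter of $\GL_2(EK)$ admits at most one $\Uu_2(EK/K)$-lift, both sides of the claimed identity lie in $\{0,1\}$, and so the numerical equality follows immediately from the biconditional you already sketched, with no case analysis required.
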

\begin{proof}
	\begin{enumerate}[(i)]
		\item It follows from the diagram
		\[\xymatrix{1\ar[r]&\GL_1(EK)\ar[r]&\GL_2(EK)\times\GL_1(E)\ar[r]&\GSO(V\otimes_F E)\ar[r]&1 \\
		1\ar[r]&\GL_1(K)\ar[u]\ar[r]^-\triangle&\GL_2(K)\times\GL_1(F)\ar[u]\ar[r]&\GSO(V)\ar[u]\ar[r]&1 } \]
	and the assumption $\chi_E|_{F^\times}=\mathbf{1}$. Here $\triangle(k)=(k,N_{K/F}(k)^{-1})$.
	\item Note that $\pi$ is $(\GL_2(K),\omega_{E/F}\circ N_{K/F}\circ\det )$-distinguished if and only if there exists a representation $\rho$ of $\Uu_2(EK/K)$ such that the $L$-parameter $\phi_\rho $ satisfying $$\phi_{\rho}|_{WD_{E}}=\phi_{\pi}:WD_E\longrightarrow(\GL_2(\mathbb{C})\times\GL_2(\mathbb{C}))\rtimes\Gal(EK/E) .$$
	 Then $\rho\boxtimes\chi_E$ is an irreducible representation of $H^{op}(F)$ and $\phi_{\rho\boxtimes\chi_E}|_{WD_E}=\phi_{\Sigma}$.
	\end{enumerate}
This finishes the proof.
\end{proof}

If $K=E$, then $\GSO(V\otimes_F E)=\GSO_{2,2}(E)$. Let $\Sigma=\pi_1\boxtimes\pi_2$ be an irreducible generic representation of $\GSO_{2,2}(E)$ with $\omega_{\pi_1}=\omega_{\pi_2}$. Then $$\dim\Hom_{\GSO(V)}(\Sigma,\mathbb{C})=\begin{cases}
1,&\mbox{  if  } \pi_1^\vee=\pi_2^\sigma;\\
0,&\mbox{  otherwise.}
\end{cases} $$ 

\subsection{The Prasad conjecture for $\SO(V)$} Consider $$\SO(V)=\GL_2(K)^\natural/F^\times\cong\GSpin(V)/F^\times$$ and $K\neq E$. Let $\pi$ be an irreducible representation of $\SO(V\otimes_F E )$ with trivial central character. There exists a representation $\tau$ of $\GL_2(EK)^\natural$ with trivial central character such that
\[\dim\Hom_{\SO(V)}(\pi,\mathbb{C} )=\dim\Hom_{\GSpin(V)}(\tau,\mathbb{C}), \]
where $\GL_2(EK)^\natural=\{g\in\GL_2(EK):\det(g)\in E^\times \}$. Let $\Gal(E/F)=\langle\sigma\rangle=\Gal(EK/K)$ and $\Gal(K/F)=\langle s\rangle=\Gal(EK/E)$. Note that
\[\xymatrix{\GL_2(EK)^\natural\ar[r]^-\cong& \GSpin(V\otimes_F E)\ar@{^{(}->}[r]& (R_{K/F}\GL_2)(E)\\ \GL_2(K)^\natural\ar[r]^-\cong& \GSpin(V)\ar[u]\ar@{^{(}->}[r]&(R_{K/F}\GL_2)(F).\ar[u] } \]
Assume that $\tau$ is $\GSpin(V)$-distinguished. Then there exists a representation $\tilde{\tau} $ of $\GL_2(EK)$ distinguished by $\GL_2(K)$. Moreover, the central character $\omega_{\tilde{\tau}}=\mu^s\mu^{-1}$ with $\mu(-1)=1$ for   a character $\mu$ of $\GL_1(EK)$ and $\omega_{\tilde{\tau}}|_{K^\times}=\mathbf{1}$. 
Thanks to \cite[Theorem 1.2]{anandavardhanan2003distinguished}, the finite sets $$Y_{\tilde{\tau} }=\{\chi:(EK)^\times/E^\times\longrightarrow\mathbb{C}^\times|\tilde{\tau}\otimes \chi\cong\tilde{\tau} \},$$
$Z_{\tilde{\tau}}=\{\chi\in Y_{\tilde{\tau}}|\chi|_{K^\times}=\mathbf{1}  \} $ and
\[X_{\tilde{\tau}}= \{\chi:K^\times/F^\times\longrightarrow\mathbb{C}^\times| \Hom_{\GL_2(K) }(\tilde{\tau},\chi\circ\det )\neq0  \}  \]
play a vital role in computing the multiplicity $\dim\Hom_{\GSpin(V)}(\tau,\mathbb{C})=\dim\Hom_{\SO(V)}(\pi,\mathbb{C})  $.
\begin{lem}\cite[Proposition 4.2]{anandavardhanan2003distinguished}
	Assume that $\tilde{\tau}$ is a square-integrable representation of $\GL_2(EK)$, distinguished by $\GL_2(K)$. Then there exists a bijection between the set $X_{\tilde{\tau}}$ and $Z_{\tilde{\tau}}$.
\end{lem}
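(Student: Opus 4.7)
The plan is to exhibit explicit maps $\Psi\colon X_{\tilde{\tau}} \to Z_{\tilde{\tau}}$ and $\Phi\colon Z_{\tilde{\tau}} \to X_{\tilde{\tau}}$ and verify they are mutually inverse, using Flicker's characterization of $\GL_{2}(K)$-distinction for cuspidal representations of $\GL_{2}(EK)$. A first easy observation is that for any $\chi\in X_{\tilde{\tau}}$, testing the $(\GL_{2}(K),\chi\circ\det)$-equivariance on a central element $zI$ with $z\in K^{\times}$ forces $\omega_{\tilde{\tau}}|_{K^{\times}} = \chi^{2}$; since the running hypothesis gives $\omega_{\tilde{\tau}}|_{K^{\times}} = \mathbf{1}$, the character $\chi$ must be quadratic on $K^{\times}/F^{\times}$.

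For the construction of $\Psi$, given $\chi\in X_{\tilde{\tau}}$ pick an extension $\tilde{\chi}\colon (EK)^{\times}\to \mathbb{C}^{\times}$ of $\chi$ that is trivial on $E^{\times}$; such an extension exists because $E^{\times}\cap K^{\times}=F^{\times}$ and $\chi|_{F^{\times}}=\mathbf{1}$. The twist $\tilde{\tau}\otimes\tilde{\chi}^{-1}$ is then a cuspidal representation of $\GL_{2}(EK)$ which is $(\GL_{2}(K),\mathbf{1})$-distinguished. Applying Flicker's theorem---any $\GL_{2}(K)$-distinguished cuspidal representation $\pi$ of $\GL_{2}(EK)$ satisfies $\pi^{\vee}\cong \pi^{\sigma}$---to both $\tilde{\tau}$ and $\tilde{\tau}\otimes\tilde{\chi}^{-1}$ and comparing the resulting isomorphisms yields $\tilde{\tau}\otimes(\tilde{\chi}\cdot\tilde{\chi}^{\sigma})\cong\tilde{\tau}$. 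Setting $\mu_{\chi} := \tilde{\chi}\cdot\tilde{\chi}^{\sigma}$, a direct verification shows $\mu_{\chi}|_{E^{\times}}=\mathbf{1}$ (from $\tilde{\chi}|_{E^{\times}}=\mathbf{1}$ and $\sigma(E^{\times})=E^{\times}$) and $\mu_{\chi}|_{K^{\times}}=\chi^{2}=\mathbf{1}$, so $\mu_{\chi}\in Z_{\tilde{\tau}}$.

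For the inverse $\Phi$, given $\mu\in Z_{\tilde{\tau}}$ a Hilbert 90-type argument applied to the cyclic group $\Gal(EK/E)=\langle s\rangle$ produces a character $\tilde{\chi}\colon (EK)^{\times}\to\mathbb{C}^{\times}$ with $\tilde{\chi}|_{E^{\times}}=\mathbf{1}$ and $\mu=\tilde{\chi}\cdot\tilde{\chi}^{\sigma}$. The restriction $\chi := \tilde{\chi}|_{K^{\times}}$ is a quadratic character of $K^{\times}/F^{\times}$, and the Galois self-duality of $\tilde{\tau}\otimes\tilde{\chi}^{-1}$, combined with the sharper form of Flicker's criterion (via the Asai $L$-function or the $\varepsilon$-factor), promotes this to genuine $(\GL_{2}(K),\mathbf{1})$-distinction, hence $\chi\in X_{\tilde{\tau}}$. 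That $\Psi$ and $\Phi$ are mutually inverse follows directly from their construction.

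The main technical obstacle is two-fold. First, well-definedness of $\Psi$: two admissible extensions of $\chi$ differ by a character of $(EK)^{\times}/(K^{\times}E^{\times})$, which shifts $\mu_{\chi}$ by a norm-type character, and this ambiguity must be shown to be exactly absorbed in $\Phi$; this reduces to a short Galois cohomology computation for the biquadratic extension $EK/F$. Second, one must refine Flicker's theorem to its $\varepsilon$-factor (equivalently, Asai-pole) form, since the bare self-duality $\pi^{\vee}\cong\pi^{\sigma}$ is shared by $\pi$ and $\pi\otimes\omega_{EK/K}$ while only one of the two is actually distinguished; preserving this refined criterion under the twist by $\tilde{\chi}^{-1}$ is where the most delicate bookkeeping occurs, and this is the crux of the argument.
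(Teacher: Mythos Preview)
Your plan is essentially the paper's own proof, but two points deserve cleanup.

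First, the forward map $\Psi$ is simpler than you present it: for \emph{any} extension $\tilde\chi$ of $\chi$ to $(EK)^\times$ one has $\tilde\chi\cdot\tilde\chi^{\sigma}(x)=\tilde\chi(x\,\sigma(x))=\tilde\chi(N_{EK/K}(x))=\chi(N_{EK/K}(x))$, since $N_{EK/K}(x)\in K^\times$. Thus $\mu_\chi=\chi\circ N_{EK/K}$ independently of the chosen extension, and your well-definedness concern for $\Psi$ evaporates. The paper simply writes the map as $\chi\mapsto\chi\circ N_{EK/K}$; injectivity is then the dichotomy that a square-integrable $\tilde\tau\otimes\mu$ cannot be simultaneously $\GL_2(K)$-distinguished and $(\GL_2(K),\omega_{EK/K})$-distinguished, so $\chi$ and $\chi\omega_{EK/K}$ cannot both lie in $X_{\tilde\tau}$.

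Second, in the inverse direction you invoke ``Hilbert~90 for $\Gal(EK/E)=\langle s\rangle$'' but then write $\mu=\tilde\chi\cdot\tilde\chi^{\sigma}$, which is the norm for $EK/K$, not $EK/E$; the two Galois elements are conflated. The paper's route is cleaner and avoids this: since $\mu\in Z_{\tilde\tau}$ is quadratic with $\mu|_{E^\times}=\mu|_{K^\times}=\mathbf{1}$, one checks directly that $\mu^{\sigma}\cdot\mu=\mu\circ N_{EK/K}=\mathbf{1}$ and $\mu^{s}\cdot\mu=\mu\circ N_{EK/E}=\mathbf{1}$, so $\mu$ is $\Gal(EK/F)$-invariant and hence $\mu=\chi_F\circ N_{EK/F}$ for a quadratic $\chi_F$ on $F^\times$. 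Setting $\chi_K=\chi_F\circ N_{K/F}$ gives $\chi_K\circ N_{EK/K}=\mu$ and $\chi_K|_{F^\times}=\chi_F^2=\mathbf{1}$; one then maps $\mu$ to $\chi_K$ or $\chi_K\omega_{EK/K}$ according to which one lies in $X_{\tilde\tau}$ (exactly your refined Flicker step). So your ``crux'' is correctly identified, but the Galois-cohomology detour you anticipate is unnecessary once you descend $\mu$ to $F^\times$ first.
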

\begin{proof}
		Suppose that $\chi\in X_{\tilde{\tau}}$. Then there exists a character $\mu$ of $\GL_1(EK)$ such that  $$\Hom_{\GL_2(K)}(\tilde{\tau}, \chi\circ\det )=  \Hom_{\GL_2(K) }(\tilde{\tau}\otimes\mu,\mathbb{C})\neq0$$  and $\mu|_{K^\times}=\chi^{-1}$. So 
		\[(\tilde{\tau}\otimes\mu )^\vee\cong(\tilde{\tau}\otimes\mu )^\sigma \]
		where $\tilde{\tau}^\vee$ denotes the contragredient representation of $\tilde{\tau}$.
		Note that $\tilde{\tau}$ is $\GL_2(K)$-distinguished. Then $\tilde{\tau}^\sigma\cong\tilde{\tau}^\vee$. Therefore $\tilde{\tau}^\vee\otimes\mu^{-1}\cong\tilde{\tau}^\sigma\otimes\mu^\sigma\cong\tilde{\tau}^\vee\otimes\mu^\sigma$ and so $$\tilde{\tau}\otimes\chi\circ N_{EK/K}\cong\tilde{\tau}\otimes(\mu\mu^\sigma)^{-1}\cong\tilde{\tau}.$$
	Thus $\chi\circ N_{EK/K}\in Y_{\tilde{\tau}} $.
	Observe that $\chi\omega_{EK/K}\notin X_{\tilde{\tau}}$ because $\tilde{\tau}$ is square-integrable, which implies that $$\Hom_{\GL_2(K)}(\tilde{\tau}\otimes\mu,\omega_{EK/K}\circ\det )=0.$$
	Moreover, $\chi\circ N_{EK/K}|_{K^\times}=\chi^2=\mathbf{1}$. Hence $\chi\mapsto \chi\circ N_{EK/K}$ gives an injective map from $X_{\tilde{\tau}}$ to $Z_{\tilde{\tau}}$.
	\par
	 Conversely, if $\chi\in Z_{\tilde{\tau}}$, then $\chi^2=\mathbf{1}$ and $\chi^s=\chi=\chi^\sigma$. There exists a character $\chi_F$ of $F^\times$ such that $\chi=\chi_F\circ N_{EK/F}$. Note that $\chi|_{K^\times}=\mathbf{1}=\chi|_{E^\times}$. Then $\chi_F^2=\mathbf{1}$.
	Set $\chi_K=\chi_F\circ N_{K/F}$. Then
	\[\chi_K|_{F^\times}=\chi_F^2=\mathbf{1}. \]
Suppose that $\chi=\mu\mu^\sigma$ with $\chi_K=\mu|_{K^\times}$ for a character $\mu$ of $\GL_1(EK)$. Then 
	\[(\tilde{\tau}\otimes\mu)^\vee\cong(\tilde{\tau}\otimes\chi\otimes\mu)^\vee\cong\tilde{\tau}^\vee\otimes \chi\otimes\mu^{-1}\cong\tilde{\tau}^\sigma\otimes\mu^\sigma\cong(\tilde{\tau}\otimes\mu)^\sigma \]
	and so $\tilde{\tau}\otimes\mu$ is either $\GL_2(K)$-distinguished or $(\GL_2(K),\omega_{EK/K})$-distinguished, but not both. We  map $\chi$ to $\chi_K$ or $\chi_K\omega_{EK/K}$ accordingly.
	Clearly the above two maps are inverses of each other. This finishes the proof.
\end{proof}
\begin{lem}
	If $p\neq2$, then $Z_{\tilde{\tau}}$ is a singleton.
\end{lem}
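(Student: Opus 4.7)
The plan is to invoke the bijection $X_{\tilde{\tau}}\cong Z_{\tilde{\tau}}$ from the previous lemma, reducing the claim $|Z_{\tilde{\tau}}|=1$ to $|X_{\tilde{\tau}}|=1$, and then to bound $|X_{\tilde{\tau}}|$ via a short class-field-theoretic computation that crucially uses $p\neq 2$.

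From the proof of the previous lemma I would extract two facts. First, every $\chi\in X_{\tilde{\tau}}$ satisfies $\chi^2=1$, so $X_{\tilde{\tau}}$ is contained in the group of quadratic characters of $K^\times/F^\times$. Second, the observation in the course of that proof — that $\chi\cdot\omega_{EK/K}\notin X_{\tilde{\tau}}$ whenever $\chi\in X_{\tilde{\tau}}$, because $\tilde{\tau}$ is square-integrable — applied to the trivial character $\mathbf{1}\in X_{\tilde{\tau}}$ yields immediately that $\omega_{EK/K}\notin X_{\tilde{\tau}}$.

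Next I would count the quadratic characters of $K^\times/F^\times$ in the case $p\neq 2$. The key input is $|L^\times/(L^\times)^2|=4$ for every $p$-adic field $L$ when $p$ is odd. Writing $K=F(\sqrt{d_K})$ and using the elementary identity $F^\times\cap(K^\times)^2=(F^\times)^2\cup d_K(F^\times)^2$ (valid since $d_K\notin(F^\times)^2$ as $\sqrt{d_K}\notin F$), one deduces $|K^\times/F^\times(K^\times)^2|=2$, so there is a unique non-trivial quadratic character of $K^\times/F^\times$. This character must be $\omega_{EK/K}$ itself: to verify $\omega_{EK/K}|_{F^\times}=\mathbf{1}$, note that for $a\in F^\times$ the value $\omega_{EK/K}(a)$ is the Hilbert symbol $(a,d_E)_K$, and the restriction $\mathrm{Br}(F)[2]\to\mathrm{Br}(K)[2]$ is zero because restriction scales invariants by $[K:F]=2$.

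Combining the pieces, $X_{\tilde{\tau}}\subseteq\{\mathbf{1},\omega_{EK/K}\}$ but $\omega_{EK/K}\notin X_{\tilde{\tau}}$, so $X_{\tilde{\tau}}=\{\mathbf{1}\}$ and $Z_{\tilde{\tau}}$ is a singleton. The main delicate step is the identification of the unique non-trivial quadratic character of $K^\times/F^\times$ with $\omega_{EK/K}$; once this is in hand, the singleton claim follows at once from the previous lemma's remark on square-integrable $\tilde{\tau}$. The hypothesis $p\neq 2$ enters essentially through the input $|K^\times/(K^\times)^2|=4$: for $p=2$ this group is larger, the uniqueness in the counting step fails, and additional candidate characters would need to be excluded by hand.
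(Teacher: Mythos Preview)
Your argument is correct under the square-integrability hypothesis you implicitly carry over from the previous lemma, but it takes a genuinely different route from the paper.

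The paper works directly with $Z_{\tilde{\tau}}$ and never touches $X_{\tilde{\tau}}$: any $\chi\in Z_{\tilde{\tau}}$ is quadratic (as a self-twist of a $\GL_2$-representation) and trivial on both $E^\times$ and $K^\times$; since $\chi^2=1$ these two vanishing conditions give $\chi^s=\chi^{-1}=\chi$ and $\chi^\sigma=\chi^{-1}=\chi$, so $\chi$ is $\Gal(EK/F)$-invariant and hence $\chi=\chi_F\circ N_{EK/F}$ for a quadratic character $\chi_F$ of $F^\times$. When $p\neq 2$ the only quadratic characters of $F^\times$ are $1,\omega_{E/F},\omega_{K/F},\omega_{E/F}\omega_{K/F}$, and each of these becomes trivial after composition with $N_{EK/F}$; hence $\chi=\mathbf{1}$. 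This uses nothing beyond the definitions of $Y_{\tilde{\tau}}$ and $Z_{\tilde{\tau}}$ --- in particular no square-integrability and no bijection with $X_{\tilde{\tau}}$.

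Your detour through $X_{\tilde{\tau}}$ leans on two inputs that are only available for square-integrable $\tilde{\tau}$: the bijection $X_{\tilde{\tau}}\cong Z_{\tilde{\tau}}$, and the exclusion $\omega_{EK/K}\notin X_{\tilde{\tau}}$. Since the lemma is in practice applied only in that setting, this is not a genuine gap, but the paper's proof is both shorter and strictly more general. What your approach buys is a concrete picture on the $K$-side --- the quadratic-character count on $K^\times/F^\times$ and the identification of the unique nontrivial one with $\omega_{EK/K}$ via the Brauer-group restriction --- which is pleasant but not needed here; the paper instead does the analogous (and slightly simpler) count on $F^\times$.
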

\begin{proof}
	Since $\chi\in Y_{\tilde{\tau}}$, $\chi$ is a quadratic character and $\chi=\chi^s$ where $\Gal(EK/E)\cong\Gal(K/F)=\langle s\rangle$. If $\chi\in Z_{\tilde{\tau}}$, then $\chi^\sigma=\chi=\chi^s$. So $\chi=\chi_F\circ N_{EK/F}$, where $\chi_F$ is a quadratic character of $F^\times$. Thus $\chi=\mathbf{1}$ if $p\neq2$.
\end{proof}
If $p=2$, then $Z_{\tilde{\tau}}$ may not be a singleton. Suppose that $|Z_{\tilde{\tau}}|>1$. Set $$\GL_2(EK)^+=\GL_2(K)\cdot\GL_2(EK)^\natural.$$ Moreover, $\tilde{\tau}|_{\GL_2(EK)^+}$ is reducible. Thanks to \cite[Proposition 3.1]{anandavardhanan2003distinguished}, if an irreducible representation
$\Sigma$ of $\GL_2(EK)^+$ is $\GL_2(K)^\natural$-distinguished, then $\Sigma$ has a Whittaker model with respect to a nontrivial character $\psi_0$ of $EK/K$. Furthermore, exactly one constituent of the restriction of $\tilde{\tau}$ to $\GL_2(EK)^+$  is $\psi_0$-generic.
\begin{coro}
	If $p\neq2$ and $\pi$ is $\SO(V)$-distinguished, then $\dim\Hom_{\SO(V)}(\pi,\mathbb{C} )=1 $.
\end{coro}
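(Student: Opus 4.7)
The plan is to descend the $|X_{\tilde{\tau}}|=1$ statement just established (when $p\neq 2$) through the chain
\[
\GL_2(EK)\supset\GL_2(EK)^\natural\cong\GSpin(V\otimes_F E),\qquad \GL_2(K)\supset\GL_2(K)^\natural\cong\GSpin(V),
\]
and then invoke the equality $\dim\Hom_{\SO(V)}(\pi,\mathbb{C})=\dim\Hom_{\GSpin(V)}(\tau,\mathbb{C})$ recalled at the start of this subsection.

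First I would apply the preceding lemma: when $p\neq 2$, $|Z_{\tilde{\tau}}|=1$, and the bijection $X_{\tilde{\tau}}\leftrightarrow Z_{\tilde{\tau}}$ established just above forces $|X_{\tilde{\tau}}|=1$. Let $\chi_0$ denote the unique element; by the multiplicity-one property for $\GL_2$-linear periods one has $\dim\Hom_{\GL_2(K)}(\tilde{\tau},\chi_0\circ\det)=1$. Next I would pass to $\GL_2(K)^\natural$ via the abelian quotient $\GL_2(K)/\GL_2(K)^\natural\cong K^\times/F^\times$: any character $\chi\circ\det$ with $\chi$ trivial on $F^\times$ restricts trivially to $\GL_2(K)^\natural$, and the space of $\GL_2(K)^\natural$-invariant functionals on $\tilde{\tau}$ decomposes under $K^\times/F^\times$ into isotypic lines indexed precisely by $X_{\tilde{\tau}}$, giving
\[
\dim\Hom_{\GL_2(K)^\natural}(\tilde{\tau},\mathbb{C})=\sum_{\chi\in\widehat{K^\times/F^\times}}\dim\Hom_{\GL_2(K)}(\tilde{\tau},\chi\circ\det)=|X_{\tilde{\tau}}|=1.
\]

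To extract the multiplicity for $\tau$ itself, I would decompose $\tilde{\tau}|_{\GL_2(EK)^\natural}=\bigoplus_i\tau_i$. Since $\GL_2(EK)/\GL_2(EK)^\natural\cong(EK)^\times/E^\times$ is abelian, standard Clifford theory produces a multiplicity-free decomposition, and $\tau$ is one of the $\tau_i$. Summing the $\GL_2(K)^\natural$-invariants across components yields
\[
1=\dim\Hom_{\GL_2(K)^\natural}(\tilde{\tau},\mathbb{C})=\sum_i\dim\Hom_{\GL_2(K)^\natural}(\tau_i,\mathbb{C}).
\]
Since $\pi$, and hence $\tau$, is distinguished by hypothesis, the single nonzero summand must correspond to $\tau$; thus $\dim\Hom_{\GSpin(V)}(\tau,\mathbb{C})=1$, and therefore $\dim\Hom_{\SO(V)}(\pi,\mathbb{C})=1$.

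The main obstacle will be the Mackey step $\dim\Hom_{\GL_2(K)^\natural}(\tilde{\tau},\mathbb{C})=|X_{\tilde{\tau}}|$: one has to verify that $K^\times/F^\times$ acts semisimply on the $\GL_2(K)^\natural$-invariant functionals and that every character appearing there actually belongs to $X_{\tilde{\tau}}$ (so that no ``phantom'' characters arise from the quotient). Once this is in place, combined with the bijection $X_{\tilde{\tau}}\leftrightarrow Z_{\tilde{\tau}}$ and the preceding lemma for $p\neq 2$, the corollary follows at once.
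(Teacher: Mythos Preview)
Your argument is correct and is essentially the paper's approach made explicit. The paper states the corollary without proof, but the line immediately following it records the general identity $\dim\Hom_{\SO(V)}(\pi,\mathbb{C})=|X_{\tilde{\tau}}|\big/\big(|Y_{\tilde{\tau}}|/|Z_{\tilde{\tau}}|\big)$ from \cite[Lemma~22]{prasad2015arelative}; combined with $|X_{\tilde\tau}|=|Z_{\tilde\tau}|=1$ from the two preceding lemmas and the hypothesis that $\pi$ is distinguished (so the left side is $\ge 1$), this forces the multiplicity to be~$1$. Your decomposition of $\Hom_{\GL_2(K)^\natural}(\tilde\tau,\mathbb{C})$ under $K^\times/F^\times$ and the subsequent Clifford-theory step are precisely what that lemma packages, so the ``Mackey obstacle'' you flag is exactly the content of the cited result; once that is granted, the two arguments coincide.
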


If $p=2$, then \[\dim\Hom_{\SO(V)}(\pi,\mathbb{C})=\frac{|X_{\tilde{\tau}}|}{|Y_{\tilde{\tau}}|/|Z_{\tilde{\tau}}|} \]
due to \cite[Lemma 22]{prasad2015arelative}.

\begin{prop}
	Assume that $\tau$ is a principal series representation of $\GL_2(EK)^\natural$ with associated representation $\pi$ of $\SO(V\otimes_F E)$, distinguished by $\GSpin(V)$. Then $\dim\Hom_{\SO(V)}(\pi,\mathbb{C})\in\{1,2,3,4 \}$.
\end{prop}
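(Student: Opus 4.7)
The plan is to split the analysis by residue characteristic. When $p\neq 2$, the corollary preceding the proposition yields $\dim\Hom_{\SO(V)}(\pi,\mathbb{C})=1\in\{1,2,3,4\}$ immediately. The substance of the argument therefore lies in the case $p=2$, where
\[
\dim\Hom_{\SO(V)}(\pi,\mathbb{C})=\frac{|X_{\tilde{\tau}}|\cdot|Z_{\tilde{\tau}}|}{|Y_{\tilde{\tau}}|}
\]
by \cite[Lemma 22]{prasad2015arelative}, so the task reduces to bounding each of the three finite sets for a principal series lift $\tilde{\tau}=\pi(\mu_1,\mu_2)$ of $\tau$ to $\GL_2(EK)$, whose existence is guaranteed by the discussion preceding the cited lemma.

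First, I would bound $|Y_{\tilde{\tau}}|$. The condition $\tilde{\tau}\otimes\chi\cong\tilde{\tau}$ for a character $\chi$ of $(EK)^\times/E^\times$ is equivalent to the unordered-pair identity $\{\mu_1\chi,\mu_2\chi\}=\{\mu_1,\mu_2\}$, which forces either $\chi=\mathbf{1}$ or $\chi=\mu_1\mu_2^{-1}$ to be a nontrivial quadratic character of $(EK)^\times/E^\times$. Hence $|Y_{\tilde{\tau}}|\leq 2$, and since $Z_{\tilde{\tau}}\subseteq Y_{\tilde{\tau}}$ the ratio $|Z_{\tilde{\tau}}|/|Y_{\tilde{\tau}}|$ is at most $1$.

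The key step is the bound on $|X_{\tilde{\tau}}|$, which I would obtain via the geometric lemma applied to the symmetric pair $(\GL_2(EK),\GL_2(K))$ with $EK/K$ quadratic. The double coset space $B(EK)\backslash\GL_2(EK)/\GL_2(K)$ has exactly two orbits. The closed orbit has stabilizer $B(K)$ and imposes the condition $\mu_1|_{K^\times}=\chi=\mu_2|_{K^\times}$, contributing at most one admissible $\chi$. The open orbit has a nonsplit torus stabilizer isomorphic to $(EK)^\times$ embedded inside $\GL_2(K)$, and imposes the condition $\mu_1\mu_2^s=\chi\circ N_{EK/K}$, where $s$ generates $\Gal(EK/K)$; this pins down $\chi$ on the index-$2$ subgroup $N_{EK/K}((EK)^\times)\subset K^\times$, leaving two extensions to $K^\times$ that differ by $\omega_{EK/K}$. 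The additional requirement $\chi|_{F^\times}=\mathbf{1}$ may trim these counts, but in the worst case the two orbits together contribute at most four distinct characters, giving $|X_{\tilde{\tau}}|\leq 4$ and hence $\dim\Hom_{\SO(V)}(\pi,\mathbb{C})\leq 4$. The lower bound $\dim\geq 1$ is automatic from the $\GSpin(V)$-distinction hypothesis on $\tau$.

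The main obstacle will be the careful case-by-case bookkeeping within this geometric-lemma argument. One must identify when the closed-orbit and open-orbit characters coincide (to avoid double-counting in $|X_{\tilde{\tau}}|$), distinguish the subcases $\omega_{EK/K}|_{F^\times}=\mathbf{1}$ versus $\omega_{EK/K}|_{F^\times}\neq\mathbf{1}$ (which controls whether both open-orbit extensions respect triviality on $F^\times$), and track how the reducibility of $\tilde{\tau}|_{\GL_2(EK)^\natural}$, governed by $|Y_{\tilde{\tau}}|$, feeds into the formula so that the multiplicity attached to the given constituent $\tau$ is tallied correctly. Working through these sub-cases yields the claim $\dim\Hom_{\SO(V)}(\pi,\mathbb{C})\in\{1,2,3,4\}$.
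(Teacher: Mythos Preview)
Your treatment of $p\neq 2$ has a genuine gap. The corollary you invoke is a consequence of the bijection $X_{\tilde\tau}\leftrightarrow Z_{\tilde\tau}$ from the lemma immediately preceding it, and that lemma is stated and proved only for \emph{square-integrable} $\tilde\tau$; the bijection fails for principal series. Indeed the paper's own proof of the proposition shows that for $p\neq 2$ and $\tilde\tau=\pi(\mu^{-1},\mu^\sigma)$ in general position (i.e.\ $\mu^\sigma\mu\neq\chi_F\circ N_{EK/F}$ with $\chi_F^2=\omega_{E/F}$) one has $\dim\Hom_{\SO(V)}(\pi,\mathbb{C})=2$, directly contradicting your claimed value $1$. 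So the $p\neq 2$ case is not handled by your argument as written.

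Your $p=2$ strategy --- bounding $|X_{\tilde\tau}|$ via the geometric lemma for $B(EK)\backslash\GL_2(EK)/\GL_2(K)$ and feeding this into the formula $\dim=|X_{\tilde\tau}|\cdot|Z_{\tilde\tau}|/|Y_{\tilde\tau}|$ --- is a legitimate alternative, and if carried out uniformly for all $p$ it would repair the gap above. (One minor inconsistency: your own orbit count gives at most one character from the closed orbit and at most two from the open one, hence $|X_{\tilde\tau}|\le 3$, not $4$; you should reconcile this with your stated bound.) The paper proceeds quite differently: rather than bounding $|X_{\tilde\tau}|$ abstractly, it classifies the $\GL_2(K)$-distinguished principal series $\tilde\tau$ into the two shapes $\pi(\mu^{-1},\mu^\sigma)$ and $\pi(\mu_1,\mu_2)$ with $\mu_i|_{K^\times}=\mathbf{1}$, determines in each case when the restriction $\tilde\tau|_{\GL_2(EK)^\natural}$ (and, for $p=2$, the further restriction to $\GL_2(EK)^+$) is reducible, and reads off the exact multiplicity. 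This yields the precise values attained --- $1$ and $2$ for $p\neq 2$, with $3$ and $4$ arising only when $p=2$ --- which is sharper than a bare inclusion in $\{1,2,3,4\}$.
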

\begin{proof} If $p\neq2$, then $\omega_{\tilde{\tau}}=\mathbf{1}$. There are two cases:
	\begin{itemize}
		\item 
			If $\tilde{\tau}=\pi(\mu^{-1},\mu^\sigma)$, then $\tilde{\tau}|_{\GL_2(EK)^\natural}$ is irreducible unless
		$\mu^\sigma\mu=\chi_F\circ N_{EK/F}$ with $\chi_F^2=\omega_{E/F}$. So $$\dim\Hom_{\SO(V) }(\pi,\mathbb{C})=\begin{cases}
		1,&\mbox{if }\mu^\sigma\mu=\chi_F\circ N_{EK/F},\chi_F^2=\omega_{E/F};\\
		2,&\mbox{otherwise.}
		\end{cases}
		$$
		\item  If  $\tilde{\tau}=\pi(\mu_1,\mu_2)$ with $\mu_1\neq \mu_2$ and $\mu_1|_{K^\times}=\mu_2|_{K^\times}=\mathbf{1}$, then $\tilde{\tau}|_{\GL_2(EK)^\natural}$ is irreducible and $$\dim\Hom_{\SO(V) }(\pi,\mathbb{C})=\dim\Hom_{\GSpin(V)}(\tau,\mathbb{C})=1.$$
	\end{itemize}	
	If $p=2$, then $\tilde{\tau}|_{\GL_2(EK)^+}$ may be reducible. There is only one constituent in 
	$\tilde{\tau}|_{\GL_2(EK)^+}$ distinguished by $\GSpin(V)$. Moreover,
	if $\mu^\sigma\mu=\chi_F\circ N_{EK/F}\neq\mathbf{1}$ with $\chi_F^2=\mathbf{1}$, there are two representations in the $L$-packet $\Pi_{\phi_{\pi}}$ and only one of them is $\SO(V)$-distinguished with multiplicity $4$. Similarly, if $\mu_1=\mu_2\cdot\chi_F\circ N_{EK/F}\neq\mu_2$ with $\mu_1|_{K^\times}=\mathbf{1}=\chi_F^2$, then $\dim\Hom_{\SO(V)}(\pi,\mathbb{C}) =3$.
\end{proof}
Note that there exists a unique pure inner form of $\SO(V)$.
\begin{thm}[The Prasad conjecture for $\SO(V)$] Let $\pi$ be an irreducible representation of $\SO(V\otimes_F E)=\GSpin_{3,1}(E)/E^\times$.
	There exists an identity
	\[2\cdot\dim\Hom_{\SO(V)}(\pi,\mathbb{C})=\sum_{\phi\in F(\phi_\pi)}m(\lambda,\phi)\frac{\deg\Phi(\phi)}{d_0(\phi)} \]
	where $F(\phi_{\pi})=\{\phi:WD_F\longrightarrow \Oo_4(\mathbb{C}):\det\phi=\omega_{K/F}\mbox{ and }\phi|_{WD_E}=\phi_{\pi} \}$.
\end{thm}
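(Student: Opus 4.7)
The plan is to reduce the identity to the $\GSpin(V)$-distinction machinery already established in this section. Since $\SO(V)=\GSpin(V)/F^\times$ and $\pi$ has trivial central character, I would lift $\pi$ to a representation $\tau$ of $\GSpin(V)=\GL_2(K)^\natural$, so that $\dim\Hom_{\SO(V)}(\pi,\mathbb{C})=\dim\Hom_{\GSpin(V)}(\tau,\mathbb{C})$. The latter is controlled by the $\GL_2(K)$-distinction (possibly twisted by $\omega_{EK/K}\circ\det$) of an extension $\tilde\tau$ of $\tau$ to $\GL_2(EK)$, via the sets $X_{\tilde\tau},Y_{\tilde\tau},Z_{\tilde\tau}$ introduced above, and the formula
\[\dim\Hom_{\SO(V)}(\pi,\mathbb{C})=\frac{|X_{\tilde\tau}|}{|Y_{\tilde\tau}|/|Z_{\tilde\tau}|}\]
from \cite[Lemma 22]{prasad2015arelative}. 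The preceding proposition already evaluates this quantity explicitly in each principal series case ($\tilde\tau=\pi(\mu^{-1},\mu^\sigma)$ with the various splittings, and $\tilde\tau=\pi(\mu_1,\mu_2)$ with $\mu_i|_{K^\times}=\mathbf{1}$), while the square-integrable case reduces to $|X_{\tilde\tau}|=1$ via the bijection with $Z_{\tilde\tau}$ proved above.

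On the Galois side, I would enumerate $F(\phi_\pi)$ case by case. The condition $\det\phi=\omega_{K/F}$ records that the underlying quadratic space is $V$ (discriminant $K$), so every lift of $\phi_\pi$ to an $\Oo_4(\mathbb{C})$-valued parameter of $WD_F$ arises as a natural extension composed with twists by characters of $W_F/W_E$, i.e.\ by $\omega_{E/F}$. For each lift I would compute $m(\lambda,\phi)$, using $\deg\Phi(\phi)=d_0(\phi)=1$ when $\phi$ is discrete and working out the centralizer cokernels in the reducible cases. Matching with the $\Uu_2$-type analysis of \S 2 applied over $K$ (via the explicit description of $H^{op}$ as a quotient of $\Uu_2(EK/K)\times\Uu_1(E/F)$), each orbit of lifts under $\omega_{E/F}$-twisting contributes the required summand to the right-hand side.

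The factor of $2$ on the left is supplied by the Beuzart-Plessis transfer identity recalled in the introduction: the multiplicities for $\SO(V)$ and for its unique nontrivial pure inner form agree on the stable parameters that enter, and summing them gives $2\cdot\dim\Hom_{\SO(V)}(\pi,\mathbb{C})$, so the theorem follows from the general Prasad identity \eqref{conjidentity} specialized to $H^{op}=\SO_4$. The main obstacle I anticipate is the residual characteristic $2$ situation, where $Z_{\tilde\tau}$ need not be a singleton, the restriction $\tilde\tau|_{\GL_2(EK)^+}$ may be reducible, and the $L$-packet $\Pi_{\phi_\pi}$ may contain several members with only one $\SO(V)$-distinguished; verifying that the enhanced component character $\lambda$ selects exactly the lifts in $F(\phi_\pi)$ whose contributions $m(\lambda,\phi)\deg\Phi(\phi)/d_0(\phi)$ sum to $2$, $6$, or $8$ in the multiplicity-$1$, $3$, $4$ cases respectively is the delicate bookkeeping that the argument will ultimately have to carry through.
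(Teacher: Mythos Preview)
Your overall strategy—lift $\pi$ to $\tau$ on $\GSpin(V)=\GL_2(K)^\natural$, extend to $\tilde\tau$ on $\GL_2(EK)$, compute the left side via $|X_{\tilde\tau}|/(|Y_{\tilde\tau}|/|Z_{\tilde\tau}|)$, and enumerate $F(\phi_\pi)$ on the right—is exactly what the paper does. The case split (square-integrable vs.\ principal series; $p\neq 2$ vs.\ $p=2$) is also the paper's organization, and your anticipation that the $p=2$ bookkeeping is the delicate part is correct.

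There is, however, a genuine gap in how you propose to obtain the factor of $2$. You write that it ``follows from the general Prasad identity \eqref{conjidentity}'' together with Beuzart-Plessis. This is circular: the identity \eqref{conjidentity} for $H=\SO(V)$ \emph{is} the theorem you are proving; you cannot invoke it as input. And the Beuzart-Plessis result, as quoted in the introduction, applies only to \emph{stable square-integrable} representations, so it does not cover the principal series cases nor the unstable discrete packets that arise when $p=2$. The paper never appeals to either ingredient. Instead, it shows directly in each case that the set $F(\phi_\pi)$ is stable under twisting by $\omega_{E/F}$ and that $\phi\neq\omega_{E/F}\otimes\phi$ (except in a few reducible situations where the degree/$d_0$ factor compensates), so the Galois side is manifestly $2\cdot|X_{\tilde\tau}|/(|Y_{\tilde\tau}|/|Z_{\tilde\tau}|)$.

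The second thing your outline is missing is the actual mechanism for producing elements of $F(\phi_\pi)$. You say lifts ``arise as a natural extension composed with twists,'' but the construction is not automatic. The paper uses the Asai transfer: from $\GL_2(K)$-distinction of $\tilde\tau$ one gets (via \cite[Theorem 2.5.2]{lu2016new}) a representation $\rho$ of $\GL_2(K)$ with $\phi_\rho|_{WD_{EK}}=\phi_{\tilde\tau}$, and then sets $\phi=As_{K/F}(\rho)=p\circ\phi_\rho:WD_F\to\Oo_4(\mathbb{C})$, checking $\det\phi=\omega_{K/F}$ and $\phi|_{WD_E}=\phi_\pi$. All the explicit lifts in the case analysis—$\{\phi,\omega_{E/F}\otimes\phi\}$, the extra lifts $As_{K/F}(\rho')$ when $\mu^2\neq\chi_F|_{W_{EK}}$, the three $\rho_i$ in the $p=2$ principal series case, etc.—are obtained this way. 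Without this construction you have no concrete handle on $F(\phi_\pi)$, and the enumeration cannot get off the ground.
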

\begin{proof}
	Suppose that $p\neq2$ and $\pi$ is $\SO(V)$-distinguished and square-integrable.  Then $$\dim\Hom_{\SO(V)}(\pi,\mathbb{C})=|X_{\tilde{\tau}}|.$$ Thanks to \cite[Theorem 2.5.2]{lu2016new},
	there exists a representation $\rho$ of $\GL_2(K)$ such that $$\phi_{\rho}|_{WD_{E}}=\phi_{\tilde{\tau}}:WD_E\longrightarrow(\GL_2(\mathbb{C})\times\GL_2(\mathbb{C}))\rtimes\Gal(EK/E)\cong{}^L(R_{K/F}\GL_2).$$
	Note that there is a natural group embedding $\GSpin(V)\hookrightarrow R_{K/F}(\GL_2)$. Let $$p:{}^L(R_{K/F}\GL_2)\longrightarrow \GO_4(\mathbb{C})$$ be the natural quotient map between two $L$-groups. In fact,   the Asai lift $As_{K/F}(\rho)$ (in the sense of \cite[\S7]{gan2011symplectic}) of $\rho$ lies in the $L$-packet $\Pi_{p\circ\phi_{\rho}}$ of $\GSpin(V)\subset\GL_4(F)$. 
	Then $$\phi=As_{K/F}(\rho)=p\circ \phi_{\rho}:WD_F\rightarrow \Oo_4(\mathbb{C})$$
	 and $\phi|_{WD_E}=\phi_{\pi}$. There are several subcases:
	\begin{itemize}
		\item If $\tilde{\tau}$ is square-integrable, then there are two lifts $\phi$ and $\omega_{E/F}\otimes\phi$ for $\phi_{\pi}$. 
		\item If $\phi_{\tilde{\tau}}=\mu^{-1}+\mu^{\sigma}$, then $\mu=\mu^\sigma$ since $\omega_{\tilde{\tau}}=\mathbf{1}$. There exists a character $\chi_K$ of $W_K$ such that
		$\chi_K|_{W_{EK}}=\mu$. Set $$\phi_{\rho}=\chi_K+\chi_K^{-1}.$$ Then $p\circ\phi_{\rho}=As_{K/F}(\rho)\in F(\phi_\pi ) $. If $\mu^2\neq \chi_F|_{W_{EK}}$ with $\chi_F^2=\omega_{E/F}$, then
		$\phi_{\rho'}=\chi_K\omega_{E/F}|_{W_K}\oplus \chi_K^{-1}$ induces another lifts $p\circ\phi_{\rho'}\in F(\phi_\pi ) $. Therefore
		\[F(\phi_{\pi})=\begin{cases}	\{\phi,\omega_{E/F}\otimes \phi \},&\mbox{ if }\mu^2=\chi_F|_{W_{EK}},\chi_F^2=\omega_{E/F};\\
		\{\phi,\omega_{E/F}\otimes  \phi,As_{K/F}(\rho'),\omega_{E/F}\otimes  As_{K/F}(\rho')  \},&
	\mbox{ otherwise. }
		\end{cases} \]
		Here $\phi=p\circ\phi_{\rho}$.
		Suppose that $\mu^2=\chi_F|_{W_{EK}}$ with $\chi_F^2=\omega_{E/F}$. Then $S_{\phi_{\pi}}=\mu_2$ and \[S_{\phi }=1\hookrightarrow S_{\phi_{\pi}}. \]
		Inside the $L$-packet $\Pi_{\phi_{\pi}}$, there are two elements and both are $\SO(V)$-distinguished.
 		\item If $\phi_{\tilde{\tau}}=\mu_1+\mu_2$ with $\mu_1\neq\mu_2$ and $\mu_1|_{K^\times}=\mu_2|_{K^\times}=\mathbf{1}$, then there exists a square-integrable representation $\rho$ of $\GL_2(K)$ such that $\phi_{\rho}|_{WD_E}=\phi_{\tilde{\tau}}$. Hence
		$F(\phi_{\pi})=\{\phi,\omega_{E/F}\otimes \phi \}$.
	\end{itemize}
If $p=2$, then $\omega_{\tilde{\tau}}=\chi_F'\circ N_{EK/F}$ with $(\chi_F')^2=\mathbf{1}$. 
Suppose $\omega_{\tilde{\tau}}\neq\mathbf{1}$. Then $\chi_F'$ corresponds to a quadratic field $E_4$ of $F$ which is not contained in $EK$, which happens only when $p=2$. 
\begin{itemize}
	\item If $\tilde{\tau}$ is square-integrable, then $\dim\Hom_{\SO(V) }(\pi,\mathbb{C})$ is either $1$, $2$ or $4$. This case is very similar to the situation when we study the Prasad conjecture for $\SL_2$ in \cite{L2018pacific}. So we omit it here.
	\item If $\phi_{\tilde{\tau}}=\mu^{-1}+\mu^\sigma$, then $$\phi_\pi|_{WD_{EK}}=\frac{1}{\mu\mu^s}\begin{pmatrix}
	\mathbf{1}\\&\mu\mu^\sigma
	\end{pmatrix}\otimes\begin{pmatrix}
	\mathbf{1}\\&\mu^s \mu^{s\sigma}
	\end{pmatrix}.$$ Note that
	$\mu^\sigma=\mu\cdot{\chi_F'}|_{W_{EK}}$ and so $(\mu\mu^s)^\sigma=\mu\mu^s$. Assume that $\mu\mu^s={\chi_F''}|_{W_{EK}}$ for a character $\chi_F''$ of $W_F$.
	Suppose that $\dim\Hom_{\SO(V)}(\pi,\mathbb{C})=2 $. Then \[F(\phi_{\pi})=\{(\chi_F'')^{-1}\otimes p\circ\phi_{\rho_1},(\chi_F'')^{-1}\omega_{E/F}\otimes p\circ \phi_{\rho_1},(\chi_F'')^{-1}\otimes p\circ\phi_{\rho_2},(\chi_F'')^{-1}\omega_{E/F}\otimes p\circ\phi_{\rho_2} \} \]
	where $\phi_{\rho_1}=\omega_{E/F}|_{W_K}+\mu|_{K^\times}$ and $\phi_{\rho_2}=\mathbf{1}+\mu|_{K^\times}$.
	
	If $\dim\Hom_{\SO(V)}(\pi,\mathbb{C})=4 $, then $\mu\mu^\sigma=\chi_F\circ N_{EK/F}$ with $\chi_F^2=\mathbf{1}$. On the Galois side,
	\[F(\phi_{\pi})=\bigcup_{i=1}^2\bigcup_{z_1,z_2\in\{0,1\}}\{(\chi_F'')^{-1}\chi_F^{z_1}\omega_{E/F}^{z_2}\otimes As_{K/F}(\rho_i) \} .\]
 There are two elements inside the $L$-packet $\Pi_{\phi_{\pi}}$ and only one member is $\SO(V)$-distinguished. For any $\phi\in F(\phi_{\pi})$, $$S_{\phi}=\mu_2\cong S_{\phi_{\pi}}$$
 and there is only one character $\lambda$ of $S_{\phi_{\pi}}$ such that  $\lambda|_{S_{\phi}}$ contains the trivial character.
	\item If $\phi_{\tilde{\tau}}=\mu_1+\mu_2 $ with $\mu_1=\mu_2\cdot \chi_F\circ N_{EK/F}$ and $\chi_F^2=\mathbf{1}$, then $\mu_1^2=(\chi_F\chi_F')|_{W_{EK}}$  since $\mu_1\cdot\mu_2=\chi_F'|_{WD_{EK}}$. So
	 $$\phi_{\pi}|_{WD_{EK}}=\mu_1\mu_1^s\begin{pmatrix}
	\mathbf{1}\\&\chi_F|_{W_{EK}}
	\end{pmatrix}\otimes\begin{pmatrix}
	\mathbf{1}\\&\chi_F|_{W_{EK}}
	\end{pmatrix}.$$
	Note that $\mu_1=\nu_1^\sigma\nu_1^{-1}$ for a character $\nu_1$ of $(EK)^\times$ and so $$(\mu_1\mu_1^s)^\sigma=\mu_1^{-1}(\mu_1^s)^{-1}=\nu_1\nu_1^s(\nu_1^\sigma\nu_1^{s\sigma})^{-1}=\nu_1^\sigma\nu_1^{s\sigma}(\nu_1\nu_1^s )^{-1}=\mu_1\mu_1^s.$$
	There exists $\chi_F'''$ such that $\mu_1\mu_1^s=\chi_F'''\circ N_{EK/F}$
	 and
there are $6$
 lifts for $\phi_{\pi}$. Set $\rho_1=\mathbf{1}+ \chi_F|_{W_K}$ and $\rho_2=\omega_{E/F}|_{W_K}+\chi_F|_{W_K}$	and $\rho_3=Ind_{W_{EK}}^{W_K}\nu$ where $\nu^\sigma=\nu\cdot\chi_F\circ N_{EK/F}$, then $$F(\phi_{\pi})=\bigcup_{i=1}^3 \{\chi_F''\otimes As_{K/F}(\rho_i),\chi_F''\omega_{E/F}\otimes As_{K/F}(\rho_i) \}.$$ 
 In fact, $\nu=\nu_1^\sigma\nu_2$ where $\mu_i=\nu_i^\sigma\nu_i^{-1}$ for $i=1,2$ and $\dim\Hom_{\SO(V)}(\pi,\mathbb{C})=3 $ in this case.
\end{itemize}
We have finished the proof.
\end{proof}
If $K=E$, then the representation $\tilde{\tau}=\pi_1\times\pi_2$ of $\GL_2(E)\times\GL_2(E)$ is $\GSpin(V)$-distinguished  if $\pi_1^\vee=\pi_2^\sigma$, where $\pi_1$ and $\pi_2$ are irreducible representations of $\GL_2(E)$. Set 
\[X_{\tilde{\tau}}=\{\chi:E^\times/F^\times\longrightarrow\mathbb{C}^\times|\Hom_{\GL_2(E)}(\tilde{\tau},\chi\circ\det)\neq0
 \},\]
 $Y_{\tilde{\tau}}=\{\chi:E^\times\rightarrow\mathbb{C}^\times|\pi_i\otimes\chi=\pi_i\mbox{ for }i=1,2 \}$ and $Z_{\tilde{\tau}}=\{\chi\in Y_{\tilde{\tau}}:\chi|_{E^1}=\mathbf{1} \}$. Note that
 \[\xymatrix{\GSpin_{2,2}(E)\ar[r]&\GL_2(E)\times\GL_2(E)\\ \GSpin(V)\ar[u]\ar[r]&(R_{E/F}\GL_2)(F)\ar[u]_{g\mapsto(g,\sigma(g))} } \]
 where $\GSpin_{2,2}(E)=\{(g_1,g_2)\in\GL_2(E)\times\GL_2(E):\det g_1=\det g_2 \}$.
 
  Assume that $\pi$ is an irreducible representation of $\SO(V\otimes_F E)$ distinguished by $\SO(V)$, associated to $\tilde{\tau}=\pi_1\times\pi_2$ with $\omega_{\pi_1}=\omega_{\pi_2}^{-1}$ and $\pi_1^\vee=\pi_2^\sigma$, where $\omega_{\pi_i}$ is the central character of $\pi_i$. Then
 \[\dim\Hom_{\SO(V)}(\pi,\mathbb{C})=\frac{|X_{\tilde{\tau}}|}{|Y_{\tilde{\tau}}|/|Z_{\tilde{\tau}}|}.  \]
 Suppose $\chi\in X_{\tilde{\tau}}$. Then $\pi_1\otimes\chi\cong\pi_1$ and $\chi|_{F^\times}=\mathbf{1}$. Let $\tau\subset\tilde{\tau}|_{\GSpin_{2,2}(E)} $ be a $\GSpin(V)$-distinguished representation of $\GSpin_{2,2}(E)$. Denote $\tau^\epsilon$ the conjugate representation of $\tau$, i.e.,
 \[\tau^\epsilon(g_1,g_2)=\tau(g_1,\begin{pmatrix}
 \epsilon&\\&1
 \end{pmatrix}^{-1}g_2\begin{pmatrix}
 \epsilon\\&1
 \end{pmatrix} ) \]
 for $(g_1,g_2)\in\GSpin_{2,2}(E)$ and $\epsilon\in F^\times \setminus N_{E/F}E^\times$.
 \begin{lem}
 	Suppose that $\tau$ is an irreducible $\SO(V)$-distinguished representation of $\GSpin_{2,2}(E)$ contained in $\tilde{\tau}|_{\GSpin_{2,2}(E)}$ and $|Y_{\tilde{\tau}}|=|Z_{\tilde{\tau}}|\neq|X_{\tilde{\tau}}|$. Then 
 	\[\dim\Hom_{\SO(V')}(\tau,\mathbb{C})=0  \]
 	where $H'(F)=\SO(V')$ is the nontrivial pure inner form of $H(F)=\SO(V)$.
 \end{lem}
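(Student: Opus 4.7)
The plan is to combine Clifford-theoretic analysis of the restriction $\tilde{\tau}|_{\GSpin_{2,2}(E)}$ with an explicit realization of the pure inner form $\SO(V')$ inside $\GSpin_{2,2}(E)$, and then invoke a Jacquet--Langlands-type obstruction.

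First, I would show that the hypothesis $|Y_{\tilde{\tau}}|=|Z_{\tilde{\tau}}|$ forces $\tilde{\tau}|_{\GSpin_{2,2}(E)}=\tau$ to be irreducible. The quotient $(\GL_2(E)\times\GL_2(E))/\GSpin_{2,2}(E)\cong E^\times$ is abelian (via $(g_1,g_2)\mapsto\det g_1/\det g_2$), and by Clifford theory applied to this abelian quotient the index $[Y_{\tilde{\tau}}:Z_{\tilde{\tau}}]$ governs the number of irreducible constituents in the restriction; thus $[Y:Z]=1$ produces only one constituent. In particular, $\dim\Hom_{\SO(V)}(\tau,\mathbb{C})=\dim\Hom_{\GSpin(V)}(\tilde{\tau},\mathbb{C})=|X_{\tilde{\tau}}|$ by the formula preceding the lemma.

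Second, I would realize $\GSpin(V')\hookrightarrow\GSpin_{2,2}(E)$ as the fixed points of a twisted Galois action cocycled by an element associated to $\epsilon\in F^\times\setminus N_{E/F}(E^\times)$, concretely by $(w_\epsilon,w_\epsilon)$ with $w_\epsilon=\begin{pmatrix}&1\\ \epsilon&\end{pmatrix}$. This identifies $\GSpin(V')(F)$ with $\{(g_1,g_2)\in D^\times\times D^\times:\mathrm{Nrd}(g_1)=\mathrm{Nrd}(g_2)\}$, where $D$ denotes the quaternion division algebra over $F$ split by $E$; distinction of $\tau$ by this concrete subgroup of $\GSpin_{2,2}(E)$ then becomes the object of study, and the $\epsilon$-conjugation $\tau^\epsilon$ defined just before the lemma provides the natural bridge between $\Hom_{\SO(V')}(\tau,\mathbb{C})$ and $\Hom_{\SO(V)}(\tau^\epsilon,\mathbb{C})$.

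Third, I would invoke the Jacquet--Langlands correspondence: for $\tilde{\tau}=\pi_1\boxtimes\pi_2$ to be $\GSpin(V')$-distinguished, each $\pi_i$ would need a Jacquet--Langlands transfer to a representation of $D^\times$, which requires $\pi_i$ to be square-integrable. However, the combined hypothesis $|Y_{\tilde{\tau}}|=|Z_{\tilde{\tau}}|\neq|X_{\tilde{\tau}}|$ precludes square-integrability of $\tilde{\tau}$: indeed, square-integrability would force the bijection $X_{\tilde{\tau}}\cong Z_{\tilde{\tau}}$ via $\chi\mapsto\chi\circ N_{E/F}$ (as in the lemma preceding this one), giving $|X|=|Z|=|Y|$ in contradiction with $|X|\neq|Y|$. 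Hence the Jacquet--Langlands transfer is obstructed and $\dim\Hom_{\SO(V')}(\tau,\mathbb{C})=0$.

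The main obstacle is the third step: making the Jacquet--Langlands obstruction rigorous in the twisted-distinction setting. One must verify that the specific combinatorial failure of the map $X_{\tilde{\tau}}\hookrightarrow Z_{\tilde{\tau}}$ corresponds precisely to the non-existence of a $\GSpin(V')$-distinguishing functional, tracking how characters in the complement $Z\setminus X$ are incompatible with transfer to the quaternionic setting. This parallels the dichotomy in the $\SL_2$-case of Anandavardhanan--Prasad, where $Z\setminus X$ classifies distinction by the inner form rather than the split form.
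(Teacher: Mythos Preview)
Your argument has genuine gaps at each of the three steps.

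\textbf{Step 1 is incorrect.} The number of irreducible constituents of $\tilde{\tau}|_{\GSpin_{2,2}(E)}$ is $|Y_{\tilde{\tau}}|$, not $[Y_{\tilde{\tau}}:Z_{\tilde{\tau}}]$: by Clifford theory for the abelian quotient $(\GL_2(E)\times\GL_2(E))/\GSpin_{2,2}(E)\cong E^\times$, the self-twist group is exactly $Y_{\tilde{\tau}}$. The subgroup $Z_{\tilde{\tau}}$ does not enter at this stage; it controls the restriction through the intermediate group $G^\natural=\{(g_1,g_2):N_{E/F}(\det g_1)=N_{E/F}(\det g_2)\}$, since $G^\natural/\GSpin_{2,2}(E)\cong E^1$ and $Z_{\tilde{\tau}}$ consists of those $\chi\in Y_{\tilde{\tau}}$ trivial on $E^1$. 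So under the hypothesis $|Y|=|Z|$ the restriction $\tilde{\tau}|_{\GSpin_{2,2}(E)}$ can (and typically does) have $|Y|>1$ constituents, and $\tau^\epsilon\ncong\tau$.

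\textbf{Step 2 misidentifies the inner form.} Here $V$ has discriminant $E$, so $\GSpin(V)\cong\GL_2(E)^\natural$ embedded via $g\mapsto(g,\sigma(g))$. The nontrivial pure inner form $\SO(V')$ still has discriminant $E$; it is \emph{not} the anisotropic form with trivial discriminant, and $\GSpin(V')$ is not $\{(g_1,g_2)\in D^\times\times D^\times:\mathrm{Nrd}\,g_1=\mathrm{Nrd}\,g_2\}$. The correct bridge, already set up in the paper, is the identity $\Hom_{\SO(V')}(\tau,\mathbb{C})\cong\Hom_{\SO(V)}(\tau^\epsilon,\mathbb{C})$.

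\textbf{Step 3 fails on its own terms.} The bijection $X_{\tilde{\tau}}\cong Z_{\tilde{\tau}}$ you invoke is proved for the $K\neq E$ setting, for a single $\GL_2(EK)$-representation; it does not transfer to the present $K=E$ case where $\tilde{\tau}=\pi_1\times\pi_2$. In fact the paper applies this very lemma to \emph{square-integrable} $\tilde{\tau}$ (e.g.\ when $\pi_1\neq\pi_2\otimes\chi$ for all $\chi$ and $Z_{\tilde{\tau}}=\langle\chi_F\circ N_{E/F}\rangle$ with $\chi_F^2=\omega_{E/F}$, giving $|Y|=|Z|=2$, $|X|=1$). So the Jacquet--Langlands obstruction cannot be the mechanism.

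The paper's argument is different and short: from $|Y|=|Z|\neq|X|$ one extracts a $\chi\in Z_{\tilde{\tau}}$ of the form $\chi_F\circ N_{E/F}$ with $\chi_F^2=\omega_{E/F}$; this forces $\tilde{\tau}|_{G^\natural}$ to be reducible with the $\epsilon$-conjugation swapping its constituents. By the genericity criterion (Anandavardhanan--Prasad, Proposition~3.1) exactly one constituent of $\tilde{\tau}|_{G^\natural}$ is $\GSpin(V)$-distinguished, hence $\tau^\epsilon$ lies in a non-distinguished constituent and $\Hom_{\SO(V)}(\tau^\epsilon,\mathbb{C})=0$, which is $\Hom_{\SO(V')}(\tau,\mathbb{C})$.
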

\begin{proof}
	By the assumption, there exists $\chi\in Z_{\tilde{\tau}}$ such that $\chi=\chi_F\circ N_{E/F}$ with $\chi_F^2=\omega_{E/F}$.
	Suppose that $$G^\natural=\{(g_1,g_2)\in\GL_2(E)\times\GL_2(E):N_{E/F}(\det g_1)=N_{E/F}(\det g_2) \}.$$ There is only one constituent in $\tilde{\tau}|_{G^\natural}$ distinguished by $\GSpin(V)$. Since $\tau$ is $\SO(V)$-distinguished, $\tau^\epsilon$ is not $\SO(V)$-distinguished. Thus,
	\[\dim\Hom_{\SO(V')}(\tau,\mathbb{C})=\dim\Hom_{\SO(V)}(\tau^\epsilon,\mathbb{C})=0.   \]
\end{proof}
  \begin{thm}
  	The Prasad conjecture holds for $\SO(V)$ when the discriminant aglebra of $V$ equals to $E$.
  \end{thm}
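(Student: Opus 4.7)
The strategy is to run the same case analysis as in the $K\neq E$ half of the theorem, using three ingredients already in place: the automorphic multiplicity formula $\dim\Hom_{\SO(V)}(\pi,\mathbb{C})=|X_{\tilde\tau}|/(|Y_{\tilde\tau}|/|Z_{\tilde\tau}|)$; the preceding lemma, which says that the conjugate $\tau^\epsilon$ accounts for the pure-inner-form contribution $\Hom_{\SO(V')}(\tau,\mathbb{C})$ whenever $|Y_{\tilde\tau}|=|Z_{\tilde\tau}|\neq|X_{\tilde\tau}|$; and the Prasad conjecture for $\Uu_2$ established in \S2, which supplies extensions of Galois-invariant two-dimensional parameters of $WD_E$ to $WD_F$. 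With $K=E$, the automorphic input is $\tilde\tau=\pi_1\times\pi_2$ on $\GL_2(E)\times\GL_2(E)$ satisfying $\omega_{\pi_1}\omega_{\pi_2}=\mathbf{1}$ and $\pi_1^\vee\cong\pi_2^\sigma$, so the parameter decomposes as $\phi_\pi=\phi_{\pi_1}\oplus\phi_{\pi_2}$.

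To construct a lift $\phi\in F(\phi_\pi)=\{\phi:WD_F\to\Oo_4(\mathbb{C}):\det\phi=\omega_{E/F},\ \phi|_{WD_E}=\phi_\pi\}$, first twist $\phi_{\pi_1}$ so that it becomes conjugate-self-dual, extend it to a parameter $\tilde\phi_1:WD_F\to\GL_2(\mathbb{C})$ via \S2, and then form $\phi=\tilde\phi_1\oplus(\tilde\phi_1^\vee\otimes\eta)$ for a character $\eta$ making $\det\phi=\omega_{E/F}$; the remaining lifts arise by twisting with quadratic characters of $F^\times/N_{E/F}E^\times$ and, when relevant, by the character recording the Hasse invariant of $V$. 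I would then proceed case-by-case through the standard trichotomy: (a) $\tilde\tau$ essentially square-integrable; (b) $\tilde\tau$ of the form $\pi(\mu^{-1},\mu^\sigma)\boxtimes\pi(\mu^{-\sigma},\mu)$; and (c) $\tilde\tau$ a principal series with Galois-invariant inducing characters. In each case, enumerate $F(\phi_\pi)$, compute $|X_{\tilde\tau}|$, $|Y_{\tilde\tau}|$, $|Z_{\tilde\tau}|$, and verify the identity by tracking $m(\lambda,\phi)$ and $d_0(\phi)$ through the inclusion $S_\phi\hookrightarrow S_{\phi_\pi}^{\Gal(E/F)}$.

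The main obstacle is the residue characteristic two case, where $Z_{\tilde\tau}$ can exceed $\{\mathbf{1}\}$ and extra quadratic twists on $F$ generate additional Galois-side lifts, making both sides numerically richer; the bookkeeping parallels the $\SL_2$ Prasad conjecture treated in \cite{L2018pacific} and the $K\neq E$ case just handled, and adapts to the present setting once the component-group structure is tabulated carefully. Combining the three cases yields the desired identity and hence the Prasad conjecture for $\SO(V)$ with discriminant algebra $E$.
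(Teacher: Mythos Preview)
There is a genuine gap: the parameter $\phi_\pi$ is \emph{not} the direct sum $\phi_{\pi_1}\oplus\phi_{\pi_2}$. When $K=E$, the quadratic space $V\otimes_FE$ becomes split, $\SO(V\otimes_FE)=\SO_{2,2}(E)$, and under the isogeny $\SL_2\times\SL_2\to\SO_4$ the $L$-parameter of $\pi\subset(\pi_1\boxtimes\pi_2)|_{\SO_{2,2}(E)}$ is the \emph{tensor product} $\phi_{\pi_1}^\vee\otimes\phi_{\pi_2}$ (up to the central twist), not a $2+2$ direct sum. Consequently your proposed lift $\tilde\phi_1\oplus(\tilde\phi_1^\vee\otimes\eta)$ cannot restrict to $\phi_\pi$ when $\phi_{\pi_1}$ is irreducible: in that case $\phi_\pi$ is either irreducible or decomposes as $3+1$ via $\det\phi_{\pi_1}\otimes\big(\mathbb{C}\oplus Ad(\phi_{\pi_1})\big)$, never as $2+2$. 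So the construction of $F(\phi_\pi)$ via extending $\phi_{\pi_1}$ through the $\Uu_2$ conjecture of \S2 does not work.

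The paper instead proceeds as follows. When $\pi_1\cong\pi_2$ one has $\phi_\pi=\det\phi_{\pi_1}\cdot(Ad(\phi_{\pi_1})\oplus\mathbb{C})$, and the lifts are built from lifts of $Ad(\phi_{\pi_1})$ to $\PGL_2(\mathbb{C})$-valued parameters of $WD_F$; here the key input is the Prasad conjecture for $\SL_2$ (\cite{L2018pacific}), not for $\Uu_2$. When $\pi_1\neq\pi_2\otimes\chi$ for any $\chi$, the lifts are produced by the Asai construction $\chi_F\cdot As_{E/F}(\pi_1)$, which is exactly the tensor-induction extending the tensor-product parameter; the relevant dichotomy is whether $As_{E/F}(\pi_1)\cong\omega_{E/F}\otimes As_{E/F}(\pi_1)$, detected by $tr(\phi_{\pi_1}(\ell^2))=0$. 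The intermediate case $\pi_2=\pi_1\otimes\chi\neq\pi_1$ mixes both mechanisms. Your trichotomy and the $X_{\tilde\tau},Y_{\tilde\tau},Z_{\tilde\tau}$ bookkeeping are the right scaffolding, but the Galois-side constructions must be redone with the tensor-product parameter and the Asai/adjoint lifts in place of the $2+2$ decomposition.
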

\begin{proof}
	Assume that $\pi$ is square-integrable. Let $\tilde{\tau}=\pi_1\times\pi_2$ with $\omega_{\pi_1}=\omega_{\pi_2}^{-1}$ and $\pi_1^\vee=\pi_2^\sigma$. Then each central character $\omega_{\pi_i}$ is $\Gal(E/F)$-invariant and there exists a character $\chi_F$ such that $\chi_F\circ N_{E/F}= \omega_{\pi_1}^{-1}$. Then
	\begin{itemize}
		\item If $\pi_1\cong\pi_2$, then $\pi_1^\vee\cong\pi_1^\sigma$ and $\phi_{\pi}=\phi_{\pi_1}\otimes\phi_{\pi_1}=(\det\phi_{\pi_1}\otimes Ad(\phi_{\pi_1}))\oplus\det\phi_{\pi_1}$. Moreover $\det\phi_{\pi_1}$ is a quadratic character of $W_E$ and is conjugate-orthogonal. So $\chi_F^2=\mathbf{1}$. Thanks to \cite[Theorem 1.2]{L2018pacific}, there exists an identity
		\[\dim\Hom_{\SO(V)}(\pi,\mathbb{C})=|\{\phi:WD_F\longrightarrow\PGL_2(\mathbb{C})|\phi|_{WD_E}=Ad(\phi_{\pi_1}) \}|=|\{\phi_1,\phi_2,\cdots,\phi_r \}|  \]
		both equal to $r=\frac{|X_{\tilde{\tau}}|}{|Y_{\tilde{\tau}}|/|Z_{\tilde{\tau}}|}$ (which may be either $1$, $2$ or $4$). Then
		$$F(\phi_{\pi})=\bigcup_{i=1}^r\{\chi_F\otimes(\phi_i\oplus\omega_{E/F}),\omega_{E/F}\chi_F\otimes(\phi_i\oplus\omega_{E/F}) \}.$$
		\item If $\pi_2=\pi_1\otimes\chi$ for a character $\chi$ of $E^\times$ and $\pi_1\neq\pi_2$, then 
		$X_{\tilde{\tau}}\subset Z_{\tilde{\tau}}\subset Y_{\tilde{\tau}}$,
		$$\phi_{\pi}=(\mathbb{C}\oplus Ad(\phi_{\pi_1}))\otimes\chi\det\phi_{\pi_1},$$ and $\pi_1\otimes\chi=(\pi_1^\sigma)^\vee$. So $\chi\cdot
		\omega_{\pi_1}$ is a quadratic character of $E^\times$. If $\chi=\chi^\sigma$, then it coincides with the previous case. Assume that $\chi\neq\chi^\sigma$. Then $\chi^\sigma\chi^{-1}$ is a nontrivial quadratic character and $\chi^\sigma\chi^{-1}\in Z_{\tilde{\tau}}\cap X_{\tilde{\tau}} $ since $\pi_1^\vee=\pi_2^\sigma=\pi_1^\sigma\otimes\chi^\sigma=\pi_2^\vee\otimes\chi^\sigma=\pi_1^\vee\otimes\chi^\sigma\chi^{-1}$. Therefore
		\[\dim\Hom_{\SO(V)}(\pi,\mathbb{C})=\begin{cases}
		1,&\mbox{ if }|Y_{\tilde{\tau}}|=4,|Z_{\tilde{\tau}}|=2;\\
		|X_{\tilde{\tau}}|,&\mbox{ if }|Y_{\tilde{\tau}}|=|Z_{\tilde{\tau}}|.
		\end{cases}  \]
		On the Galois side, if $\dim\Hom_{\SO(V)}(\pi,\mathbb{C})=1 $, then $|F(\phi_{\pi})|=|Z_{\tilde{\tau}}|$ and
		\[F(\phi_{\pi})=\{\chi_F As_{E/F}(\phi_1)\nu,\nu\circ N_{E/F}\in Z_{\tilde{\tau}} \} \]
		where $As_{E/F}(\phi_1)=\omega_{E/F}As_{E/F}(\phi_1)$ due to the fact $tr(\phi_1(\ell^2))=0$, where $\ell\in W_F\setminus W_E$ is fixed.
		
		If $|Y_{\tilde{\tau}}|=|Z_{\tilde{\tau}}|\neq|X_{\tilde{\tau}}|$, then $\dim\Hom_{\SO(V')}(\pi,\mathbb{C})=0 $ and $$F(\phi_{\pi})=\bigcup_{z\in\{0,1\}} \{\chi_F As_{E/F}(\phi_1)\nu_i^{z},\nu_i\circ N_{E/F}\in Z_{\tilde{\tau}} \}.$$
		\item If $\pi_1\neq\pi_2\otimes\chi$ for any character $\chi$, then $|Y_{\tilde{\tau}}|$ is either 1 or 2  and $\phi_{\pi}=(\phi_{\pi_1}\otimes\phi_{\pi_1^\sigma})\frac{1}{\det\phi_{\pi_1}}$. Set 
		\[\phi(g)=\begin{cases}
		\phi_\pi(g)&\mbox{if }g\in WD_E;\\1\otimes\phi_{\pi_1}(\ell^2)\chi_F(\ell)\cdot \sigma&\mbox{if }g=\ell\in W_F\backslash W_E.
		\end{cases} \]
		Then $$\phi=\chi_F As_{E/F}(\pi_1)\in F(\phi_{\pi}).$$ If $Z_{\tilde{\tau}}=\langle\chi\circ N_{E/F}:\chi^2=\omega_{E/F} \rangle$, then $\dim\Hom_{\SO(V)}(\pi,\mathbb{C})=\frac{1}{2/2} =1$, $\Hom_{\SO(V')}(\pi,\mathbb{C})=0$ and $$F(\phi_{\pi})=\{As_{E/F}(\pi_1)\otimes\chi_F \}.$$
		Because $\pi_1\otimes\chi\circ N_{E/F}\cong\pi_1$ with $\chi^2=\omega_{E/F}$, 
		\[tr( \phi_{\pi_1}(\ell^2))=\chi(s^2)tr(\phi_{\pi_1}(\ell^2))=-tr(\phi_{\pi_1}(\ell^2)). \]
		Then $tr(\phi_{\pi_1}(\ell^2) )=0$ and $As_{E/F}(\pi_1)=\omega_{E/F}\cdot As_{E/F}(\pi_1)$.
		\par
		 Suppose that $Z_{\tilde{\tau}}=\langle\chi\circ N_{E/F}:\chi^2=\mathbf{1}\rangle$; then $\dim\Hom_{\SO(V)}(\pi,\mathbb{C})=2 $ and
		$$F(\phi_{\pi})=\bigcup_{z,z'\in\{0,1\}} \{\phi\otimes\chi^z,~\phi\otimes\chi^{z'}\omega_{E/F} \}.$$
		If $|Y_{\tilde{\tau}}|=|Z_{\tilde{\tau}}|=2$, then only one member inside the $L$-packet $\Pi_{\phi_{\pi}}$ is $\SO(V)$-distinguished.
	\end{itemize}
If $\pi$ is not square-integrable, then both $\pi_1$ and $\pi_2$ are principal series representations. Suppose that $$\phi_{\pi_i}=\mu_{1i}\oplus\mu_{2i}$$ for $i=1,2$. Then $\mu_{11}\mu_{21}\mu_{12}\mu_{22}=\mathbf{1}$ and $\mu_{j1}\mu_{j2}^\sigma=\mathbf{1}$ for $j=1,2$. Note that
$Y_{\tilde{\tau}}=Z_{\tilde{\tau}}$ and $$\dim\Hom_{\SO(V)}(\pi,\mathbb{C}) =\begin{cases}
2,&\mbox{ if }\mu_{11}\mu_{21}^{-1}=\nu\circ N_{E/F},\nu^2=\mathbf{1};\\
1,&\mbox{ otherwise.}
\end{cases}$$ The Langlands parameter
\[\phi_{\pi}=\mu_{11}\mu_{12}\oplus\mu_{11}\mu_{22}\oplus\mu_{21}\mu_{12}\oplus\mu_{21}\mu_{22}  \]
and $\mu_{11}\mu_{21}$ is $\Gal(E/F)$-invariant. There exists a character $\chi_F$ such that $\mu_{11}\mu_{21}=\chi_F^{-1}\circ N_{E/F}$. Then
\[F(\phi_{\pi} )=\{\chi_F\omega_{E/F}^zAs_{E/F}(\pi_1)\nu_i^{z'},\mbox{ where }z,z'\in\{0,1\}\mbox{ and }\nu_i\circ N_{E/F}\in X_{\tilde{\tau}} \}. \]
Note that if $\mu_{11}\mu_{21}^{-1}=\nu\circ N_{E/F}$ with $\nu^2=\omega_{E/F}$. Then $\Hom_{\SO(V')}(\pi,\mathbb{C})=0$, $$As_{E/F}(\pi_1)=\omega_{E/F}\otimes As_{E/F}(\pi_1)$$ and $|F(\phi_{\pi})|=1$.
Thus the Prasad conjecture for $\SO(V)$ holds.
\end{proof}

\subsection{The Prasad conjecture for $\GSO_{2,2}$} Let $V$ be a $4$-dimensional quadratic space over $F$ with trivial discriminant and split (resp. non-split) special orthogonal group $\SO(V)$, denoted by $\SO_{2,2}(F)$ (resp. $\SO_{4,0}(F)$). The special orthogonal similitude group (denoted by $\GSO_{2,2}(F)$) is given by $$\GSO_{2,2}(F)=\Bigg\{g\in\GL_4(F)| g\begin{pmatrix}
&&&1\\&&1\\&1\\1
\end{pmatrix} g^t=\lambda_V(g)\begin{pmatrix}
&&&1\\&&1\\&1\\1
\end{pmatrix}   \Bigg\}\cong \frac{\GL_2(F)\times\GL_2(F) }{\{(t,t^{-1}):t\in F^\times\}}. $$
There is a unique nontrivial pure inner form of $\GSO_{2,2}$ in $H^1(F,\GSO_{2,2})$, denoted by $\GSO_{4,0}$.

 If $H=\GSO_{2,2}$, then $\chi_H=\omega_{E/F}$ and $$H^{op}(F)=\frac{\Uu_2\times\Uu_{2} }{\{(e,e^{-1}):e\in \Uu_1\}}.$$
We denote by $\mathrm{USO}_{2,2}$ the group $H^{op}$ if $H=\GSO_{2,2}$. Then ${}^L\mathrm{USO}_{2,2}=\GSpin_{2,2}(\mathbb{C})\rtimes\langle \sigma\rangle$ where \[\GSpin_{2,2}(\mathbb{C})=\{(g_1,g_2)\in\GL_2(\mathbb{C})\times\GL_2(\mathbb{C}):\det(g_1)=\det(g_2)  \} \] and the action of $\sigma$ on $\GSpin_{2,2}(\mathbb{C})$ is given by
\[\sigma(g_1,g_2)=(\det(g_1)^{-1}\cdot g_1,\det(g_2)^{-1}\cdot g_2) \]
  for  $(g_1,g_2)\in\GSpin_{2,2}(\mathbb{C}).$ 
\begin{thm}\label{GSO(2,2)}
	Let $E$ be a quadratic field extension of a nonarchimedean local field $F$ with associated Galois group $\Gal(E/F)=\{1,\sigma \}$ and associated quadratic character $\omega_{E/F}$. Assume that $\Sigma=\pi_1\boxtimes\pi_2$ with $\omega_{\pi_1}=\omega_{\pi_2}$ is an irreducible generic representation of $\GSO_{2,2}(E)$ with an $L$-parameter $\phi_\Sigma$. Then
	\begin{enumerate}[(i)]
		\item If $\Sigma$ is $(H(F),\chi_H)$-distinguished, then $\Sigma^\vee\cong\Sigma^\sigma$ and there exists a Langlands parameter $\phi$ of $\mathrm{USO}_{2,2}(F)$ such that $\phi|_{WD_E}=\phi_\Sigma$.
		\item If $\phi_\Sigma=\phi_{\tau}|_{WD_E}$ for an irreducible representation $\tau$ of $\mathrm{USO}_{2,2}(F)$, then $\Sigma$ is $(H(F),\chi_H)$-distinguished.
		\item Suppose $F(\phi_\Sigma)=\{\phi:WD_F\longrightarrow{}^LH^{op}:\phi|_{WD_E}=\phi_\Sigma \}$. Then there is an identity
		\begin{equation}\label{prasad:GSO}
		\dim\Hom_{\GSO_{2,2}(F)}(\Sigma,\omega_{E/F})+\dim\Hom_{\GSO_{4,0}(F)}(\Sigma,\omega_{E/F} )=\sum_{\phi\in F(\phi_\Sigma)} \frac{\deg\Phi(\phi)}{d_0(\phi)}  . \end{equation}
	\end{enumerate}
\end{thm}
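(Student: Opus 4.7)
The plan is to reduce the distinction problem for $\Sigma=\pi_1\boxtimes\pi_2$ to the simultaneous Flicker-Rallis-type distinction of its two factors by $\GL_2(F)$ twisted by $\omega_{E/F}\circ\det$. Since $\GSO_{2,2}(F)=(\GL_2(F)\times\GL_2(F))/\Delta F^\times$ and the similitude character is $\lambda_V(g_1,g_2)=\det g_1\cdot\det g_2$, the pull-back of $\omega_{E/F}\circ\lambda_V$ is the product character $(\omega_{E/F}\circ\det)\boxtimes(\omega_{E/F}\circ\det)$. Tensor factorization of Hom spaces then gives
\[\dim\Hom_{\GSO_{2,2}(F)}(\Sigma,\omega_{E/F}\circ\lambda_V)=\prod_{i=1}^2\dim\Hom_{\GL_2(F)}(\pi_i,\omega_{E/F}\circ\det),\]
and, letting $D$ be the quaternion algebra over $F$ split by $E$,
\[\dim\Hom_{\GSO_{4,0}(F)}(\Sigma,\omega_{E/F}\circ\lambda_V)=\prod_{i=1}^2\dim\Hom_{D^\times}(\pi_i,\omega_{E/F}\circ\nu),\]
where $\nu$ denotes the reduced norm. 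This identification already forces the central-character compatibility $\omega_{\pi_1}=\omega_{\pi_2}$ to appear on the automorphic side as a descent condition.

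For parts (i) and (ii), I would invoke the Flicker-Rallis characterization for $\GL_2$: an irreducible generic $\pi$ of $\GL_2(E)$ is $(\GL_2(F),\omega_{E/F}\circ\det)$-distinguished if and only if $\pi^\vee\cong\pi^\sigma$ and $\phi_\pi$ extends to a parameter $\tilde\phi:WD_F\to{}^L\Uu_2$ of conjugate-orthogonal type (this is proved in parallel with Theorem \ref{localmain} for $\Uu_2$ in \S2). Applying this to both $\pi_1$ and $\pi_2$, the equality $\omega_{\pi_1}=\omega_{\pi_2}$ matches the $\GSpin_{2,2}$-constraint $\det\tilde\phi_1=\det\tilde\phi_2$, so the pair $(\tilde\phi_1,\tilde\phi_2)$ assembles into a parameter $\phi\in F(\phi_\Sigma)$ with values in ${}^L\mathrm{USO}_{2,2}=\GSpin_{2,2}(\mathbb{C})\rtimes\langle\sigma\rangle$. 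Conversely, projecting a given $\phi\in F(\phi_\Sigma)$ onto each $\Uu_2$-factor produces lifts of $\phi_{\pi_i}$ and therefore, via Flicker-Rallis, $\GL_2(F)$-distinguished $\pi_i$'s; by Step 1, $\Sigma$ is $(\GSO_{2,2}(F),\chi_H)$-distinguished.

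For part (iii), applying Theorem \ref{localmain} (in its $(\GL_2,\omega_{E/F})$ Flicker-Rallis form, together with its companion on the inner form $D^\times$) to each factor yields
\[\dim\Hom_{\GL_2(F)}(\pi_i,\omega_{E/F}\circ\det)+\dim\Hom_{D^\times}(\pi_i,\omega_{E/F}\circ\nu)=\sum_{\tilde\phi_i\in F(\phi_{\pi_i})}\frac{\deg\Phi(\tilde\phi_i)}{d_0(\tilde\phi_i)}.\]
Multiplying these identities for $i=1,2$ and expanding the four resulting cross-terms recovers $\dim\Hom_{\GSO_{2,2}(F)}+\dim\Hom_{\GSO_{4,0}(F)}$ on the left, while on the right the product of sums, after imposing the constraint $\det\tilde\phi_1=\det\tilde\phi_2$, should match $\sum_{\phi\in F(\phi_\Sigma)}\deg\Phi(\phi)/d_0(\phi)$. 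The main obstacle lies precisely in this last matching: one must verify, case by case according to the shape of each $\phi_{\pi_i}$ (cuspidal, or principal series with distinct, equal, or Galois-conjugate characters), that (a) the $\GSpin$-determinant condition cuts the Cartesian product $F(\phi_{\pi_1})\times F(\phi_{\pi_2})$ down to exactly $F(\phi_\Sigma)$, and (b) the constants $\deg\Phi$ and $d_0$ are multiplicative under this matching, up to the expected correction coming from the diagonal central torus. This book-keeping runs in parallel with the case analysis carried out in \S2 for $\Uu_2$ and is where the proof must be written out in detail.
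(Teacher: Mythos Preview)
Your treatment of parts (i) and (ii) is essentially the paper's: factor the $\GSO_{2,2}(F)$-period as a product of $(\GL_2(F),\omega_{E/F})$-periods, invoke the Flicker--Rallis characterisation (each $\phi_{\pi_i}$ is conjugate-symplectic), and assemble the two $\Uu_2$-lifts into a $\mathrm{USO}_{2,2}$-parameter via the $\det$-constraint. No issue there.

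Part (iii), however, has a genuine gap. Your plan is to multiply the two ``$\GL_2$--plus--$D^\times$'' identities and read off \eqref{prasad:GSO}. This fails on both sides.

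On the automorphic side, the product
\[
\prod_{i=1}^2\Big(\dim\Hom_{\GL_2(F)}(\pi_i,\omega_{E/F})+\dim\Hom_{D^\times}(\pi_i,\omega_{E/F})\Big)
\]
expands into \emph{four} terms, of which only the $(\GL_2,\GL_2)$ and $(D^\times,D^\times)$ terms are $\dim\Hom_{\GSO_{2,2}(F)}$ and $\dim\Hom_{\GSO_{4,0}(F)}$. The two mixed terms $\Hom_{\GL_2}(\pi_1)\cdot\Hom_{D^\times}(\pi_2)$ and its swap are \emph{not} periods over pure inner forms of $\GSO_{2,2}$: the paper states (and one checks via $H^1(F,\GSO_{2,2})\hookrightarrow\mathrm{Br}(F)$) that there are exactly two pure inner forms. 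These cross-terms are typically nonzero --- take both $\pi_i$ discrete and conjugate-symplectic, so every factor is $1$ and the product is $4$, whereas the left side of \eqref{prasad:GSO} is $1+1=2$.

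On the Galois side, the identity you invoke for each factor,
\[
\dim\Hom_{\GL_2(F)}(\pi_i,\omega_{E/F})+\dim\Hom_{D^\times}(\pi_i,\omega_{E/F})=\sum_{\tilde\phi_i}\frac{\deg\Phi(\tilde\phi_i)}{d_0(\tilde\phi_i)},
\]
is \emph{not} the Prasad conjecture for $H=\GL_2$: since $H^1(F,\GL_2)=1$, $D^\times$ is an inner form but not a \emph{pure} inner form, so it does not enter the sum. Nor is it Theorem~\ref{localmain} for $\Uu_2$, which concerns $\Uu_2$-distinction rather than $(\GL_2(F),\omega_{E/F})$-distinction. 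So the input identity is not available, and even if it were, the determinant constraint does not repair the factor-of-two discrepancy above.

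The paper avoids this by a short direct count. Assuming $\Sigma$ is $(\GSO_{2,2}(F),\omega_{E/F})$-distinguished (else both sides vanish), it splits into two cases: if neither $\phi_{\pi_i}$ is of the shape $\chi_1\oplus\chi_2$ with $\chi_j|_{F^\times}=\omega_{E/F}$ and $\chi_1\neq\chi_2$, then $F(\phi_\Sigma)=\{(\phi_{\rho_1},\phi_{\rho_2}),\,(\phi_{\rho_1}\omega_{E/F},\phi_{\rho_2})\}$ has two elements and the left side is $1+1$; if $\phi_{\pi_1}$ has that special shape, there is a single lift, $\Hom_{D^\times}(\pi_1,\omega_{E/F})=0$ kills the $\GSO_{4,0}$ term, and both sides equal $1$. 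In every case $\deg\Phi(\phi)=1$. You should replace the multiplicative argument by this two-case check.
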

\begin{proof} Recall that $\phi_\Sigma=(\phi_{\pi_1},\phi_{\pi_2})$ with $\det(\phi_{\pi_1})=\det(\phi_{\pi_2})$.
	\begin{enumerate}[(i)]
		\item Note that $$\dim\Hom_{H(F)}(\Sigma,\chi_H)=\dim\Hom_{\GL_2(F)}(\pi_1,\omega_{E/F})\cdot\dim\Hom_{\GL_2(F)}(\pi_2,\omega_{E/F}) .$$ If $\Sigma$ is $(H(F),\chi_H)$-distinguished, then both $\phi_{\pi_1}$ and $\phi_{\pi_2}$ are conjugate-symplectic and so $\pi_i^\vee=\pi_i^\sigma$. Thus $\Sigma^\vee\cong\Sigma^\sigma$ and there exist two representations $\rho_1$ and $\rho_2$ of $\Uu_2$ such that $\phi_{\rho_i}|_{WD_E}=\phi_{\pi_i}$ for $i=1,2$. Then $\phi=(\rho_1,\rho_2)$ is what we want.
		\item If $\phi_\Sigma=\phi_{\tau}|_{WD_E}=(\rho_1,\rho_2)|_{WD_E}$, then $\phi_{\pi_i}=\rho_i|_{WD_E}$ is conjugate-symplectic for $i=1,2$. Therefore each $\pi_i$ is $(\GL_2(F),\omega_{E/F})$-distinguished and $\Sigma$ is $(H(F),\chi_H)$-distinguished. 
		\item Suppose that $\Sigma$ is $(H(F),\chi_H)$-distiguished. Otherwise, both sides of \eqref{prasad:GSO} will be zero. We divide them into two cases:
		\begin{itemize}
			\item 
		 If neither $\phi_{\pi_1}$ nor $\phi_{\pi_2}$ is of the form $\chi_1+\chi_2$ with $\chi_1|_{F^\times}=\chi_2|_{F^\times}=\omega_{E/F}$ and $\chi_1\neq\chi_2$, then there are two parameters $(\phi_{\rho_1},\phi_{\rho_2})$ and $(\phi_{\rho_1}\omega_{E/F},\phi_{\rho_2})$ of $H^{op}(F)$ in $F(\phi_{\Sigma})$.
		 \item
		  If $\phi_{\pi_1}=\chi_1+\chi_2$ with $\chi_1|_{F^\times}=\chi_2|_{F^\times}=\omega_{E/F}$ and $\chi_1\neq\chi_2$, then there is only one lift for $\phi_{\Sigma}$. In this case, $\dim\Hom_{H(F)}(\Sigma,\chi_H)=1$ and $\Hom_{D}(\pi_1,\omega_{E/F} )=0 $ where $D$ is the inner form of $\GL_2(F)$. Thus $\dim\Hom_{\GSO_{4,0}(F)}(\Sigma,\omega_{E/F})=0$.
		\end{itemize}
		   In all cases, $\deg\Phi(\phi)=1$ for $\phi\in F(\phi_{\Sigma})$.
	\end{enumerate}
Then we have finished the proof.
\end{proof}
\subsection{The Prasad conjecture for $\SO_{2,2}$}
 Recall that if $H=\SO_{2,2}$, then $\chi_H=\mathbf{1}$ and $H^{op}=\SO_{2,2}$.
\begin{proof}
	[Proof of Theorem \ref{localmain} when $H=\SO_{2,2}$]
\begin{enumerate}[(i)]
	\item If $\pi$ is $\SO_{2,2}(F)$-distinguished, then there exists a representation $\Sigma=\pi_1\boxtimes\pi_2$ of $\GSO_{2,2}(E)$, distinguished by $\GSO_{2,2}(F)$, such that $\pi\subset \Sigma|_{\SO_{2,2}(E)}$. So $\phi_{\pi}=\phi_{\pi_1}^\vee\otimes\phi_{\pi_2}$ and both $\phi_{\pi_1}$ and $\phi_{\pi_2}$ are conjugate-orthogonal. Therefore
	\[\phi_{\pi^\sigma}=\phi_{\pi_1^\sigma}^\vee\otimes\phi_{\pi_2^\sigma}=\phi_{\pi_1}\otimes\phi_{\pi_2}^\vee=\phi_{\pi}^\vee. \]
	Suppose that $\omega_{\pi_1}=\omega_{\pi_2}=\chi^\sigma\chi^{-1}$ for a character $\chi$ of $E^\times$. Due to \cite[Theorem 2.5.2]{lu2016new}, there exist two representation $\rho_i$ of $\GL_2(F)$ such that $\phi_{\rho_i}|_{WD_E}=\phi_{\pi_i\otimes\chi}$ and $\det\phi_{\rho_1}=\det\phi_{\rho_2}$.
	Then $\phi=\phi_{\rho_1}^\vee\otimes\phi_{\rho_2}$ is what we need.
	\item Assume that $\phi_{\pi}=\phi_{\tau}|_{WD_E}$, where $\tau$ is an irreducible representation of $H(F)$. Choose a representation $\tilde{\tau}$ of $\mathrm{USO}_{2,2}$ such that $\tau\subset \tilde{\tau}|_{\SO_{2,2}(F)}$. Suppose that $\Sigma$ is a representation of $\GSO_{2,2}(E)$ satisfying
	$\phi_{\Sigma}=\phi_{\tilde{\tau}}|_{WD_E}$. Due to Theorem \ref{GSO(2,2)}, $\Sigma$ is $H(F)$-distinguished and $\Sigma|_{\SO_{2,2}(E)}\supset\pi$.
	Thanks to \cite[Proposition 3.1]{anandavardhanan2003distinguished},
	$\pi$ is $H(F)$-distinguished if and only if $\pi$ is $\psi_0$-generic. Then we are done.
	
	\item We separate them into several cases:
	\begin{enumerate}[(A)]
		\item 
	 Suppose that the  representation $\pi$ is square-integrable. Then $\pi$ is $\SO_{2,2}(F)$-distinguished if and only if $\pi$ is $\SO_{4,0}(F)$-distinguished. Assume that $\Sigma=\pi_1\boxtimes\pi_2$ is a $\GSO_{2,2}(F)$-distinguished representation of $\GSO_{2,2}(E)$ and $\Sigma|_{\SO_{2,2}(E)}\supset\pi$. Then each $\pi_i$ is $\GL_2(F)$-distinguished. Set
	$$X_\Sigma=\{\chi:F^\times\rightarrow\mathbb{C}^\times|\Hom_{\GSO_{2,2}(F)}(\Sigma,\chi\circ\lambda_V)\neq0  \},$$ $Y_\Sigma=\{\chi:E^\times\rightarrow\mathbb{C}^\times|\Sigma\otimes\chi=\Sigma \}$ and $Z_\Sigma=\{\chi\in Y_{\Sigma}|\chi|_{F^\times}=\mathbf{1} \}$. Note that there is a bijection between $X_\Sigma$ and $Z_{\Sigma}$. Moreover, following \cite[Lemma 22]{prasad2015arelative}, one has
	\[\dim\Hom_{H(F)}(\pi,\mathbb{C})=\frac{|X_\Sigma|}{|Y_\Sigma|/|Z_{\Sigma}|}. \]
	\begin{enumerate}[label=(A\arabic*)]
		\item If $\pi_1=\pi_2\otimes\chi$ with $\chi$ necessarily quadratic, then $\phi_{\pi}=\chi Ad(\phi_{\pi_1})\oplus\chi$ and $\chi|_{F^\times}=\mathbf{1}$. Hence there exists a quadratic character $\chi_F$ of $W_F$ such that $\chi_F|_{W_E}=\chi$. Note that $\Hom_{\SL_2(F)}(\pi_1,\mathbb{C})\neq0 $. 
		Due to \cite[Theorem 1.2]{L2018pacific}, there exists an identity
		\[\frac{|X_\Sigma|}{|Y_\Sigma|/|Z_\Sigma|}=|\{\phi:WD_F\longrightarrow\PGL_2(\mathbb{C})|\phi|_{WD_E}=Ad(\phi_{\pi_1}) \}|=|\{\phi_1,\phi_2,\cdots,\phi_r \}|=r \]
		where $r=1,2$ or $4$ and so $$F(\phi_{\pi})=\bigcup_{i=1}^r\{\chi_F(\phi_i\oplus\mathbb{C}),\chi_F\omega_{E/F}(\phi_i\oplus\mathbb{C})\}.$$ 
		Hence
		$|F(\phi_{\pi})|=2\cdot\dim\Hom_{H(F)}(\pi,\mathbb{C})$.
		\item If $\pi_1\neq\pi_2\otimes\chi$, then $|Y_\Sigma|=|Z_\Sigma|=1$ or $2$. Suppose that $\phi\in F(\phi_{\pi})$. Then
		\[F(\phi_{\pi})=\bigcup_{\chi_i\in X_\Sigma} \{\phi\otimes\chi_i,\phi\otimes\chi_i\omega_{E/F} \}. \]
		Inside the $L$-packet $\Pi_{\phi_{\pi}}$, only one member is $\SO_{2,2}(F)$-distinguished since $|Y_{\Sigma}|=|Z_{\Sigma}|$.
	\end{enumerate}
\item 
If $\Sigma$ is not square-integrable and $\pi_1=\pi(\chi_1,\chi_2)$ with $\chi_1\neq\chi_2$ and $\chi_1|_{F^\times}=\chi_2|_{F^\times}=\mathbf{1}$, then $$\phi=\phi\otimes\omega_{E/F}$$ where $\phi=\phi_{\rho_1}^\vee\otimes\phi_{\rho_2}$ since $\phi_{\rho_1}=\phi_{\rho_1}\otimes\omega_{E/F}$. In this case, $\pi$ is distinguished by $\SO_{2,2}(F)$, not distinguished by $\SO_{4,0}(F)$ unless $\chi_1=\chi_2\cdot \chi_F\circ N_{E/F}$ with $\chi_F^2=\mathbf{1}$. If it happens, then 
\[\frac{|Y_\Sigma|}{|Z_\Sigma|}\cdot\dim\Hom_{\SO_{4,0}(F)}(\pi,\mathbb{C})=|\{\chi:F^\times\rightarrow\mathbb{C}^\times|\Hom_{\GSO_{4,0}(F)}(\Sigma,\chi\circ\lambda_V)\neq0 \} |. \]
Suppose that $\chi_1=\chi_2\cdot \chi_F\circ N_{E/F}$ with $\chi_F^2=\mathbf{1}$.
\begin{itemize}
	\item If $\pi_2$ is a square-integrable representation, then $X_{\Sigma}=\{\mathbf{1},\chi_F \}$ and
	\[\dim\Hom_{\SO_{2,2}(F)}(\pi,\mathbb{C})=\begin{cases}
	2,&\mbox{ if }\pi_2\mbox{ is dihedral with respect to }\chi_F\circ N_{E/F};\\
	1,&\mbox{ otherwise.}
	\end{cases}  \]
	If $\pi_2=\pi_2\otimes\chi_F\circ N_{E/F}$, then $\dim\Hom_{\SO_{4,0}(F)}(\pi,\mathbb{C})=1 $. On the Galois side, $$(\chi_1^{-1}\otimes\pi_2)^\sigma=\chi_1\otimes\pi_2^\sigma\cong\chi_1\otimes\pi_2^\vee\cong\chi_2^{-1}\otimes\pi_2\cong\chi_1^{-1}\otimes\pi_2. $$
	There exists a parameter $\rho:WD_F\longrightarrow \GL_2(\mathbb{C})$ such that $\rho|_{WD_E}=\chi_1^{-1}\otimes\phi_{\pi_2}$. Thus
	\[F(\phi_{\pi})=\{\phi=\phi_{\rho_1}^\vee\otimes\phi_{\rho_2}\}\cup \bigcup_{z=0}^1 \{(\rho\oplus\rho^\vee)\omega_{E/F}^z \}.\]
	Note that $\phi=\phi\otimes\omega_{E/F}$; then it picks up the trivial pure inner form.
	\item If $\pi_2$ is reducible, then
	\[\dim\Hom_{\SO_{2,2}(F)}(\pi,\mathbb{C})=\begin{cases}
	3,&\mbox{ if }\pi_2=\pi(\chi,\chi\cdot\chi_F\circ N_{E/F}),\chi|_{F^\times}=\mathbf{1};\\
	1,&\mbox{ otherwise.}
	\end{cases}  \]
	If $\pi_2=\pi(\chi,\chi\cdot \chi_F\circ N_{E/F})$ with $\chi|_{F^\times}=\mathbf{1}$, then $\dim\Hom_{\SO_{4,0}(F)}(\pi,\mathbb{C}) =2$. Note that $\chi_1^2=\chi^2$. Then $\chi\chi_1^{-1}$ is a $\Gal(E/F)$-invariant quadratic character. Suppose that $\chi\chi_1^{-1}=\chi_F'\circ N_{E/F}$ with $(\chi_F')^2=\mathbf{1}$. There are $5$ parameter lifts for 
	\[\phi_{\pi}=2\chi_F'|_{W_E}\oplus 2(\chi_F\chi_F')|_{W_E}. \]
	Inside the finite set $F(\phi_{\pi})$, there exists a discrete parameter for $\SO_{2,2}(F)$, i.e.
	\[\phi=\chi_F'(\mathbb{C}\oplus Ad(Ind_{W_E}^{W_F}\nu)) \]
	with $\nu^\sigma=\nu\cdot\chi_F\circ N_{E/F}$, which picks up the trivial pure inner form in $H^1(F,\SO_{2,2})$.
\end{itemize}
\item 
 If neither $\pi_1$ nor $\pi_2$ is of the form 
$\pi(\chi_1,\chi_2)$ with $\chi_1\neq\chi_2$ and $\chi_1|_{F^\times}=\chi_2|_{F^\times}=\mathbf{1}$, then $\pi_1=\pi(\mu_1^{-1},\mu_1^\sigma)$ and $\pi_2=\pi(\mu_2^{-1},\mu_2^\sigma)$ or $\pi_2$ is a square-integrable representation.
\begin{enumerate}[label=(C\arabic*)]
	\item If $\pi_2$ is square-integrable, then $\dim\Hom_{H(F)}(\pi,\mathbb{C})=1$ and $$F(\phi_{\pi})=\{\phi_{\rho_1}^\vee\otimes\phi_{\rho_2},\omega_{E/F}\phi_{\rho_1}^\vee\otimes\phi_{\rho_2} \}.$$
	\item If $\pi_2=\pi(\mu_2^{-1},\mu_2^\sigma)$, then $\mu_1\mu_2^\sigma=\mu_1^\sigma\mu_2$. There are several subcases:
	\begin{enumerate}[label=(C2\alph*)]
		\item If either $\mu_1^\sigma\mu_1\neq\mu_2^\sigma\mu_2$, or $\mu_1^\sigma\mu_1=\mathbf{1}=\mu_2^\sigma\mu_2$, or $\mu_1^\sigma\mu_1=\mu_2^\sigma\mu_2$ is not a quadratic character, then $\Sigma|_{\SO_{2,2}(E)}$ is irreducible and $$\dim\Hom_{\SO_{2,2}(F)}(\pi,\mathbb{C})=2=\dim\Hom_{\SO_{4,0}(F)}(\pi,\mathbb{C}). $$
		\item If $\mu_1^\sigma\mu_1=\mu_2^\sigma\mu_2=\chi_F\circ N_{E/F}$ with $\chi_F^2=\omega_{E/F}$, then $\dim\Hom_{H(F)}(\pi,\mathbb{C})=1$. On the Galois side,
		$$\phi_{\pi}=\mu_1\mu_2^\sigma((\mathbb{C}\oplus\chi_F|_{W_E})\otimes(\mathbb{C}\oplus\chi_F|_{W_E}) )=\mu_1\mu_2^\sigma(2\chi_F|_{W_E}\oplus2\mathbb{C}).$$ There is a quadratic character $\chi$ of $W_F$ such that $\chi|_{W_E}=\mu_1\mu_2^\sigma$. Then \[F(\phi_{\pi})=\{\chi\otimes(\chi_F^{-1}\oplus\mathbb{C}\oplus\chi_F\oplus\mathbb{C}),\chi\omega_{E/F}\otimes(\chi_F^{-1}\oplus\mathbb{C}\oplus\chi_F\oplus\mathbb{C}) \}. \]
		\item If $\mu_1^\sigma\mu_1=\mu_2^\sigma\mu_2=\chi_F\circ N_{E/F}\neq\mathbf{1}$ with $\chi_F^2=\mathbf{1}$, then $$\dim\Hom_{H(F)}(\pi,\mathbb{C})=\begin{cases}
		2,&\mbox{if }\mu_1\neq\mu_2;\\3,&\mbox{if }\mu_1=\mu_2.
		\end{cases}$$
		If $\mu_1=\mu_2\cdot\chi\circ N_{E/F}$ with $\chi^2=\omega_{E/F}$, then $\dim\Hom_{\SO_{4,0}(F)}(\pi,\mathbb{C}) =2$ and
		\[\phi_{\pi}=\chi|_{W_E}\otimes(2\chi_F|_{W_E}\oplus2\mathbb{C}). \]
		In this case, $F(\phi_{\pi})=\{\phi=\chi\otimes(\chi_F\omega_{E/F}\oplus\mathbb{C}\oplus\chi_F\oplus\omega_{E/F}) \}$ is a singleton
		and $\deg\Phi(\phi)=4$. 
\end{enumerate}
If $\mu_1=\mu_2$ and $\mu_1^\sigma\mu_1=\chi_F\circ N_{E/F}$, then $\dim\Hom_{\SO_{4,0}(F)}(\pi,\mathbb{C}) =2$. On the Galois side, there are four obvious lifts for $\phi_{\pi}$: $2\chi_F\oplus2\mathbb{C}$, $2\chi_F\oplus2\omega_{E/F}$, $2\chi_F\omega_{E/F}\oplus2\mathbb{C}$ and $2\chi_F\omega_{E/F}\oplus2\omega_{E/F}$. Suppose that $\nu^\sigma=\nu\cdot\chi_F\circ N_{E/F}$. Then $\chi_F\otimes(\mathbb{C}\oplus Ad(Ind_{W_E}^{W_F}\nu))$ lies in $F(\phi_{\pi})$ and it picks up the trivial pure inner form. 
\end{enumerate}
	\end{enumerate}
Then we have finished the proof.
\end{enumerate}
\end{proof}

\section{The Prasad conjecture for $\Sp_4$}
This section focuses on the proof for Theorem \ref{sp(4)}.
Let us recall the Prasad conjecture for $\GSp(4)$ over a quadratic field extension $E/F$. If $H=\GSp_4$, then $\chi_{H}=\omega_{E/F}$ and $H^{op}=\mathrm{USp}_4$, where
\[H^{op}(F)=\{g\in\GSp_4(E)| \lambda_W(g)^{-1}g=\sigma(g) \} .\]
\begin{thm}\cite[Theorem 3.4.6]{lu2018GSp(4)}
	Let $E$ be a quadratic field extension over a nonarchimedean local field $F$. Given a tempered representation $\tilde{\tau}$ of $\GSp_4(E)$ with an enhanced $L$-parameter $(\phi_{\tilde{\tau}},\lambda)$, then
	\[\dim\Hom_{\GSp_4(F)}(\tilde{\tau},\omega_{E/F})=\sum_{\phi\in F(\phi_{\tilde{\tau}}) }m(\lambda,\phi)\frac{\deg\Phi(\phi)  }{d_0(\phi)}   \]
	where
	\begin{itemize}
		\item $F(\phi_{\tilde{\tau}} )=\{\phi:WD_F\longrightarrow{}^LH^{op}|\phi|_{WD_E}=\phi_{\tilde{\tau}}  \}$;
		\item $m(\lambda,\phi)$ is the multiplicity of the trivial character of $S_\phi$ contained in $\lambda|_{S_{\phi}}$;
		\item  $d_0(\phi)=|Coker\{S_{\phi}\longrightarrow S_{\phi_{\tilde{\tau}} }^{\Gal(E/F)} \} |$.
	\end{itemize}
In particular, if $\tilde{\tau}$ is tempered and nongeneric, then $\Hom_{\GSp_4(F)}(\tilde{\tau},\omega_{E/F})=0$.
\end{thm}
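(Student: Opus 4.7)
The plan is to analyze the identity simultaneously on both sides according to the shape of the tempered parameter $\phi_{\tilde{\tau}}:WD_E\to{}^L\GSp_4$. Using the Gan--Takeda local Langlands correspondence, tempered parameters split into a manageable list of structural types: (a) irreducible discrete parameters into $\GSp_4(\mathbb{C})$; (b) Saito--Kurokawa/Soudry endoscopic parameters $\phi_1\oplus\phi_2$ with $\phi_i$ two-dimensional symplectic; (c) Steinberg-twist type; (d) tempered parameters obtained by parabolic induction from a proper Levi (Klingen or Siegel). For each type I would write down $S_{\phi_{\tilde{\tau}}}$, its $\Gal(E/F)$-fixed subgroup, and enumerate $F(\phi_{\tilde{\tau}})$ together with $\deg\Phi(\phi)$ and $d_0(\phi)$. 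This reduces the theorem to a finite collection of numerical checks.

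For the automorphic side my main tool would be the theta correspondence for the dual pairs $(\GSp_4,\GO(V))$ with $\dim V=4$, combined with the exceptional isogeny $\PGSp_4\cong\SO_5$. A see-saw diagram pairing $(\GSp_4(E),\GSp_4(F))$ against $(\GO(V)(E),\GO(V)(F))$ converts
\[\Hom_{\GSp_4(F)}(\tilde{\tau},\omega_{E/F})\]
into a distinction Hom-space for $\GSO_4$ (split or quasi-split) with character $\omega_{E/F}$. Here the twist $\omega_{E/F}=\chi_H$ matches the volume character on the orthogonal side. Invoking the Prasad conjecture for $\GSO_4$ and $\GSO(V)$ that was proved in \S3, the multiplicity is computed on each pure inner form. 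The lifts to ${}^L\mathrm{USp}_4$ are then built functorially from lifts of the auxiliary orthogonal parameter via the embedding $\mathrm{USp}_4\hookrightarrow R_{E/F}\GSp_4$; this gives a bijective match between the theta-produced lifts of $\phi_{\tilde{\tau}}$ to $\GSp_4(F)$-distinguished representations and the elements of $F(\phi_{\tilde{\tau}})$ up to twists by $\omega_{E/F}$ and by characters of $S_{\phi_{\tilde{\tau}}}$.

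With the bijection in place, one checks in each case that the weight $m(\lambda,\phi)\deg\Phi(\phi)/d_0(\phi)$ records exactly which member of the $L$-packet $\Pi_{\phi_{\tilde{\tau}}}$ is singled out by the theta transfer. The vanishing statement for nongeneric tempered $\tilde{\tau}$ follows from Prasad's Whittaker constraint (part (ii) of Theorem \ref{localmain}): nongeneric tempered members of an endoscopic packet cannot carry a Whittaker functional with respect to a nontrivial character of $N(E)$ trivial on $N(F)$, so no lift in $F(\phi_{\tilde{\tau}})$ can contribute and both sides vanish.

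The hard part will be case (b), the endoscopic Saito--Kurokawa/Soudry regime $\phi_{\tilde{\tau}}=\phi_1\oplus\phi_2$ with distinct two-dimensional symplectic constituents, where $S_{\phi_{\tilde{\tau}}}=\mathbb{Z}/2\mathbb{Z}$, only one member of the packet is $(\GSp_4(F),\omega_{E/F})$-distinguished, and both $\deg\Phi(\phi)$ and $d_0(\phi)$ can be genuinely nontrivial. In this case the bookkeeping between the cokernel $\mathrm{Coker}\{S_\phi\to S_{\phi_{\tilde{\tau}}}^{\Gal(E/F)}\}$, the number of inequivalent lifts, and the character $\lambda$ detecting the distinguished member has to be aligned precisely; the see-saw must be run both for $\GO(V)$ with $V$ of discriminant $E$ and with $V$ of trivial discriminant, and the contributions added without double counting. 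Once this balancing is established, the remaining cases either reduce to it by twisting or are straightforward parabolic-induction computations.
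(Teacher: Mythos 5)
First, a point of logic: this theorem is not proved in the present paper at all --- it is imported verbatim from \cite[Theorem 3.4.6]{lu2018GSp(4)} and used as an input to the proof of Theorem \ref{sp(4)}. So there is no internal proof to compare against, and any argument you give must in particular avoid circularity with Theorem \ref{sp(4)}; your use of the $\GSO_4$ results of \S 3 is fine on that score, since \S 3 is independent of the $\GSp_4$ statement.

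The substantive gap is your restriction of the theta-theoretic input to the dual pair $(\GSp_4,\GO(V))$ with $\dim V=4$. The similitude theta lift from $\GSp_4(E)$ to the $4$-dimensional orthogonal similitude groups is nonzero only for representations of Yoshida (endoscopic) type, i.e.\ those whose parameter decomposes as $\phi_1\oplus\phi_2$ with $\det\phi_1=\det\phi_2$; for a tempered representation whose $4$-dimensional parameter is irreducible the lift to every form of $\GO_4$ vanishes, and your see-saw then says nothing about $\Hom_{\GSp_4(F)}(\tilde{\tau},\omega_{E/F})$. The mechanism actually used (and quoted in this paper in the proof of Proposition \ref{genericrep}) is the two-term decomposition
\[\dim\Hom_{\GSp_4(F)}(\pi^\delta,\mathbb{C})=\dim\Hom_{\GO_{3,3}(F)}\bigl(\Theta^{3,3}_{\psi_E}(\pi),\mathbb{C}\bigr)+\dim\Hom_{\GO_{4,0}(F)}\bigl(\Theta^{2,2}_{\psi_E}(\pi),\mathbb{C}\bigr),\]
in which the $6$-dimensional pair, via $\GSO_{3,3}\sim(\GL_4\times\GL_1)/\GL_1$, converts the stable (irreducible-parameter) cases into $\GL_4(E)/\GL_4(F)$-distinction, where Flicker--Kable type results and the mirabolic-integral argument of Lemma \ref{GL(4)} are available. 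Without the $\dim V=6$ pair your case (a) cannot be treated. Relatedly, your justification of the nongeneric vanishing by ``part (ii) of Theorem \ref{localmain}'' does not apply: that clause is a sufficiency statement for $\Uu_2$ and $\SO_4$. The correct route is that $\GSp_4(F)$-distinction forces the distinguished constituent to be $\psi_0$-generic for a character of $N(E)$ trivial on $N(F)$ (again via the $\GO_{3,3}$ lift and Lemma \ref{GL(4)}), which excludes the nongeneric tempered members of a packet. Finally, a terminological caution: the tempered endoscopic parameters here are of Yoshida type; Saito--Kurokawa parameters are nontempered and should not appear in your case list.
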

\begin{rem}
	We will say that the Langlands parameter $\phi_{\tilde{\tau}}$ is conjugate-orthogonal if the composite
	\[i\circ\phi_{\tilde{\tau}}:WD_E\longrightarrow\GL_4(\mathbb{C})  \]
	is conjugate-orthogonal in the sense of \cite[\S 3]{gan2011symplectic}, where $i:\GSp_4(\mathbb{C})\longrightarrow\GL_4(\mathbb{C})$ is the natural embedding. Then the generic tempered representation $\tilde{\tau}$ of $\GSp_4(E)$ is $\GSp_4(F)$-distinguished if and only if $\phi_{\tilde{\tau}}$ is conjugate-orthogonal. We will identify the characters of $F^\times$ and the characters of $W_F$ by the local class field theory.
\end{rem}

\begin{thm}\cite[Theorem 3.3.8]{lu2018GSp(4)}
	Let $\tilde{\tau}$ be a generic tempered representation of $\GSp_4(E)$, distinguished by $\GSp_4(F)$. Set $\phi_0=\chi_1\oplus\chi_2$ with $\chi_1\neq\chi_2$ and $\chi_1|_{F^\times}=\chi_2|_{F^\times}=\mathbf{1}$. Then
	\begin{equation}\label{multiplicity2}
		\dim\Hom_{\GSp_4(F)}(\tilde{\tau},\mathbb{C})=\begin{cases}
	2,&\mbox{ if }\phi_{\tilde{\tau}}=\phi_1\oplus\phi_2(\mbox{endoscopic case}) \mbox{ with }\phi_i\neq\phi_0\mbox{ conjugate-orthogonal};\\
	1,&\mbox{ otherwise.} 	\end{cases}  
	\end{equation}
\end{thm}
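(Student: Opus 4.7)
The plan is to reduce to the Prasad conjecture for $\GSO_{2,2}$ (Theorem \ref{GSO(2,2)}) via the similitude theta correspondence for the dual pair $(\GSp_4,\GO_{2,2})$. Since $\tilde{\tau}$ is generic, tempered, and $\GSp_4(F)$-distinguished, its $L$-parameter $\phi_{\tilde{\tau}}\colon WD_E\to\GSp_4(\mathbb{C})$ is conjugate-self-dual, and falls into three types: (a) irreducible as a $4$-dimensional representation; (b) endoscopic of the form $\phi_1\oplus\phi_2$ with $\dim\phi_i=2$ and $\det\phi_1=\det\phi_2$; or (c) further reducible, arising from a proper parabolic of $\GSp_4$.

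In the endoscopic case (b), I would realize $\tilde{\tau}$ as the theta lift of an irreducible representation $\Sigma=\pi_1\boxtimes\pi_2$ of $\GSO_{2,2}(E)$ with $\omega_{\pi_1}=\omega_{\pi_2}$ and $\phi_{\pi_i}=\phi_i$. A see-saw argument for the similitude pair $(\GSp_4,\GO_{2,2})$, after tracking the splitting character of the Weil representation, yields the identification
\[
\Hom_{\GSp_4(F)}(\tilde{\tau},\mathbb{C})\;\cong\;\Hom_{\GSO_{2,2}(F)}(\Sigma,\omega_{E/F}\circ\lambda_V),
\]
and Theorem \ref{GSO(2,2)} computes the right-hand side as $\sum_{\phi\in F(\phi_\Sigma)}\deg\Phi(\phi)/d_0(\phi)$. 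A direct inspection of $F(\phi_\Sigma)$ shows that it has two elements precisely when both $\phi_i$ are conjugate-orthogonal and neither equals $\phi_0=\chi_1\oplus\chi_2$, for in the degenerate $\phi_0$ case the twist by $\omega_{E/F}$ merely permutes $\chi_1,\chi_2$ and fixes the lift, reducing the cardinality from $2$ to $1$.

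For the irreducible discrete case (a), the uniqueness of the extension of a discrete parameter from $WD_E$ to $WD_F$ (noted in the Remark following Theorem \ref{localmain}) combined with Beuzart-Plessis's transfer result produces $\dim\Hom_{\GSp_4(F)}(\tilde{\tau},\mathbb{C})=1$. For the remaining reducible case (c), the Bernstein-Zelevinsky geometric lemma applied to the Siegel or Klingen parabolic gives a double-coset analysis $P(E)\backslash\GSp_4(E)/\GSp_4(F)$ with a single open orbit supporting a distinguished model, again yielding multiplicity $1$. The main obstacle is case (b): it is straightforward to produce two distinct lifts in $F(\phi_\Sigma)$ generically by twisting with $\omega_{E/F}$, but carefully identifying the precise degeneracy at $\phi_0$ (and showing it accounts for \emph{all} collapses from multiplicity $2$ down to $1$) requires detailed bookkeeping of how $\omega_{E/F}$ acts on the component group $S_{\phi_\Sigma}$, together with careful handling of the splitting character of the similitude Weil representation so that distinction by $\mathbb{C}$ on the symplectic side corresponds exactly to distinction by $\omega_{E/F}\circ\lambda_V$ on the orthogonal side.
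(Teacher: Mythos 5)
This statement is imported from \cite[Theorem 3.3.8]{lu2018GSp(4)}; the present paper gives no proof of it, so your proposal has to stand on its own. Your instinct to reduce to $4$-dimensional orthogonal groups via the similitude theta correspondence is the right one, but the reduction you write down is not correct, and the non-endoscopic cases are not actually proved. The identity you need is the one this paper itself quotes from \cite[Theorem 4.2.18]{lu2016new} in the proof of Proposition~\ref{genericrep}:
\[\dim\Hom_{\GSp_4(F)}(\tilde\tau,\mathbb{C})=\dim\Hom_{\GO_{3,3}(F)}(\Theta^{3,3}_{\psi_E}(\tilde\tau),\mathbb{C})+\dim\Hom_{\GO_{4,0}(F)}(\Theta^{2,2}_{\psi_E}(\tilde\tau),\mathbb{C}).\]
Your single isomorphism $\Hom_{\GSp_4(F)}(\tilde\tau,\mathbb{C})\cong\Hom_{\GSO_{2,2}(F)}(\Sigma,\omega_{E/F}\circ\lambda_V)$ cannot be right: by the proof of Theorem~\ref{GSO(2,2)} that right-hand side equals $\prod_i\dim\Hom_{\GL_2(F)}(\pi_i,\omega_{E/F})\le 1$, so it can never yield the multiplicity $2$; and when you then evaluate it via part (iii) of Theorem~\ref{GSO(2,2)} you are in fact quoting the formula for the \emph{sum} of the $\GSO_{2,2}(F)$- and $\GSO_{4,0}(F)$-periods. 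The multiplicity $2$ comes precisely from the anisotropic term: $\GO_{4,0}(F)$ sits inside $\GO_{2,2}(E)$ through the quaternion division algebra $D$, and the extra period exists iff both $\pi_i$ transfer to $D^\times$ and are distinguished there. This is also the true mechanism of the degeneration at $\phi_0$: if $\phi_i=\chi_1\oplus\chi_2$ with $\chi_1\ne\chi_2$ and $\chi_j|_{F^\times}=\mathbf{1}$, then $\pi_i$ is a principal series, the quaternionic period vanishes, and the second term dies --- not because ``the twist by $\omega_{E/F}$ fixes the lift'' (it does not: the four extensions $\tilde\chi_1\oplus\tilde\chi_2$ still fall into two $\omega_{E/F}$-orbits).

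The non-endoscopic cases are also not established. Beuzart-Plessis's theorem compares multiplicities between a group and its inner form; it does not compute either of them, and ``uniqueness of the lifted parameter'' yields the multiplicity only if one already assumes the Prasad conjecture, which is circular here. The correct argument for the stable case is that $\Theta^{2,2}_{\psi_E}(\tilde\tau)=0$, so only the $\GO_{3,3}$ term survives, and $\GSO_{3,3}\sim\GL_4\times\GL_1$ reduces the count to the multiplicity-one theorem for $(\GL_4(E),\GL_4(F))$-distinction. Finally, the claim that the geometric lemma produces ``a single open orbit supporting a distinguished model'' for every further-reducible tempered parameter is unsubstantiated and false as a blanket statement: a parameter that is a sum of characters is still endoscopic, $\phi_1\oplus\phi_2$ with reducible $\phi_i$, and the theorem allows multiplicity $2$ for such parameters. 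The dichotomy is governed by the vanishing or non-vanishing of the quaternionic period, not by a count of open orbits.
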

\begin{rem}
	The identity \eqref{multiplicity2} could be very useful when we study the multiplicity $\dim\Hom_{\Sp_4(F)}(\tau,\mathbb{C}) $ for a generic representation $\tau$ of $\Sp_4(E)$.
\end{rem}
Suppose that $\tau$ is a $\Sp_4(F)$-distinguished representation of $\Sp_4(E)$. Let $\tilde{\tau}$ be
a $\GSp_4(F)$-disitinguished representation of $\GSp_4(E)$ and $\tilde{\tau}|_{\Sp_4(E)}\supset \tau$. Fix a nontrivial additive character $\psi_0(e)=\psi(tr_{E/F}(\delta e))$ for $e\in E$ where $E=F[\delta]$
with $\delta^2\in F^\times\setminus {F^\times}^2$.
Set \[\GSp_4(E)^\natural=\{g\in\GSp_4(E)|\lambda_W(g)\in F^\times \}. \]
Let $\pi$ be a generic smooth representation of $\GSp_4(E)$. Consider the restriction of $\pi$ to $\GSp_4(E)^\natural$ and write it as a direct sum of irreducible representations:
\[\pi|_{\GSp_4(E)^\natural}=\pi^+\oplus \oplus_{i\in I} \pi_i \]
where $\pi^+$ is $\psi_0$-generic and $\pi_i$'s are not $\psi_0$-generic. Note that the finite set $I$ may possibly be empty.

 Let $\pi^0$ be a representation of $\GSp_4(E)^\natural$, define
\[(\pi^0)^\delta(g)=\pi^0(g_\delta^{-1} g g_\delta) \]
for $g\in\GSp_4(E)$, where $g_\delta\in\GSp_4(E)$ with similitude $\lambda_W(g_\delta)=\delta$.
\begin{prop}\cite[Theorem 1.1]{anandavardhanan2003distinguished}
	Let $\pi$ be an irreducible, smooth, generic and tempered representation of $\GSp_4(E)$. Then
	$\pi$ is $\Sp_4(F)$-distinguished if and only if $\pi^+$ is $\Sp_4(F)$-distinguished.\label{genericrep}
\end{prop}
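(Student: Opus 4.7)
The plan is to deduce the proposition from an analogue of \cite[Proposition 3.1]{anandavardhanan2003distinguished}, now for the pair $(\GSp_4(E)^\natural,\Sp_4(F))$ in place of $(\GL_2(EK)^+,\GL_2(K)^\natural)$ used earlier in \S 3. The ``if'' direction is immediate: since $\Sp_4(F)\subset\GSp_4(E)^\natural$, the decomposition
\[ \pi|_{\GSp_4(E)^\natural}=\pi^+\oplus\bigoplus_{i\in I}\pi_i \]
is also a direct sum of $\Sp_4(F)$-modules, so
\[ \Hom_{\Sp_4(F)}(\pi,\mathbb{C})=\Hom_{\Sp_4(F)}(\pi^+,\mathbb{C})\oplus\bigoplus_{i\in I}\Hom_{\Sp_4(F)}(\pi_i,\mathbb{C}), \]
and any nonzero element of $\Hom_{\Sp_4(F)}(\pi^+,\mathbb{C})$ automatically lifts to an element of $\Hom_{\Sp_4(F)}(\pi,\mathbb{C})$ via this projection. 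The substance is the converse: assuming $\pi$ is $\Sp_4(F)$-distinguished, one must show that no $\pi_i$ with $i\in I$ supports a nonzero $\Sp_4(F)$-invariant form, so that all the $\Sp_4(F)$-invariance is concentrated on $\pi^+$.

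To prove this converse I would work with the Whittaker model $\mathcal{W}(\pi,\psi_E)$, which exists because $\pi$ is generic and tempered. Conjugation of $\psi_E|_{N(E)}$ by elements $g_e\in\GSp_4(E)$ with $\lambda_W(g_e)=e$ produces an $E^\times/F^\times$-orbit of Whittaker characters of $N(E)$, and this orbit parametrizes precisely the summands $\pi^+$ and the $\pi_i$. By construction, $\pi^+$ is the unique constituent on which the chosen character $\psi_0$ (trivial on $N(F)$) is generic. Given a nonzero form $\ell\in\Hom_{\Sp_4(F)}(\pi,\mathbb{C})$, the idea is to express $\ell$ through the Whittaker model and then run Fourier analysis on the compact quotient $N(E)\cap\Sp_4(F)\backslash N(E)$: the $\Sp_4(F)$-invariance of $\ell$ together with $\psi_E$-equivariance forces the resulting expansion to be supported on the single character trivial on $N(F)$, i.e.\ on the $\psi_0$-generic summand $\pi^+$. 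By multiplicity one for Whittaker models, the contribution from each $\pi_i$ with $i\in I$ must vanish, which is precisely the claim that $\pi_i$ is not $\Sp_4(F)$-distinguished.

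The principal obstacle lies in the analytic input needed to realize $\ell$ as a convergent Whittaker period functional; this is where temperedness enters, through asymptotics of Whittaker functions on the open Bruhat cell in the spirit of Casselman--Shahidi, together with the need to justify the interchange of a unipotent average with the Whittaker integral. A secondary point is to verify that the conjugation action of $\GSp_4(E)/\GSp_4(E)^\natural$ separates the Whittaker characters of $N(E)$ modulo $N(F)$ and produces a unique orbit representative trivial on $N(F)$; this is a direct computation in the root datum of $\Sp_4$ but is essential for identifying the distinguished constituent as $\pi^+$ and not as some other $\pi_i$. With these two steps in place, the proof of \cite[Proposition 3.1]{anandavardhanan2003distinguished} adapts verbatim to deliver the equivalence claimed in the proposition.
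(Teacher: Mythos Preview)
Your outline correctly isolates the nontrivial direction and correctly identifies the target: showing that any $\Sp_4(F)$-distinguished constituent of $\pi|_{\GSp_4(E)^\natural}$ must be $\psi_0$-generic. However, the step you label ``the principal obstacle'' is not merely an analytic technicality --- it is the entire content of the proof, and your proposal does not supply it. For $\GL_n$, the invariant functional on a $\GL_n(F)$-distinguished representation of $\GL_n(E)$ is known to be realized by integrating a Whittaker function over $\mathfrak{N}(F)\backslash P_1(F)$, the mirabolic modulo the unipotent (this is the theorem of Matringe quoted in the paper as Lemma~\ref{GL(4)}). That formula is what makes the Fourier-analytic argument in \cite{anandavardhanan2003distinguished} work. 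There is no analogous mirabolic subgroup in $\GSp_4$, and no known integral formula of this shape for the $\Sp_4(F)$-invariant functional; Casselman--Shahidi asymptotics control the size of Whittaker functions but do not produce such a formula. So the assertion that the proof of \cite[Proposition~3.1]{anandavardhanan2003distinguished} ``adapts verbatim'' is exactly where the argument breaks.

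The paper's proof avoids this problem by transferring it to $\GL_4$, where the required Whittaker-period realization is available. Concretely: one may assume the distinguished constituent $(\pi^0)^\delta$ is $\GSp_4(F)$-distinguished; the see-saw identity \cite[Theorem~4.2.18]{lu2016new} then forces its theta lift $\Theta_{\psi_E}^{3,3}(\pi^0)$ to $\GO_{3,3}$ to be $\GO_{3,3}(F)$-distinguished, hence (restricting to $\GSO_{3,3}$) one obtains a $\GL_4(F)$-distinguished representation $\Pi$ of $\GL_4(E)^\natural$. Now Lemma~\ref{GL(4)} applies and says $\Pi$ is $\psi_0$-generic; since theta lifting preserves genericity, $(\pi^0)^\delta$ is $\psi_0$-generic as well. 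In short, the paper does not prove a $\GSp_4$-analogue of the Whittaker-period formula at all --- it reduces to the $\GL_4$ case where that formula is a theorem.
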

\begin{proof}
	It suffices to show that if $(\pi^0)^\delta$ is a $\Sp_4(F)$-distinguished representation of $\GSp_4(E)^\natural$, then $(\pi^0)^\delta$ has a Whittaker model with respect to a nontrivial additive character of $N(E)$ which is trivial on $N(F)$, where $N\subset\Sp_4$ is a unipotent subgroup.
	Without loss of generality, we may assume that $(\pi^0 )^\delta$ is $\GSp_4(F)$-distinguished.
	Due to \cite[Theorem 4.2.18]{lu2016new},
	\[\dim\Hom_{\GSp_4(F)}((\pi^0)^\delta,\mathbb{C} )=\dim\Hom_{\GO_{3,3}(F)} (\Theta_{\psi_E}^{3,3}(\pi^0),\mathbb{C})+\dim\Hom_{\GO_{4,0}(F) }(\Theta_{\psi_E}^{2,2}(\pi^0),\mathbb{C})    \]
	where $\Theta_{\psi_E}^{n,n}(\pi^0)$ is the big theta lift from $\GSp_4(E)^\natural$ to $\GO_{n,n}(E)^\natural$ for $n=2,3$. (See \cite[\S4.2]{lu2016new} for more details.) Then  $\Theta_{\psi_E}^{3,3}(\pi^0)$ is $\GO_{3,3}(F)$-distinguished. Moreover, $\Theta_{\psi_E}^{3,3}(\pi^0)|_{\GSO_{3,3}(E)^\natural}$ is irreducible and is $\GSO_{3,3}(F)$-distinguished. Suppose that $\Theta_{\psi_E}^{3,3}(\pi^0)|_{\GSO_{3,3}(E)^\natural }=\Pi\boxtimes\chi$, where $\Pi$ is an irreducible representation of $$\GL_4(E)^\natural=\{g\in\GL_4(E)|\det(g)\in F^\times \}.$$ Due to Lemma \ref{GL(4)} (below), the representation $\Theta^{3,3}_{\psi_E}(\pi^0)$ is $\psi_0$-generic and so is its theta lift $\theta_{\psi_0}(\theta_{\psi_E}^{3,3}(\pi^0))=(\pi^0)^\delta $, i.e. $(\pi^0)^\delta$ has a Whittaker model with respect to $\psi_0$.
\end{proof}
\begin{rem}
	Proposition \ref{genericrep} is the key statement used in \cite{anandavardhanan2003distinguished,anandavardhanan2016distinguished} to verify the Prasad conjecture for $\SL_n$. It will be the key result used in this paper for the proof of Theorem \ref{sp(4)} as well.
\end{rem}
Set $\GL_4(E)^\natural=\{g\in\GL_4(E)|\det(g)\in F^\times \}$ and let $\mathfrak{N}\subset \GL_4$ be a unipotent subgroup.
\begin{lem}\cite[Lemma 4.2]{anandavardhanan2016distinguished}\label{GL(4)}
	Let $\Pi$ be a generic tempered representation of $\GL_4(E)^\natural$, distinguished by $\GL_4(F)$. Then $\Pi$ has a Whittaker model for a nondegenerate character $\psi_0:\mathfrak{N}(E)/\mathfrak{N}(F)\longrightarrow\mathbb{C}^\times$.
\end{lem}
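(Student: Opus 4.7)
My plan is to adapt the argument of Anandavardhanan--Prasad [Proposition 3.1, 2003] from the case $\SL_n \subset \GL_n$ to the present case $\GL_4(E)^\natural \subset \GL_4(E)$. First I would lift $\Pi$ to an irreducible generic tempered representation $\tilde{\Pi}$ of $\GL_4(E)$ such that $\Pi$ appears as a summand of $\tilde{\Pi}|_{\GL_4(E)^\natural}$. Since $\GL_4(F) \subset \GL_4(E)^\natural$, the $\GL_4(F)$-distinction of $\Pi$ propagates to $\tilde{\Pi}$ by composing a $\GL_4(F)$-invariant form on $\Pi$ with the projection onto the summand. By the theorem of Flicker, $\tilde{\Pi}^\sigma \cong \tilde{\Pi}^\vee$, and $\tilde{\Pi}$ carries a Flicker--Rallis period that, in its Whittaker model, can be expressed as an integral of Whittaker functions against any nondegenerate character $\psi_E$ of $\mathfrak{N}(E)$ which is trivial on $\mathfrak{N}(F)$.

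Next, I would analyze the restriction $\tilde{\Pi}|_{\GL_4(E)^\natural} = \bigoplus_i \Pi_i$, a finite multiplicity-free direct sum. The quotient $\GL_4(E)/(Z(E)\cdot\GL_4(E)^\natural) \cong E^\times/F^\times$ acts by conjugation both on the set of constituents $\{\Pi_i\}$ and on the set of $\GL_4(E)^\natural$-orbits of nondegenerate characters of $\mathfrak{N}(E)$. By genericity of $\tilde{\Pi}$ and uniqueness of Whittaker models on $\GL_4(E)^\natural$, each $\Pi_i$ is $\psi_i$-generic for exactly one such orbit $[\psi_i]$, and the correspondence $\Pi_i \leftrightarrow [\psi_i]$ is equivariant for this $E^\times/F^\times$-action. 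In particular, among the constituents, exactly one is $\psi_0$-generic for a $\psi_0$ trivial on $\mathfrak{N}(F)$.

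The decisive step is to identify the $\GL_4(F)$-distinguished summand $\Pi$ with this distinguished $\psi_0$-generic summand. To this end, I would realize the Flicker--Rallis period on $\tilde{\Pi}$ by a Whittaker function with character $\psi_E$ trivial on $\mathfrak{N}(F)$, obtaining a nonzero $\GL_4(F)$-invariant functional; its nonvanishing restriction to a single irreducible summand forces that summand to admit a nonzero $\psi_0$-Whittaker functional for $\psi_0$ in the orbit of $\psi_E$, so $\Pi$ is $\psi_0$-generic as required. The main obstacle is precisely this equivariance-and-matching step: verifying that the $E^\times/F^\times$-action on Whittaker characters (through the simple root cocharacters of $\GL_4$) is compatible with the $E^\times/F^\times$-action on the summands of $\tilde{\Pi}|_{\GL_4(E)^\natural}$, and then leveraging multiplicity one for the symmetric pair $(\GL_4(E), \GL_4(F))$ to ensure that the distinguishing functional indeed lives on the summand picked out by the Whittaker condition.
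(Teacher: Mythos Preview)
Your proposal is correct and rests on the same key input as the paper: the $\GL_4(F)$-invariant functional on $\tilde{\Pi}$ is realized by the mirabolic integral $W\mapsto\int_{\mathfrak{N}(F)\backslash P_1(F)}W(p)\,dp$ in the $\psi_0$-Whittaker model (the paper invokes Matringe's theorem for exactly this), and multiplicity one for $(\GL_4(E),\GL_4(F))$ then forces this form to coincide, up to scalar, with your extension-by-zero of the given form on $\Pi$, so $\Pi$ must carry nonzero $\psi_0$-Whittaker functions. Your additional $E^\times/F^\times$-equivariance analysis of summands versus Whittaker orbits, while correct and in the spirit of Anandavardhanan--Prasad (2003), is unnecessary once that identification is made; the paper's argument dispenses with that scaffolding entirely.
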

It follows from the fact that $l\in\Hom_{\GL_4(F)}(\Pi,\mathbb{C})$ can be written uniquely (up to a scalar) as
\[W\mapsto \int_{\mathfrak{N}(F)\backslash P_1(F)}W(pg)dp  \]
where $P_1(F)$ is the mirabolic subgroup of $\GL_4(F)$, the subgroup of $\GL_4(F)$ with last row $(0,\cdots, 0,1)$ and $W$ is a Whittaker function in $\Pi$. (See \cite[Theorem 1.1]{matringe2017test}.)
\begin{lem}
	Let $\pi$ be a $\GL_2(F)$-distinguished supercuspidal representation of $\GL_2(E)$. If there exists a unique nontrivial quadratic character $\chi$ such that $\pi\otimes\chi\cong\pi$, then $\chi|_{F^\times}=\mathbf{1}$.
\end{lem}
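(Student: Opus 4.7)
The plan is threefold: show that $\chi$ must be Galois-stable using the uniqueness hypothesis, express $\chi$ as a pullback by the norm, and rule out one remaining bad possibility by a dihedral induction calculation. The main obstacle is the final step, which corresponds to the case where $\pi$ would be dihedral with respect to a cyclic degree-$4$ extension of $F$.

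First, since $\pi$ is $\GL_2(F)$-distinguished, Flicker's theorem gives $\pi^\sigma\cong\pi^\vee$. Starting from $\pi\otimes\chi\cong\pi$ and using $\chi=\chi^{-1}$, I take duals and apply $\sigma$ to obtain $\pi\otimes\chi^\sigma\cong\pi$; hence $\chi^\sigma$ is another nontrivial quadratic self-twist of $\pi$ (the possibility $\chi^\sigma=\mathbf{1}$ would force $\chi=\mathbf{1}$), and uniqueness then forces $\chi^\sigma=\chi$. By Hilbert 90, $\chi=\eta\circ N_{E/F}$ for some character $\eta$ of $F^\times$, and since $\chi^2=\mathbf{1}$ the character $\eta^2$ is trivial on $N_{E/F}(E^\times)$, so $\eta^2\in\{\mathbf{1},\omega_{E/F}\}$. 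For $x\in F^\times$ the identity $\chi(x)=\eta(x^2)=\eta^2(x)$ reduces the statement to showing $\eta^2=\mathbf{1}$.

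Suppose for contradiction $\eta^2=\omega_{E/F}$. Let $\tilde\eta:W_F\to\mathbb{C}^\times$ be the corresponding Weil-group character of order $4$, so that $\chi=\tilde\eta|_{W_E}$ and the quadratic extension $L/E$ cut out by $\chi$ satisfies $W_L=\ker\tilde\eta$. From $\tilde\eta^2=\omega_{E/F}$ one has $W_L\subset\ker(\omega_{E/F})=W_E$, so $E\subset L$ with $L/F$ Galois cyclic of order $4$. Because $\chi$ is a self-twist of the supercuspidal $\pi$, one has $\pi\cong\mathrm{Ind}_{W_L}^{W_E}(\mu)$ with $\mu^{\sigma_L}\neq\mu$, where $\sigma_L\in\Gal(L/E)$ denotes the nontrivial element. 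Choose a generator $\tilde\sigma$ of $\Gal(L/F)$ with $\tilde\sigma^2=\sigma_L$; then $\pi^\sigma\cong\mathrm{Ind}_{W_L}^{W_E}(\mu^{\tilde\sigma})$, and the Flicker-type isomorphism $\pi^\sigma\cong\pi^\vee$ translates via Mackey into $\mu^{\tilde\sigma}\in\{\mu^{-1},\mu^{-\sigma_L}\}$. The first alternative gives $\mu^{\sigma_L}=(\mu^{\tilde\sigma})^{\tilde\sigma}=(\mu^{-1})^{\tilde\sigma}=(\mu^{\tilde\sigma})^{-1}=\mu$, contradicting the irreducibility of the induction; the second alternative, after $\tilde\sigma^{-1}$-conjugation of both sides, reduces to the first. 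This contradiction rules out $\eta^2=\omega_{E/F}$ and completes the proof.
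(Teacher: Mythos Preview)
Your proof is correct, but it takes a different route from the paper's in the crucial step.

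Both arguments hinge on two facts: (i) uniqueness of the self-twist forces $\chi^\sigma=\chi$, hence $\chi=\eta\circ N_{E/F}$ and $\chi|_{F^\times}=\eta^2\in\{\mathbf{1},\omega_{E/F}\}$; and (ii) the case $\chi|_{F^\times}=\omega_{E/F}$ must be excluded. The paper runs these in the opposite order: assuming $\chi|_{F^\times}\neq\mathbf{1}$, it first notes that $\pi\otimes\chi\cong\pi$ together with $\GL_2(F)$-distinction gives $\Hom_{\GL_2(F)}(\pi,\chi|_{F^\times})\neq0$, and then invokes the distinction dichotomy (a supercuspidal cannot be simultaneously $\GL_2(F)$-distinguished and $(\GL_2(F),\omega_{E/F})$-distinguished) to force $\chi|_{F^\times}\neq\omega_{E/F}$; this in turn makes $\chi\chi^\sigma$ a \emph{new} nontrivial self-twist, contradicting uniqueness. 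Your argument instead establishes $\chi^\sigma=\chi$ directly from uniqueness and then excludes $\eta^2=\omega_{E/F}$ by an explicit dihedral computation in the cyclic degree-$4$ tower $L/E/F$, showing that $\pi^\sigma\cong\pi^\vee$ forces $\mu^{\sigma_L}=\mu$ and hence contradicts supercuspidality.

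The trade-off: the paper's argument is shorter but imports the dichotomy as a black box, while your argument is self-contained, using only Flicker's relation $\pi^\sigma\cong\pi^\vee$, Mackey theory, and elementary Galois manipulations. Your final step is in effect a direct proof, in this special dihedral situation, of the very dichotomy the paper quotes.
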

\begin{proof}
	If $\chi|_{F^\times}\neq\mathbf{1}$, then $\Hom_{\GL_2(F)}(\pi,\chi|_{F^\times})\neq0 $. Since $\pi$ is supercuspidal, $\Hom_{\GL_2(F)}(\pi,\omega_{E/F})=0 $ and $\chi|_{F^\times}\neq\omega_{E/F}$. Hence $(\pi\otimes\chi)^\sigma=(\pi\otimes\chi)^\vee$. Note that $\pi^\vee=\pi^\sigma$. Then
	\[\pi^\vee=\pi^\vee\otimes\chi^\sigma\chi \]
	and $\chi^\sigma\chi\neq\mathbf{1}$, which contradicts the assumption that there exists a unique character $\chi$ satisfying $\pi\otimes\chi=\pi$.
\end{proof}
\begin{lem}
	Let $\pi$ be a $\GL_2(F)$-distinguished supercuspidal representation of $\GL_2(E)$. If $\pi\otimes\chi\cong\pi$ and $\chi|_{F^\times}=\chi_F\neq\mathbf{1}$, then $\pi\otimes\chi_F\circ N_{E/F}\cong \pi$.
\end{lem}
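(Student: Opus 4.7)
The plan is a short group-theoretic argument in the character lattice of the stabilizer of $\pi$ under twisting. Write $\mathrm{Stab}(\pi) = \{\eta : E^\times \to \mathbb{C}^\times \mid \pi \otimes \eta \cong \pi\}$. By hypothesis $\chi \in \mathrm{Stab}(\pi)$, and taking central characters of $\pi\otimes\chi\cong\pi$ forces $\chi^2 = \mathbf{1}$ (so $\chi$ is quadratic, and hence so is $\chi_F$).

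First I would produce a second element of $\mathrm{Stab}(\pi)$, namely $\chi^\sigma$. The input for this is the $\GL_2(F)$-distinction of $\pi$, which (since $\pi$ is supercuspidal, hence generic) gives $\pi^\vee \cong \pi^\sigma$. Taking the contragredient of $\pi \otimes \chi \cong \pi$ yields $\pi^\vee \otimes \chi^{-1} \cong \pi^\vee$; substituting $\pi^\vee = \pi^\sigma$ and applying $\sigma$ (using $\sigma^2 = 1$) gives $\pi \otimes \chi^{-\sigma} \cong \pi$. Since $\chi$, and therefore $\chi^\sigma$, is quadratic, $\chi^{-\sigma} = \chi^\sigma$, so $\chi^\sigma \in \mathrm{Stab}(\pi)$.

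Since $\mathrm{Stab}(\pi)$ is a group, the product $\chi \cdot \chi^\sigma$ also lies in it. The final step is the identification
\begin{equation*}
\chi \cdot \chi^\sigma = \chi_F \circ N_{E/F},
\end{equation*}
which is a direct computation: for $e \in E^\times$ we have $e\sigma(e) \in F^\times$, so
\begin{equation*}
(\chi_F \circ N_{E/F})(e) = \chi_F(e\sigma(e)) = \chi(e\sigma(e)) = \chi(e)\,\chi^\sigma(e),
\end{equation*}
using $\chi_F = \chi|_{F^\times}$ and the multiplicativity of $\chi$. Combining, $\pi \otimes (\chi_F \circ N_{E/F}) \cong \pi$, as required.

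I do not anticipate a real obstacle here: the argument rests only on the standard identity $\pi^\vee \cong \pi^\sigma$ for generic $\GL_2(F)$-distinguished representations of $\GL_2(E)$, and the elementary fact that quadratic characters are self-inverse. The only place to be careful is the step applying $\sigma$ to the twisting relation, where one must keep track of $\chi^\sigma$ versus $\chi^{-\sigma}$, but quadraticity makes these coincide.
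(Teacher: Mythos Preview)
Your argument is correct and follows essentially the same route as the paper: use $\GL_2(F)$-distinction to get $\pi^\vee\cong\pi^\sigma$, deduce that the self-twist group is $\sigma$-stable so that $\chi\chi^\sigma$ lies in it, and identify $\chi\chi^\sigma$ with $\chi_F\circ N_{E/F}$. The only cosmetic difference is that the paper phrases the manipulation at the level of $L$-parameters (using that $\phi_\pi$ and $\phi_\pi\otimes\chi$ are conjugate-orthogonal) rather than directly with representations.
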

\begin{proof}
	Since $\pi$ is $\GL_2(F)$-distinguished,  the $L$-parameter $\phi_{\pi}$ is conjugate-orthogonal. So is $\phi_\pi\otimes\chi$. Then $\phi_{\pi}^\vee=\phi_{\pi^\sigma}$ and
	$(\phi_{\pi})^\vee\otimes\chi=(\phi_{\pi^\sigma})\otimes\chi^\sigma$. So
	\[\phi_{\pi}=\phi_{\pi}\otimes \chi\chi^\sigma=\phi_{\pi}\otimes \chi_F|_{W_E} \]
	which means that $\pi\cong\pi\otimes \chi_F\circ N_{E/F}$.
\end{proof}
\begin{rem}
	Suppose that $\tau$ is a $\SL_2(F)$-distinguished representation of $\SL_2(E)$ and $$\phi_{\tau}=Ad(\phi_{\pi})=\chi\oplus \chi_F|_{W_E}\oplus\chi\chi_F|_{W_E}.$$ Then there exists only one lifted parameter
	\[\phi=\chi_F\oplus Ind_{W_{E'}}^{W_F}\chi' \]
	such that $\phi|_{WD_E}=\phi_\tau $, where $E'$ is a quadratic field extension over $F$ with associated quadratic character $\omega_{E'/F}=\chi_F$ and $\chi'$ is a character of $W_{E'}$. (See \cite[Page 491]{L2018pacific}.) There are $4$ elements in the $L$-packet $\Pi_{\phi_{\tau}}$ and half of them are $\SL_2(F)$-distinguished, which is compactible with the fact that the group homomorphism
	\[S_\phi=\mu_2\hookrightarrow S_{\phi_\tau }=\mu_2\times\mu_2 \]
	is the diagonal embedding and two charactes on $S_{\phi_\tau }$ restricted to $S_\phi$ contains the trivial representation. It can be viewed as the first evidence why $m(\lambda,\phi)$ appears in \eqref{conjidentity} in the Prasad conjecture.
\end{rem}

Now we give the proof of Theorem \ref{sp(4)}. Let $H=\Sp_4$, $\chi_H=\mathbf{1}$. Then $H^{op}=\Sp_4$.  Fix $\ell\in  W_F\setminus W_E$. The proof heavily depends on 
\begin{itemize}
	\item the description of the $L$-packets for $\Sp_4$ in \cite[\S6]{takeda2010Sp(4)} and
	\item the Prasad conjecture for $\SL_2$ proved in \cite[Theorem 1.2]{L2018pacific}.
\end{itemize}
We will use the restriction to $\Sp_4(E)$ from $\GSp_4(F)$-distinguished representations of $\GSp_4(E)$ to study the multiplicity $\dim\Hom_{\Sp_4(F)}(\tau,\mathbb{C}) $, following the methods used by Anandavardhanan and Prasad in \cite{anandavardhanan2003distinguished,anandavardhanan2016distinguished}. Compared with the case $H=\SL_2$,  the $L$-packets of $\Sp_4$ are much more complicated  and so the proof looks much more difficult. However the ideas are essentially the same.
\begin{proof}[Proof of Theorem \ref{sp(4)}]
Assume that $\tau$ is a tempered representation of $\Sp_4(E)$ with an enhanced $L$-parameter $(\phi_\tau,\lambda )$, distinguished by $\Sp_4(F)$. Suppose that $\tilde{\tau}$ is an irreducible tempered  $\GSp_4(F)$-distinguished representation of $\GSp_4(E)$ such that $\tilde{\tau}|_{\Sp_4(E)}\supset\tau$.
Let $$X_{\tilde{\tau}}=\{\chi:F^\times\longrightarrow\mathbb{C}^\times|\Hom_{\GSp_4(F)}(\tilde{\tau},\chi\circ\lambda_W )\neq0  \},$$ $Y_{\tilde{\tau}}=\{\chi:E^\times\longrightarrow\mathbb{C}^\times|\phi_{\tilde{\tau}}\otimes\chi=\phi_{\tilde{\tau}}  \}$
and $Z_{\tilde{\tau}}=\{\chi\in Y|\chi|_{F^\times}=\mathbf{1} \}$. Then $\phi_{\tilde{\tau}}^\vee=\phi_{\tilde{\tau}^\sigma}$ and
\[\frac{|Y_{\tilde{\tau}}|}{|Z_{\tilde{\tau}}|} \dim\Hom_{\Sp_4(F)}(\tau,\mathbb{C})=\sum_{\chi\in X_{\tilde{\tau}} }\dim\Hom_{\GSp_4(F)}(\tilde{\tau},\chi\circ\lambda_W).   \]
According to the Langlands parameter $\phi_{\tilde{\tau}} $,
the proof is divided as $3$  parts: 
\begin{itemize}
		\item  the Langlands parameter $\phi_{\tilde{\tau}} $ is irreducible (see \ref{A});
	\item the Langlands parameter $\phi_{\tilde{\tau}} =\rho\oplus\chi\rho$ with $\chi\neq\mathbf{1}$ is reducible (see \ref{B});
	\item the endoscopic case (see \ref{C}).
\end{itemize}
We will study the multiplicity $\dim\Hom_{\Sp_4(F)}(\tau,\mathbb{C}) $ case by case.
\begin{enumerate}[(A)]
	\item \label{A} If $\phi_{\tilde{\tau}}$ is irreducible, then there exists a bijection between the set $X_{\tilde{\tau}}$ and $Z_{\tilde{\tau}}$ and so
	\[\dim\Hom_{\Sp_4(F)}(\tau,\mathbb{C})=\frac{|X_{\tilde{\tau}}|}{|Y_{\tilde{\tau}}|/|Z_{\tilde{\tau}}| }=\frac{|Z_{\tilde{\tau}}|^2}{|Y_{\tilde{\tau}}|}.  \]
	\begin{enumerate}[label=(A\arabic*)]
		\item If $\phi_{\tilde{\tau}}$ is primitive, then $p=2$ and $ \phi_{\tau}^\vee\cong\phi_{\tau}\cong\phi_{\tau^\sigma}  $. Moreover, there exists $A\in\SO_5(\mathbb{C})$ such that $\phi_{\tau^\sigma}(t)=\phi_{\tau}^\ell(t)=\phi_{\tau}(\ell t\ell^{-1})=A\phi_{\tau}(t)A^{-1} $ for $t\in W_E$ and $\phi_{\tau}(\ell^2) =A^2$. Set $\phi(\ell)=A$ and $\phi(t)=\phi_{\tau}(t) $ for $t\in W_E$. Then $\phi\in F(\phi_{\tau} )$ and $F(\phi_{\tau} )=\{\phi\}$ is a singleton since $\phi_{\tau} $ is irreducible. 

In general,	if $|Y_{\tilde{\tau}}|=|Z_{\tilde{\tau}}|$, then
\[\dim\Hom_{\Sp_4(F)}(\tau,\mathbb{C})=|Z_{\tilde{\tau}}|.  \]
In this case,		 there exist  characters $$\nu_i:W_E\longrightarrow\mathbb{C}^\times \quad(i=1,2,\cdots,4)$$ such that $\nu_i^\sigma\nu_i^{-1}=sim(\phi_{\tilde{\tau}})\cdot\chi_i$ for $\chi_i\in Z_{\tilde{\tau}}$. Then $$(\phi_{\tilde{\tau}}\otimes\nu_i)^\ell\cong \phi_{\tilde{\tau}}^\vee\otimes\nu_i^\sigma\cong\phi_{\tilde{\tau}}\otimes\nu_i\chi_i\cong\phi_{\tilde{\tau}}\otimes\nu_i.$$
Here we identify the character $\nu_i^\ell$ of $W_E$ and $\mu_i^\sigma$ of $E^\times$ by the local class field theory.
		Thus for each $i$, there exists $A_i\in\GSp_4(\mathbb{C})$ such that $$\phi_{\tilde{\tau}}(\ell g\ell^{-1})\nu_i(\ell g\ell^{-1})=A\phi_{\tilde{\tau}}(g)A^{-1}\nu_i(g)$$ for $g\in WD_E$. There exists $\tilde{\phi}_i:WD_F\rightarrow\GSp_4(\mathbb{C})$ such that $$\tilde{\phi}_i|_{WD_E}=\phi_{\tilde{\tau}}\otimes\nu_i $$ for each $i$. Therefore
		\[F(\phi_{\tau})=\bigcup_{i=1}^{|Z_{\tilde{\tau}}|}\{p\circ \tilde{\phi}_i \}. \]

\item 
If $|Y_{\tilde{\tau}}|=2|Z_{\tilde{\tau}}|$, then $\dim\Hom_{\Sp_4(F)}(\tau,\mathbb{C})=\frac{1}{2}|Z_{\tilde{\tau}}| $.
In this case, $$Y_{\tilde{\tau}}\supset \langle\chi_E,\chi_E^\sigma,\chi_F\circ N_{E/F} \rangle$$ and $\chi_E\chi_E^\sigma=\chi_F\circ N_{E/F}=\chi_2\in Z_{\tilde{\tau}}$. Note that $\nu_1^\sigma\nu_1^{-1}=sim(\phi_{\tilde{\tau}} )$,
\[\phi_{\tilde{\tau}}\otimes\nu_2=\phi_{\tilde{\tau}}\otimes\chi_E\nu_2= \phi_{\tilde{\tau}}\otimes\nu_1\cdot\chi_E\nu_2\nu_1^{-1}   \]
and so $$(\chi_E\nu_2\nu_1^{-1})^\sigma=\chi_E^\sigma(\nu_2\nu_1^{-1})^\sigma=\chi_E^\sigma\cdot\chi_F\circ N_{E/F}\cdot\nu_2\nu_1^{-1}=\chi_E\nu_2\nu_1^{-1}.$$
Therefore
 $p\circ\tilde{\phi}_1=p\circ\tilde{\phi}_2=\phi$. There are $|Y_{\tilde{\tau}}|$ elements inside the $L$-packet $\Pi_{\phi_{\tau}}$ and only half of them are $\Sp_4(F)$-distinguished. When $|Y_{\tilde{\tau}}|=4$, the component group
\[S_{\phi}=\mu_2\hookrightarrow S_{\phi_{\tau}}=\mu_2\times\mu_2 \]
is the diagonal embedding.

\item If $|Y_{\tilde{\tau}}|=4|Z_{\tilde{\tau}}|$, then $Y_{\tilde{\tau}}=\langle\chi_E,\chi_E^\sigma,\chi_E',(\chi_E')^\sigma \rangle$ and $\chi_E'|_{F^\times}\neq\chi_E|_{F^\times}$. In this case, $$\dim\Hom_{\Sp_4(F)}(\tau,\mathbb{C})=1 $$
and $|F(\phi_\tau )|=1$. Moreover, $p=2$, $|Y_{\tilde{\tau}}|=16$ and $|Z_{\tilde{\tau}}|=4$. On the component group, $$S_\phi=\mu_2\times\mu_2\rightarrow S_{\phi_\tau}$$ is given by $(x,y)\mapsto (x,x,y,y)$. There are $4$ characters of $S_{\phi_{\tau} }$ restricted to $S_\phi$ containing the trivial character and so there are $4$ representations in $\Pi_{\phi_\tau}$ distinguished by $\Sp_4(F)$. 
	\end{enumerate}


	\item\label{B} If $\phi_{\tilde{\tau}}=\rho\oplus\rho\chi$ with similitude character $sim(\phi_{\tilde{\tau}} )=\chi\cdot\det\rho$ and $\chi\neq\mathbf{1}$, then $\phi_{\tau}=\chi\oplus\chi^{-1}\oplus Ad(\rho)$. There are several subcases. 
	\begin{enumerate}[label=(B\arabic*)]
		\item 
	If $\chi^2=\mathbf{1}$ and $\rho$ is irreducible, then $\phi_\tau=2\chi\oplus Ad(\rho) $.
		
		\noindent(a)
		 If $\rho$ is conjugate-orthogonal and irreducible and so $\chi|_{F^\times}=\mathbf{1}$, then $\chi=\chi^\sigma=\chi_F\circ N_{E/F}$.
		\begin{enumerate}[label=(a\arabic*)]
	\item	 If $\rho=\rho\chi$, then $Ad(\rho):WD_E\longrightarrow\SO_3(\mathbb{C})$ is reducible and
		 \[\dim\Hom_{\Sp_4(F)}(\tau,\mathbb{C})=\begin{cases}2,&\mbox{ if }|Y_{\tilde{\tau}}|=4,|Z_{\tilde{\tau}}|=2;\\
		 2\cdot|Z_{\tilde{\tau}}|,&\mbox{ if }|Y_{\tilde{\tau}}|=|Z_{\tilde{\tau}}|.
		 \end{cases}  \]
		 \begin{itemize}
		 	\item 
		 When $\dim\Hom_{\Sp_4(F)}(\tau,\mathbb{C})=2 $, there exists a unique lift for $Ad(\rho)$. In fact, $\phi=2\chi_F\oplus Ad(\tilde{\rho})$ due to \cite[Theorem 1.2]{L2018pacific}, where $\tilde{\rho}$ is an $L$-parameter of $\GL_2(F)$ and $Ad(\tilde{\rho})|_{WD_E}=Ad(\rho)$. So \[F(\phi_{\tau})=\{2\chi_F\oplus Ad(\tilde{\rho}), Ad(\tilde{\rho})\oplus2\chi_F\omega_{E/F} \}. \]
		  Half members inside the $L$-packet $\Pi_{\phi_\tau}$ are $\Sp_4(F)$-distinguished.
		  \item 
		  		  When $\dim\Hom_{\Sp_4(F)}(\tau,\mathbb{C})=4 $, there are $2$ lifts for $Ad(\rho)$ and so there are $4$ lifts for $\phi_{\tau}$, i.e. $|F(\phi_{\tau})|=4$. Only one member side the $L$-packet $\Pi_{\phi_{\tau}}$ is $\Sp_4(F)$-distinguished.
		  \item 		  
		  		   When $\dim\Hom_{\Sp_4(F)}(\tau,\mathbb{C}) =8$, there are $4$ lifts of $Ad(\rho)$ and so $|F(\phi_\tau )|=8$.
		 	 \end{itemize}
	\item	If $\rho\neq\rho\chi$ and so $\rho^\ell\neq(\rho\chi)^\vee$, then $|Y_{\tilde{\tau}}|=|Z_{\tilde{\tau}}|=2$ if $\rho$ is primitive. Moreover, $$X_{\tilde{\tau}}=\{\mathbf{1},\chi_F,\chi_F\omega_{E/F} \}$$ and
		$\dim\Hom_{\Sp_4(F)}(\tau,\mathbb{C})=3 $. There are $3$ parameter lifts for $\phi_\tau $, i.e.
		\[F(\phi_\tau )=\{2\chi_F\oplus Ad(\tilde{\rho}),2\chi_F\omega_{E/F}\oplus Ad(\tilde{\rho}),\chi_F\oplus\chi_F\omega_{E/F}\oplus\omega_{E/F} Ad(\tilde{\rho}) \}. \]
		 If $\rho$ is dihedral with respect to one nontrivial quadratic character $\chi_E$, then $$\chi_E^\sigma=\chi_E=\chi_F'\circ N_{E/F}$$ with $(\chi_F')^2=\mathbf{1}$ and 
		$\dim\Hom_{\Sp_4(F)}(\tau,\mathbb{C} )= 6 $,
		which only happens when $p=2$. On the Galois side, there are two lifts $\phi_0$ and $\omega_{E/F}\phi_0$ with determinant $\chi_F'$ such that $$\phi_0|_{W_E}\oplus\chi_E=Ad(\rho).$$ 
		It is obvious that $X_{\tilde{\tau}}=\{\mathbf{1},\chi_F,\chi_F',\chi_F\chi_F',\chi_F\omega_{E/F},\chi_F'\chi_F\omega_{E/F} \}$ and
		\[F(\phi_{\tau} )=\bigcup_{z_0,z,z'\in\{0,1\}} \{\chi_F\oplus\chi_F\omega_{E/F}\oplus\omega_{E/F}\chi_F'\oplus\omega_{E/F}^{z_0}\phi_0, 2\chi_F\omega_{E/F}^z\oplus\chi_F'\oplus \omega_{E/F}^{z'}\phi_0 \}. \]
		
	If $\rho$ is dihedral with respect to three nontrivial quadratic characters, then $|Y_{\tilde{\tau}}|=8$ and 
		\[\dim\Hom_{\Sp_4(F)}(\tau,\mathbb{C})=\begin{cases}
		12,&\mbox{ if }Z_{\tilde{\tau}}=Y_{\tilde{\tau}};\\
		3,&\mbox{ otherwise}.
		\end{cases}  \]
		If $Z_{\tilde{\tau}}=Y_{\tilde{\tau}}$, then 
		\[F(\phi_{\tau})=\bigcup_{i=1}^4\bigcup_{z_0\in\{0,1\} } \{\chi_F\oplus\chi_F\omega_{E/F}\oplus\omega_{E/F}Ad(\tilde{\rho}_i),2\chi_F\omega_{E/F}^{z_0}\oplus Ad(\tilde{\rho}_i)\} \]
		where $Ad(\tilde{\rho}_i):WD_F\longrightarrow\SO_3(\mathbb{C})$ are four parameter lifts for $Ad(\rho)$.
	\end{enumerate}
	
		\noindent(b) If $(\rho\chi)^\vee=\rho^\ell$ is irreducible, then $\chi^\sigma=\chi$. There exists a character $\nu:W_E\longrightarrow\mathbb{C}^\times$ such that $\nu^\sigma\nu=\chi$ and so $(\rho\otimes\nu)^\ell=(\rho\otimes\nu)^\vee$. Then
		\[\dim\Hom_{\Sp_4(F)}(\tilde{\tau},\mathbb{C})=\dim\Hom_{\Sp_4(F)}(\tilde{\tau}\otimes\nu,\mathbb{C})   \]
		with $\rho\otimes\nu$ conjugate-self-dual, which has been discussed before if $\chi|_{F^\times}=\mathbf{1}$.
		
		 Suppose that $\chi|_{F^\times}\neq\mathbf{1}$. Then $\chi=\chi_F\circ N_{E/F}$ with $\chi_F^2=\omega_{E/F}$  and
		 	\[\dim\Hom_{\Sp_4(F)}(\tau,\mathbb{C})=\begin{cases}
		 |Z_{\tilde{\tau}}|,&\mbox{ if }|Y_{\tilde{\tau}}|=2\cdot|Z_{\tilde{\tau}}|;\\
		 1,&\mbox{ if }|Y_{\tilde{\tau}}|=8,|Z_{\tilde{\tau}}|=2.\\
	 \end{cases}  \]
		 If $|Y_{\tilde{\tau}}|=2$, there exists only one lift $$Ad(\tilde{\rho}):WD_F\longrightarrow\SO_3(\mathbb{C})$$ satisfying $Ad(\tilde{\rho})|_{WD_E}=Ad(\rho)$. So $F(\phi_\tau )$ is a singleton, i.e.
		 \[F(\phi_\tau )=\{\phi=\chi_F\oplus Ad(\tilde{\rho})\oplus \chi_F\omega_{E/F} \} \]
		 and the rest cases are similar. Note that $\deg\Phi(\phi)=2=d_0(\phi)$ in these cases.

\item 
If $\chi^2\neq\mathbf{1}$ and $\rho$ is irreducible, then $\phi_{\tau}=\chi\oplus Ad(\rho)\oplus\chi^{-1} $.

	\noindent(c) If $\rho$ is conjugate-orthogonal, then $\chi|_{F^\times}=\mathbf{1}$ and so $\chi\neq\chi^\sigma$. The parameter $\phi_\tau=\chi\oplus\chi^\sigma\oplus Ad(\rho) $ and
	\[\dim\Hom_{\Sp_4(F)}(\tau,\mathbb{C})=\begin{cases}
	|Z_{\tilde{\tau}}|,&\mbox{ if }|Y_{\tilde{\tau}}|=|Z_{\tilde{\tau}}|;\\
	1,&\mbox{ if }|Y_{\tilde{\tau}}|=4,|Z_{\tilde{\tau}}|=2.\\
	\end{cases}  \]
	Thanks to \cite[Theorem 1.2]{L2018pacific}, there are $r$ distinct parameters $Ad(\tilde{\rho}_i):WD_F\rightarrow\SO_3(\mathbb{C})$ such that $Ad(\tilde{\rho}_i)=Ad(\rho)$ where $r=\dim\Hom_{\Sp_4(F)}(\tau,\mathbb{C}) $. Then
	\[F(\phi_{\tau} )=\bigcup_{i=1}^r\{Ind_{WD_E}^{WD_F}\chi\oplus \omega_{E/F}Ad(\tilde{\rho}_i) \}. \]
	
	\noindent(d)
	 If $\rho^\ell=(\rho\chi)^\vee$, then $\chi=\chi^\sigma=\chi_F\circ N_{E/F}$. There exists a character $\nu$ of $E^\times$ such that $\chi=\nu^\sigma\nu$ and $(\rho\otimes\nu)^\ell=(\rho\otimes\nu)^\vee$. Then
	if $|Y_{\tilde{\tau}}|=2$, we have $Z_{\tilde{\tau}}=Y_{\tilde{\tau}}$. Thus
	\[\dim\Hom_{\Sp_4(F)}(\tau,\mathbb{C})=\begin{cases}
	2\cdot |Z_{\tilde{\tau}}|,&\mbox{ if }|Y_{\tilde{\tau}}|=|Z_{\tilde{\tau}}|;\\
	2,&\mbox{ if }|Y_{\tilde{\tau}}|=4,|Z_{\tilde{\tau}}|=2.\\
	\end{cases}   \]
	On the Galois side, $\chi_F^2\neq\omega_{E/F}$ and
	 $$F(\phi_\tau )=\bigcup_{i=1}^r\bigcup_{z\in\{0,1\} }\{\omega_{E/F}^{z}(\chi_F\oplus\chi_F^{-1})\oplus Ad(\tilde{\rho}_i) \}$$ where $r=1,2$ or $4$ depending on $\rho$.
%

\item 
If $\chi=\mathbf{1}$ or the parameter $\rho$ is reducible, then it belongs to the endoscopic case.
	\end{enumerate}

	\item\label{C} If $\phi_{\tilde{\tau}}=\phi_1\oplus\phi_2$ with similitude character $\det\phi_1=\det\phi_2$ (called the endoscopic case), then there are several subcases. 
\begin{enumerate}[label=(C\arabic*)]
	\item 
	Suppose that $\phi_1:WD_E\longrightarrow\GL_2(\mathbb{C})$ is irreducible.
	
		\noindent(a). If $\phi_1=\phi_2$ is irreducible and conjugate-orthogonal, then
		\[\dim\Hom_{\Sp_4(F)}(\tau,\mathbb{C}) =\begin{cases}
		3\cdot|Z_{\tilde{\tau}}|,&\mbox{ if }|Y_{\tilde{\tau}}|=|Z_{\tilde{\tau}}|;\\
		3,&\mbox{ if }|Y_{\tilde{\tau}}|=4,|Z_{\tilde{\tau}}|=2.
		\end{cases}  \] 
		On the Galois side, $\phi_\tau=2\mathbb{C}+Ad(\phi_1) $ and
		\[F(\phi_{\tau})=\bigcup_{i=1}^r\{2\omega_{E/F}\oplus Ad(\tilde{\rho}_i),2\mathbb{C}\oplus Ad(\tilde{\phi}_i),\omega_{E/F}\oplus\mathbb{C}\oplus \omega_{E/F} Ad(\tilde{\rho}_i)\} \]
		where $r=1,2$ or $4$ and each $Ad(\tilde{\rho}_i)$ satisfies $Ad(\tilde{\rho}_i)|_{WD_E}=Ad(\phi_1)$.
		
		\noindent(b).
		 If $\phi_1=\phi_2\otimes\chi\neq\phi_2$ for a quadratic character $\chi$,  then
		\[\phi_{\tau}=\mathbb{C}\oplus\chi\oplus \chi Ad(\phi_1). \]
		\begin{itemize}
			\item If both $\phi_1$ and $\phi_2$ are conjugate-orthogonal, then $\chi|_{F^\times}=\mathbf{1}$ and $\chi=\chi^\sigma=\chi_F\circ N_{E/F}$. So
		\[\dim\Hom_{\Sp_4(F)}(\tau,\mathbb{C})	=\begin{cases}
				2\cdot|Z_{\tilde{\tau}}|,&\mbox{ if }|Y_{\tilde{\tau}}|=|Z_{\tilde{\tau}}|;\\
						4,&\mbox{ if }|Y_{\tilde{\tau}}|=8,|Z_{\tilde{\tau}}|=4.
			\end{cases} \]
			If $|Y_{\tilde{\tau}}|=2$, then $X_{\tilde{\tau}}=\{\mathbf{1},\chi_F,\chi_F\omega_{E/F} \}$. Moreover,
			$\dim\Hom_{\GSp_4(F)}(\tilde{\tau},\mathbb{C})=2$,
			\[\dim\Hom_{\GSp_4(F)}(\tilde{\tau},\chi_F)=1=\dim\Hom_{\GSp_4(F)}(\tilde{\tau},\chi_F\omega_{E/F}),    \]
			in which case, $|F(\phi_\tau )|=4$ and the lifted parameters are given by
			 $$\chi_F\oplus\omega_{E/F}\oplus\chi_F\omega_{E/F} Ad(\tilde{\rho}),\omega_{E/F}\oplus\omega_{E/F}\chi_F+\chi_F Ad(\tilde{\rho})\mbox{ and }\mathbb{C}\oplus \omega_{E/F}^z\chi_F(\mathbb{C}\oplus Ad(\tilde{\rho}))$$ for $z\in\{0,1\}$, 
			where $Ad(\tilde{\rho})|_{WD_E}=Ad(\phi_1)$. The rest cases are similar.
			\item If $\phi_1^\ell=\phi_2^\vee$ and $\chi|_{F^\times}=\mathbf{1}$, then 
			$\phi_2^\ell\otimes\chi=\phi_2^\vee$. There exists a character $\nu$ such that $\chi=\nu\nu^\sigma$ and $(\phi_2\otimes\nu)^\ell=(\phi_2\otimes\nu)^\vee$. Then $$\dim\Hom_{\Sp_4(F)}(\tilde{\tau},\mathbb{C})=\dim\Hom_{\Sp_4(F)}(\tilde{\tau}\otimes\nu,\mathbb{C})  $$
			with $\phi_2\otimes\nu$ conjugate-self-dual, which has been discussed as before.
			\item If $\phi_1^\ell=\phi_2^\vee$ and $\chi|_{F^\times}\neq\mathbf{1}$, then
			$\phi_1^\ell=\phi_1^\vee\otimes\chi$ and so $\phi_1\otimes\chi=\phi_1\otimes\chi^\sigma$.
			 Suppose that $\chi|_{F^\times}=\omega_{E/F}$. Then $\chi=\chi^\sigma$. There exists a character $\nu$ such that $\chi=\nu\nu^\sigma$ and $$(\phi_1\otimes\nu)^\ell=(\phi_1\otimes\nu)^\vee.$$ So the parameter $\phi_1\otimes\nu:WD_E\longrightarrow\GL_2(\mathbb{C})$ is conjugate-self-dual. Then $\nu^2\cdot\det\phi_1$ is conjugate-orthogonal. Note that $\det\phi_1$ is conjugate-orthogonal. Then $\nu^2|_{F^\times}=\mathbf{1}$ and so $\chi|_{F^\times}=\mathbf{1}$, which contradicts $\chi|_{F^\times}=\omega_{E/F}$.
			%
			%
			
		Thus $\chi|_{F^\times}\neq\omega_{E/F}$. Then $\phi_1=\phi_1\otimes\chi\chi^\sigma$ and
			\[\dim\Hom_{\Sp_4(F)}(\tau,\mathbb{C})	=\begin{cases}
		|Z_{\tilde{\tau}}|,&\mbox{ if }|Y_{\tilde{\tau}}|=2\cdot|Z_{\tilde{\tau}}|;\\
		1,&\mbox{ if }|Y_{\tilde{\tau}}|=8,|Z_{\tilde{\tau}}|=2..
		\end{cases} \]
		On the Galois side, if $|Y_{\tilde{\tau}}|=2|Z_{\tilde{\tau}}|$, then
		\[F(\phi_\tau )=\{\omega_{E/F}\oplus\nu_i As_{E/F}(\phi_1),\nu_i\circ N_{E/F}\in Z_{\tilde{\tau}} \} .\]
		If $\dim\Hom_{\Sp_4(F)}(\pi,\mathbb{C}) =1$, then $F(\phi_\tau )=\{\omega_{E/F}\oplus As_{E/F}(\phi_1) \}$. Note that $$\omega_{E/F}\otimes As_{E/F}(\phi_1)=As_{E/F}(\phi_1)$$ in these cases. 
		\end{itemize}
		
		\noindent(c).
		 If $\phi_1\neq\phi_2\otimes\chi$ for any character $\chi$ of $W_E$, then $\phi_\tau=\mathbb{C}\oplus \phi_1^\vee\otimes\phi_2 $.
		\begin{itemize}
			\item If both $\phi_1$ and $\phi_2$ are conjugate-orthogonal and irreducible, then $Y_{\tilde{\tau}}=Z_{\tilde{\tau}}$ and $$\dim\Hom_{\Sp_4(F)}(\tau,\mathbb{C}) =\begin{cases}
			4,&\mbox{ if }|Y_{\tilde{\tau}}|=2;\\
			2,&\mbox{ if }|Y_{\tilde{\tau}}|=1.
			\end{cases}$$
			Suppose that $\det\phi_1=\nu^\sigma\nu^{-1}$, then there exists $\tilde{\rho}_i:WD_F\rightarrow\GL_2(\mathbb{C})$ such that $$\tilde{\rho}_i|_{WD_E}=\phi_i\otimes\nu$$ and $\det\tilde{\rho}_i=\omega_{E/F}\cdot\nu|_{F^\times}$. Then
			$$ F(\phi_\tau  )=\bigcup_{z\in\{0,1\} }\{\mathbb{C}\oplus\omega_{E/F}^z \nu_i \tilde{\rho}_1^\vee\otimes\tilde{\rho}_2,\nu_i\circ N_{E/F}\in Z_{\tilde{\tau} } \}.$$ 
			\item If $\phi_1^\ell=\phi_2^\vee$, then $\dim\Hom_{\Sp_4(F)}(\tau,\mathbb{C})=2 $ if $Y_{\tilde{\tau}}$ is a singleton. On the Galois side, $$F(\phi_\tau )=\{\omega_{E/F}\oplus As_{E/F}(\phi_1),\omega_{E/F}\oplus\omega_{E/F} As_{E/F}(\phi_1) \}. $$
			Moreover, $|\Pi_{\phi_{\tau}}|=2$ and $S_{\phi}=\mu_2\cong S_{\phi_{\tau} }$. Only one member inside the $L$-packet $\Pi_{\phi_{\tau}}$ is $\Sp_4(F)$-distinguished,
			the other one is nongeneric and so it is not $\Sp_4(F)$-distinguished.

			If $Y_{\tilde{\tau}}=\langle\chi_E\rangle$, then $\chi_E^\sigma\in Y_{\tilde{\tau}}$ and so $\chi_E^\sigma=\chi_E$. Then
				\[\dim\Hom_{\Sp_4(F)}(\tau,\mathbb{C})	=\begin{cases}
			4,&\mbox{ if }\chi_E|_{F^\times}=\mathbf{1};\\
			1,&\mbox{ if }\chi_E|_{F^\times}=\omega_{E/F}.\\
			\end{cases} \]
			On the Galois side, $F(\phi_{\tau})$ is given by
			 $$\begin{cases}
			\bigcup_{z\in\{0,1\}}	\{\omega_{E/F}\oplus\omega_{E/F}^z \nu_i As_{E/F}(\phi_1),\nu_i\circ N_{E/F}\in Z_{\tilde{\tau}} \},&\mbox{ if }\chi_E|_{F^\times}=\mathbf{1};\\
				\{\omega_{E/F}\oplus As_{E/F}(\phi_1) \},&\mbox{ if }\chi_E|_{F^\times}=\omega_{E/F}.
			\end{cases}
		$$ 
		When $\chi_E|_{F^\times}=\omega_{E/F}$, $S_\phi=\mu_2\hookrightarrow S_{\phi_{\tau}}=\mu_2\times\mu_2$ is the diagonal embedding. There are two representations in $\Pi_{\phi_{\tau}}$ distinguished by $\Sp_4(F)$ and the rest two representations correspond to the nongeneric tempered representations of $\Sp_4(E)$, which are not $\Sp_4(F)$-distinguished.
			\item If $\phi_2=\mathbb{C}\oplus\det\phi_1$, then $\phi_1:WD_E\longrightarrow\GL_2(\mathbb{C})$ is conjugate-orthogonal.
			
			If $\det\phi_1=\mathbf{1}$ or $\det\phi_1$ is not a quadratic character, then $$\dim\Hom_{\Sp_4(F)}(\tau,\mathbb{C}) =1.
		$$
			 If $\det\phi_1$ is a nontrivial quadratic character, then
			\[\dim\Hom_{\Sp_4(F)}(\tau,\mathbb{C})=\begin{cases}
			1,&\mbox{ if } |Y_{\tilde{\tau}}|=1;\\
			3,&\mbox{ if }|Y_{\tilde{\tau}}|=2.
			\end{cases}  \]
			On the Galois side, there exists a character $\nu$ such that $\det\phi_1=\nu^\sigma\nu^{-1}$. So $$(\phi_1\otimes\nu)^\ell=\phi_1^\vee\otimes\nu^\sigma=\phi_1\otimes\nu.$$  There exist $\tilde{\rho}_1$ and $\tilde{\rho}_2$ such that $\tilde{\rho}_i|_{WD_E}=\phi_i\otimes\nu$ and $\det\tilde{\rho}_i=\omega_{E/F}\nu|_{F^\times}$. Then
			\[F(\phi_\tau )=\{\mathbb{C}\oplus \tilde{\rho}_1^\vee\otimes\tilde{\rho}_2\} \]
			if $|Y_{\tilde{\tau}}|=1$. If $Y_{\tilde{\tau}}=\langle\det\phi_1\rangle $, then $\phi_1^\ell=\phi_1^\vee=\phi_1$. There exists a parameter $\tilde{\rho}_3$ such that $\tilde{\rho}_3|_{WD_E}=\phi_1$. Therefore
			\[F(\phi_\tau )= \{\mathbb{C}\oplus \tilde{\rho}_1^\vee\otimes
			\tilde{\rho}_2,\mathbb{C}\oplus(\tilde{\rho}_3\oplus\tilde{\rho}_3^\vee),\mathbb{C}\oplus(\tilde{\rho}_3\oplus\tilde{\rho}_3^\vee)\omega_{E/F} \}. \]
		\end{itemize}
\item 
Suppose that $\phi_1=\chi\chi_1\oplus\chi\chi_2$ is reducible and $\phi_{\tilde{\tau}}=\chi(\chi_1\chi_2\oplus\chi_1\oplus\chi_2\oplus\mathbb{C})$. Then $$\phi_\tau=\chi_1\oplus\chi_2\oplus\mathbb{C}\oplus\chi_2^{-1}\oplus\chi_1^{-1}\mbox{ with }(\chi^2\chi_1\chi_2)|_{F^\times}=\mathbf{1} .$$
	
	\noindent(d). 
If $\chi^\sigma\chi\chi_1\chi_2=\mathbf{1}=\chi^\sigma\chi_1^\sigma\chi\chi_2$, then $\chi^\sigma_1=\chi_1$ and $\chi_2^\sigma=\chi_2$. 

\noindent(d1) Assume $\chi_1\neq\chi_2^{\pm1}$. Then
\begin{itemize}
\item
	 If $|Y_{\tilde{\tau}} |=|Z_{\tilde{\tau}}|=r$ where $r=1,2$ or $4$, then $$\dim\Hom_{\Sp_4(F)}(\tau,\mathbb{C})=\begin{cases}
	4,&\mbox{ if }r=1;\\6,&\mbox{ if }r=2;\\ 8,&\mbox{ if }r=4.
	\end{cases}$$ On the Galois side, there exist two characters $\chi_F$ and $\chi_F'$ of $F^\times$ such that $\chi_1=\chi_F\circ N_{E/F}$ and $\chi_2=\chi_F'\circ N_{E/F}$. Moreover, $\chi_F^2\neq\omega_{E/F}$ and $\chi_F'^2\neq\omega_{E/F}$. So 
	if $r=1$,
	\[F(\phi_{\tau})=\bigcup_{z_1,z_2\in\{0,1\}}\{\chi_F\omega_{E/F}^{z_1}\oplus\chi_F'\omega_{E/F}^{z_2}\oplus\mathbb{C}\oplus(\chi_F\omega_{E/F}^{z_1})^{-1}\oplus(\chi_F'\omega_{E/F}^{z_2})^{-1} \} .\]
	If $r=2$, then the extra $2$ lifted parameters are given by
	\[\chi_F\omega_{E/F}^z\oplus \chi_F'\oplus\omega_{E/F}\oplus\chi_F'\omega_{E/F}\oplus \chi_F^{-1}\omega_{E/F}^z,z\in\{0,1 \} \]
	with $(\chi_F')^2=\mathbf{1}$. If $r=4$, another $2$ parameter lifts are given by
	\[ \chi_F'\omega_{E/F}^z\oplus \chi_F\oplus\omega_{E/F}\oplus\chi_F\omega_{E/F}\oplus \chi_F'\omega_{E/F}^z,z\in\{0,1\}. \]
	\item If $|Y_{\tilde{\tau}}|=4$ and $Z_{\tilde{\tau}}=\langle\chi_1\rangle $,  then $\dim\Hom_{\Sp_4(F)}(\tau,\mathbb{C})=3$. On the Galois side, $\chi_F^2=\mathbf{1}$ and $(\chi_F')^2=\omega_{E/F}$. There are $3$  elements in $F(\phi_{\tau})$. For each $\tilde{\phi}_i\in F(\phi_{\tau})$, one has $\deg\Phi(\tilde{\phi}_i)=2$ and $d_0(\tilde{\phi}_i)=2$.
	\item If $|Y_{\tilde{\tau}}|=2$ and $|Z_{\tilde{\tau}}|=1$, then $\dim\Hom_{\Sp_4(F)}(\tau,\mathbb{C})=2$ and $|F(\phi_\tau  )|=2$.
\end{itemize}
\noindent(d2) If $\chi_1=\chi_2=\chi_F\circ N_{E/F}$, then 
\[\dim\Hom_{\Sp_4(F)}(\tau,\mathbb{C})=\begin{cases}
6,&\mbox{ if }\chi_F^2=\mathbf{1};\\
2,&\mbox{ if }\chi_F^2=\omega_{E/F};\\
4,&\mbox{ otherwise. }
\end{cases}  \]
If  $\chi_F^2=\mathbf{1}$, then $Y_{\tilde{\tau}}=Z_{\tilde{\tau}}=\langle\chi_1\rangle$ and $S_{\phi_\tau }=\mu_2$. There are $5$ elements in $F(\phi_{\tau})$ and at the point $$\phi_1=2\chi_F\oplus 2\chi_F\omega_{E/F}\oplus\mathbb{C},$$
we have $\deg \Phi(\phi_1)=2$. The rest $4$ parameters in $F(\phi_\tau )$
are of degree $1$ and are given by $4\chi_F\omega_{E/F}^{z}\oplus\mathbb{C}$ and $2\chi_F\omega_{E/F}^{z'}\oplus\chi_F\oplus\chi_F\omega_{E/F}\oplus\omega_{E/F}$ with $z,z'\in\{ 0,1\}$.
\par 
If $|Y_{\tilde{\tau}}|=\langle\chi_1\rangle$ and $|Z_{\tilde{\tau}}|=1$, then $F(\phi_\tau )=\{\phi=2\chi_F\oplus 2\chi_F^{-1}\oplus\mathbb{C} \}$ is a singleton, where $\chi_F^2=\omega_{E/F}$. There is a group embedding 
\[S_{\phi}=\mathbf{1}\hookrightarrow S_{\phi_\tau }=\mu_2. \]
 The degree $\deg\Phi(\phi)=4$ and $d_0(\phi)=2$.
\par
If $|Y_{\tilde{\tau}}|=1$, then $\dim\Hom_{\Sp_4(F)}(\tau,\mathbb{C}) =4$ and
$$F(\phi_\tau )=\{\phi=\chi_F\oplus\chi_F\omega_{E/F}\oplus\mathbb{C}\oplus\chi_F^{-1}\omega_{E/F}\oplus\chi_F^{-1}\}\cup\bigcup_{z\in\{0,1\}} \{2 \chi_F\omega_{E/F}^{z}\oplus\mathbb{C} \oplus2\chi_F^{-1}\omega_{E/F}^{z} \}$$ with $\deg\Phi (\phi)=2$ and the other two parameters are of degree $1$.

\noindent(e).
If $\chi^\sigma\chi_1^\sigma\chi\chi_2=\mathbf{1}=\chi|_{F^\times}$ and $\chi_1\chi_2\neq\mathbf{1}$, then $\chi_1^\sigma=\chi_2^{-1}\neq\chi_1$.
\begin{itemize}
	\item  If $\chi_1^2=\mathbf{1}$, then $|Y_{\tilde{\tau}}|=4$ and $|Z_{\tilde{\tau}}|=2$. So
	$X_{\tilde{\tau}}=\{\mathbf{1},\chi_1|_{F^\times} \}$ and $$\dim\Hom_{\Sp_4(F)}(\tau,\mathbb{C})=\frac{2}{4/2}=1. $$
	On the Galois side, set $\rho=Ind_{W_E}^{W_F}\chi_1$. Then
	\[F(\phi_\tau )=\{\rho\oplus\mathbb{C}\oplus\rho^\vee\}. \]
	\item If $\chi_1^2\neq\mathbf{1}$, then $\chi_2^2\neq\mathbf{1}$. If $\chi_1\chi_2$ is a quadratic character, then $\chi_1\chi_2=\chi_F\circ N_{E/F}$ with $\chi_F^2=\mathbf{1}$. So $Y_{\tilde{\tau}}=Z_{\tilde{\tau}}$ and $\dim\Hom_{\Sp_4(F)}(\tau,\mathbb{C})=1$. 
	Suppose that $\chi_1\chi_2$ is not  a quadratic character. Then $|Y_{\tilde{\tau}}|=1$ and
	\[\dim\Hom_{\Sp_4(F)}(\tau,\mathbb{C})=1.  \]
\end{itemize}
	On the Galois side,
\[F(\phi_{\tau})=\{\mathbb{C}\oplus \tilde{\rho}_1^\vee\otimes\tilde{\rho}_2\} \]
where $\tilde{\rho}_i|_{WD_E}=\phi_i\otimes\nu$ with $\det\tilde{\rho}_i=\omega_{E/F}\cdot \nu|_{F^\times}$ where $\det\phi_1=\nu^\sigma\nu^{-1}$.

\noindent(f).
If $\chi|_{F^\times}=\mathbf{1}=\chi_i|_{F^\times}$, $\chi_1\neq\chi_2^{\pm1
}$, then $Y_{\tilde{\tau}}=Z_{\tilde{\tau}}$ and $$\dim\Hom_{\Sp_4(F)}(\tau,\mathbb{C} )=\begin{cases}
8,&\mbox{ if }|Y_{\tilde{\tau}}|=4;\\
3,&\mbox{ if }|Y_{\tilde{\tau}}|=2;\\
1,&\mbox{ if }|Y_{\tilde{\tau}}|=1.
\end{cases} $$
On the Galois side, if $|Y_{\tilde{\tau}}|=4$, there are $6$ elements in $X_{\tilde{\tau}}$ which coincides with the case (d1). 

If $Y_{\tilde{\tau}}=\langle\chi_1\rangle=\langle\chi_F\circ N_{E/F}\rangle$, then
\[F(\phi_{\tau} )=\{\chi_F\oplus\omega_{E/F}\oplus\chi_F\omega_{E/F}\oplus Ind_{W_E}^{W_F}\chi_2\}\cup\bigcup_{z\in\{0,1\} }\{  Ind_{W_E}^{W_F}\chi_2\oplus2\chi_F\omega_{E/F}^z\oplus\mathbb{C} \} \]
with $\chi_F^2=\mathbf{1}$.

\noindent(g).
If $\phi_1^\ell=\phi_2^\vee\neq\phi_1^\vee$, then $\chi_1^\sigma=\chi_1=(\chi\chi^\sigma)^{-1}$ and $\chi_2^\sigma\chi_2=\mathbf{1},\chi_2\neq\chi_2^\sigma$.  So $\chi_2|_{F^\times}=\mathbf{1}$.
\begin{itemize}
	\item If $Y_{\tilde{\tau}}=Z_{\tilde{\tau}}$, then
	$\dim\Hom_{\Sp_4(F)}(\tau,\mathbb{C})= 2$. On the Galois side, $\chi_1=\chi_F\circ N_{E/F}$ and
	\[F(\phi_{\tau})=\bigcup_{z\in\{0,1\} }\{\chi_F\omega_{E/F}^z\oplus\chi_F^{-1}\omega_{E/F}^z\oplus\omega_{E/F}\oplus Ind_{W_E}^{W_F}\chi_2  \}. \]
	\item If $|Y_{\tilde{\tau}} |=2|Z_{\tilde{\tau}}|$, then $\dim\Hom_{\Sp_4(F)}(\tau,\mathbb{C})=1$. In this case, $\chi_1=\chi_F\circ N_{E/F}$ with $\chi_F^2=\omega_{E/F}$ and $F(\phi_\tau )=\{\chi_F\oplus\chi_F\omega_{E/F}\oplus\omega_{E/F}\oplus Ind_{W_E}^{W_F}\chi_2 \}$. 
\end{itemize}

\noindent(h).
 If $\chi_1=\mathbf{1}=\chi_2|_{F^\times}$ and $\chi|_{F^\times}=\omega_{E/F}$, then $\phi_\tau $ coincides with the case (d2) if $\chi_2=\mathbf{1}$, or (f) if $\chi_2\neq\mathbf{1}$.
\end{enumerate}	
\end{enumerate}
Then we have finished the proof of Theorem \ref{sp(4)}. 
\end{proof}

\begin{rem}
	Due to \cite[Theorem 4.2]{L2018pacific}, the non-generic tempered representation $\tau$ of $\Sp_4(E)$ can never be $\Sp_4(F)$-distinguished. If the representation $\tau$ is square-integrable and the $L$-packet $\Pi_{\phi_\tau}$
is a singleton, then the Prasad conjecture for $\Sp_4$ holds, which has been shown in \cite{L2018pacific} using the local theta correspondence. 
\end{rem}
\bibliographystyle{amsalpha}
\bibliography{SO(4)}
\end{document}